\DeclareMathAlphabet{\mathpzc}{OT1}{pzc}{m}{it}    
\theoremstyle{plain}
\newtheorem{theorem}{Theorem}[section]
\newtheorem{corollary}[theorem]{Corollary}
\newtheorem{proposition}[theorem]{Proposition}
\newtheorem{lemma}[theorem]{Lemma}
\newtheorem*{theorem*}{Theorem}
\newtheorem*{proposition*}{Proposition}
\theoremstyle{remark}
\newtheorem{remark}[theorem]{Remark}
\newtheorem{notation}[theorem]{Notation}
\newtheorem{discussion}[theorem]{Discussion}
\theoremstyle{definition}
\newtheorem{definition}[theorem]{Definition}
\numberwithin{equation}{theorem}
\DeclareFontFamily{U}{mathx}{}                                         
\DeclareFontShape{U}{mathx}{m}{n}{<-> mathx10}{}         
\DeclareSymbolFont{mathx}{U}{mathx}{m}{n}
\DeclareMathAccent{\widehat}{0}{mathx}{"70}
\DeclareMathAccent{\widecheck}{0}{mathx}{"71}
\newcommand{\new}{\newcommand}
\newcommand{\0}{^{\circ}}
\new{\oh}{^{\circ\circ}}
\newcommand{\Ann}{\mathrm{Ann}}
\newcommand{\ann}{\operatorname{Ann}}
\newcommand\arwf[1]{\xrightarrow{#1}}
\newcommand{\Ass}{\operatorname{Ass}}
\newcommand{\ass}{\operatorname{Ass}}
\newcommand{\bE}{\mathbf{E}}
\newcommand\bem[1]{\bibitem[#1]{#1}}
\newcommand{\ben}{\begin{enumerate}}
\newcommand{\bena}{\ben[label=$({\rm \alph*})$]}
\newcommand{\beni}{\ben[label=$({\rm \roman*})$]}
\newcommand{\benn}{\ben[label=$({\rm \arabic*})$]}
\newcommand{\benr}{\ben[resume*]}
\newcommand\bensp[1]{\ben[label=#1]}
\newcommand{\blank}{\underline{\hbox{\ \ }}}
\newcommand{\bpm}{\begin{pmatrix}}
\newcommand{\bu}{{\bullet}}
\newcommand{\cC}{\mathcal{C}}
\new{\ccC}{\widecheck{\cC}}
\newcommand\Cech{\v{C}ech\ }
\newcommand{\cF}{\mathcal{F}}
\newcommand{\cH}{\mathcal{H}}
\newcommand{\cI}{\mathcal{I}}
\newcommand{\cM}{\mathcal{M}}
\newcommand{\CM}{Cohen-Macaulay\xspace}
\newcommand{\cP}{\mathcal{P}}
\newcommand{\cof}{\prec \kern -4 pt \prec_{\mathrm{cof}}}
\newcommand{\Coker}{\operatorname{Coker}}
\newcommand{\cR}{\mathcal{R}}
\newcommand{\cts}{^{\mathrm{cts}}}
\newcommand{\dagref}{\hyperref[dagger]{$(\dagger)$}\xspace}
\newcommand{\depth}{\operatorname{depth}}
\newcommand{\di}{\mathrm{dim}}
\newcommand{\disp}{\displaystyle}
\newcommand{\dlim}[1]{\varinjlim_{#1}}
\newcommand{\e}{{}^e\!}
\newcommand{\E}{\operatorname{E}} 
\newcommand{\edc}{\end{document}}
\newcommand{\een}{\end{enumerate}}
\newcommand{\epm}{\end{pmatrix}}
\newcommand{\ext}{\operatorname{Ext}}
\newcommand{\Ext}{\operatorname{Ext}}
\newcommand{\fa}{\mathfrak{a}}
\newcommand{\fe}{\mathfrak{e}}
\newcommand{\ff}{free filterable relative to $P$\xspace} 
\newcommand{\fffP}{free filterable relative to $\fP$\xspace}
\newcommand{\ffr}{free filterable relative to\xspace}
\newcommand{\fg}{finitely generated\xspace}
\newcommand{\fm}{\mathfrak{m}}
\newcommand{\fn}{\mathfrak{n}}
\newcommand{\fp}{\mathfrak p}
\newcommand{\fP}{\mathfrak{P}}
\newcommand{\fq}{\mathfrak q}
\newcommand{\fra}{\operatorname{frac}}
\renewcommand{\ge}{\geqslant} \renewcommand{\le}{\leqslant} 
\renewcommand{\geq}{\geqslant} \renewcommand{\leq}{\leqslant} 
\newcommand{\gr}{\operatorname{gr}}
\newcommand{\h}{^\mathrm{h}}
\newcommand{\height}{\operatorname{height}}
\renewcommand{\hom}{\operatorname{Hom}}
\newcommand{\Hom}{\operatorname{Hom}}
\newcommand{\id}{\operatorname{I}}
\newcommand{\injdim}{\mathrm{injdim}}
\newcommand{\im}{\operatorname{Im}}
\newcommand{\imp}{\Rightarrow}
\newcommand{\inc}{\subseteq}
\newcommand{\incn}{\subsetneq}
\newcommand{\inj}{\hookrightarrow}
\newcommand{\Ker}{\operatorname{Ker}}
\renewcommand{\ker}{\operatorname{Ker}}
\newcommand{\m}{\mathfrak m}
\newcommand{\Max}{\operatorname{Max}}
\newcommand{\Min}{\operatorname{Min}}
\newcommand{\N}{\mathbb{N}}
\newcommand{\noi}{\noindent}
\newcommand{\nw}[1]{{}^{#1} \!}
\new{\oM}{\ov{M}}
\renewcommand{\phi}{\varphi}
\newcommand{\ol}{\overline}
\newcommand{\om}{\omega}
\newcommand{\op}{\operatorname}
\newcommand{\ov}{\overline}
\new{\oWipr}{\ov{W}'\kern -6pt{}_I}
\newcommand{\page}{\mathbf E^{\bullet\bullet}}
\newcommand{\rmk}{(R, \, \fm, K)}
\newcommand\Rp[2]{$({#1}\, \backslash \!\! \backslash {#2})$-pseudoflat\xspace}
\newcommand{\Rpp}{$(R\, \backslash \!\! \backslash P)$-pseudoflat\xspace}
\newcommand{\Rpf}{$(R\, \backslash \!\! \backslash \fP)$-pseudoflat\xspace}
\newcommand{\sA}{\mathscr{A}}
\new{\ser}[1]{$\op{S}_{#1}$\xspace} \new{\Ser}{\ser} \new{\stwo}{\ser 2}\new{\Stwo}{\stwo} \new{\sone}{\ser 1}\new{\Sone}{\sone}
\new{\sk}{\hskip 2 pt}
\newcommand{\sm}{\setminus}
\newcommand{\smo}{\setminus \{0\}}
\newcommand{\sop}{system of parameters\xspace}
\newcommand{\spec}{\operatorname{Spec}}
\newcommand{\Spec}{\operatorname{Spec}}
\newcommand{\surj}{\twoheadrightarrow}
\newcommand{\tth}{{\widetilde{h}}}
\newcommand{\tg}{{\widetilde{g}}}
\newcommand{\tE}{{\widetilde{E}}}
\newcommand{\tfp}{{\widetilde{\fp}}}
\newcommand{\tor}{\operatorname{Tor}}
\newcommand{\Tor}{\operatorname{Tor}}
\newcommand{\tP}{{\widetilde{P}}}
\newcommand{\tom}{\widetilde{\omega}}
\newcommand{\tQ}{{\widetilde{Q}}}
\newcommand{\tR}{{\widetilde{R}}}
\newcommand{\tW}{\widetilde{W}}
\newcommand{\twg}{$\tW\!$-generically\xspace} 
\new{\tX}{\wt{X}}
\new{\tZ}{\wt{Z}}
\newcommand{\ul}{\underline}
\newcommand{\uu}{\ul{u}}
\newcommand{\uU}{\ul{U}}
\newcommand{\ux}{\underline{x}}
\newcommand{\uy}{\underline{y}}
\newcommand{\uz}{\underline{z}}
\newcommand{\uZ}{\ul{Z}}
\newcommand{\vect}[2]{{#1}_1, \, \ldots, \, {#1}_{#2}}
\newcommand{\vct}[3]{{#1}^{#2}_1, \,  \ldots, \, {#1}^{#2}_{#3}}
\newcommand{\vP}{{\breve{P}}}
\newcommand{\vQ}{{\breve{Q}}}
\newcommand{\vR}{{\breve{R}}}
\newcommand{\vtR}{{\breve{\tR}}}
\newcommand{\vtom}{{\breve{\tom}}}
\newcommand{\wg}{$W\!$-generically\xspace} 
\newcommand{\wh}{\widehat}
\newcommand{\wcH}{\widecheck{H}}
\newcommand{\wt}{\widetilde}
\newcommand{\xra}{\xrightarrow} 
\newcommand{\Z}{\mathbb{Z}}
\newcommand{\disappear}[1]{}
\definecolor{grn}{rgb}{0.0, 0.5, 0.0}
\definecolor{gre}{rgb}{0.0, 0.6, 0.0}
\definecolor{prp}{rgb}{0.4, 0.0, 0.3}
\definecolor{ora}{rgb}{0.8, 0.4, 0.0}
\new{\gre}[1]{{\color{gre} #1}}
\def\todo#1
\def\forth#1
\begin{document}

\title[Generic local duality and purity exponents] 
      {Generic local duality and\\ purity exponents} 

\author{Mel Hochster}
\address{Department of Mathematics, University of Michigan, 
         Ann Arbor, MI 48109}
\curraddr{}
\email{hochster@umich.edu}
\author{Yongwei Yao}
\address{Department of Mathematics and Statistics, Georgia State University, 
         Atlanta, GA 30303}
\curraddr{}
\email{yyao@gsu.edu}
\thanks{The first author was partially supported by National Science Foundation grants DMS-1902116 and DMS-2200501.}

\subjclass[2020]{Primary. 13D45, 13A35, 13C11, 13F40. Secondary. 13H10, 13B40.}

\keywords{Canonical module, F-pure regular ring, generic, injective hull, local cohomology, local duality, 
purity exponent, relative injective hull, strongly F-regular ring, very strongly F-regular ring}

\date{\today} 

\maketitle

\begin{abstract}
We prove a form of generic local duality that generalizes a result of Karen E.~Smith.
Specifically, let $R$ be a Noetherian ring, let $P$ be a prime ideal of $R$ of height $h$, let $A:=R/P$,
and $W$  be a subset  of $R$ that maps onto $A\setminus \{0\}$.  Suppose that $R_P$ is Cohen-Macaulay, and
that $\omega$ is a finitely generated $R$-module such that $\omega_P$ is a canonical module for $R_P$.
Let $E:=H^h_P(\omega)$.  We show that for every finitely generated $R$-module $M$ there exists $g \in W$
such that for all $j\geq 0$,  $H_P^j(M)_g \cong \operatorname{Hom}_R(\Ext_R^{h-j}(M,\, \omega),\, E)_g$, and that, 
moreover, every $H_P^j(M)_g$ has an ascending filtration by a countable sequence of finitely generated 
submodules such that the factors are finitely generated free $A_g$-modules. In fact,  this sequence may 
be taken to be $\{\operatorname{Ann}_{H_P^j(M)_g}P^n\}_n$. We use this result to obtain a strengthened form of the 
base change results proved by Smith.  We also use this result to study the purity exponent for a 
nonzerodivisor $c$  in a reduced excellent Noetherian ring $R$ of prime characteristic $p$, 
which is the least  $e \in \mathbb{N}$ such that the map $R \to R^{1/p^e}$ with $1 \mapsto c^{1/p^e}$ 
 is pure. In particular, in the case where $R$ is a homomorphic image of an excellent Cohen-Macaulay ring and is 
 $\operatorname{S}_2$,  we establish an upper semicontinuity result for the function 
 $\mathfrak{e}_c:\operatorname{Spec}(R) \to \mathbb{N} \cup \{\infty\}$, where 
 $\mathfrak{e}_c(P)$ is the purity exponent for the image of $c$ in $R_P$.  This result enables us to 
 prove that excellent strongly F-regular rings are very strongly F-regular (also called F-pure regular). 
Other consequences are that the strongly F-regular locus is open in every excellent Noetherian 
ring of prime characteristic $p > 0$ and that
 the F-pure locus is open in every $\operatorname{S}_2$ ring that is a homomorphic image of an excellent 
 Cohen-Macaulay ring.
\end{abstract} 

\bigskip
{\leftskip 71pt \rightskip 71pt \baselineskip 11pt
{\footnotesize \noi \it This paper celebrates Karen E.~Smith's remarkable and fundamental contributions
to commutative algebra and algebraic geometry, which have been an inspiration for the research of many mathematicians.}\par} 

\section{Introduction}

Throughout this paper, unless otherwise specified, all rings are assumed to be commutative, associative with multiplicative 
identity and modules are unital.  Given rings are usually Noetherian. 
We let $\N_+ \subsetneq \N \subsetneq \Z$ be the set of positive integers, the set of nonnegative integers, and the set 
of integers, respectively.

We prove a form of generic local duality that generalizes a result of Karen E.~Smith \cite{Sm18}.  Our result
may be described as follows. 
Let $R$ be a Noetherian ring, let $P$ be a prime ideal of $R$ of height $h$, let $A:=R/P$,
and let $W$  be a subset of $R$ that maps onto $A\smo$ under the natural homomorphism $R \to R/P$.  
Suppose that $R_P$ is Cohen-Macaulay, and
that $\om$ is a finitely generated $R$-module such that $\om_P$ is a canonical module for $R_P$.
Let $E:=H^h_P(\om)$.  We show that for every finitely generated $R$-module $M$ there exists $g \in W$
such that for all $j \in \N$,
\footnote{Note that in the statement above, if either $R$, or $R_{g'}$ for some $g' \in R\setminus P$, is 
excellent and has finite Krull dimension, then $g \in W$ can be chosen so that the isomorphism 
$H_P^j(M)_g \cong \hom_R(\Ext_R^{h-j}(M,\, \om), \,E)_g$ holds {\it for all} $j \in \Z$. See Theorem~\ref{main}, part~(e).}
$H_P^j(M)_g \cong \hom_R(\Ext_R^{h-j}(M,\, \om), \,E)_g$, and 
$H := H_P^j(M)_g$ satisfies the following condition:
\bensp{{($\dagger$)}}\phantomsection \label{dagger}
\item The module $H$ has an ascending filtration by a countable sequence of finitely generated $R_g$-submodules 
such that the factors are  free $A_g$-modules and the union of the submodules in the sequence is the whole module.
More precisely, we show that  $\{\Ann_{H_P^j(M)_g}P^n\}_n$ gives such a filtration, and
the factors are $A_g$-free of finite rank.
\een

We apply these results to obtain a strengthened form of base change results proved in \cite{Sm18}.
We also note the related papers \cite{ChCidSim22, Cid23, CidSmi24}.
We point the reader to Theorems \ref{main}, \ref{bc-reduced}, \ref{bc0-reduced} and \ref{bc-geom}
for statements of our main results on filtering local cohomology and on base change. 

We call a module satisfying \dagref \emph{free filterable relative to} $P_g$.  The formal definition of
this notion is given in Definition~\ref{def:ff}. 

In the situation studied by Smith in \cite{Sm18}, $A$ is a subring of $R$ such that 
$A$ is isomorphic to $R/P$ under the natural map $R \to R/P$, and $W = A\smo$. 
We compare the results of this manuscript and those of \cite{Sm18} further in \S\ref{Smith}.  
In the applications here, there is usually no copy of $R/P$ in $R$ (i.e., $R \surj R/P$ does not split as a map of rings).  
Most frequently, $W$  is simply $R \sm P$. 
A key property of $R$-modules $M$ that are \ff  is that every sequence of elements in $R$ that maps to a possibly
improper regular sequence in $A$ is a possibly improper regular sequence\footnote{A sequence $\vect f s$ of 
elements of a ring 
$R$ is called a {\it possibly improper regular sequence} on an $R$-module $M$ (which is often the ring) 
if for all $0 \leq i \leq s-1$, the element $f_{i+1}$ is a nonzerodivisor on $M/(\vect fi)M$.  Note that we do 
not require $(\vect f s)M \not= M$.}  on $M$;   moreover, this property is preserved if
we make a flat base change $R \to R'$.
In fact, this  property  also holds under substantially weaker assumptions than in
the definition of \ff.  We want to study this property even when $P$ is replaced by an ideal $\fP$ that
is not necessarily prime. 

We are therefore led to make the following definition:

\begin{definition} \label{def:Rpf} If $\fP  \subsetneq R$ is any proper ideal, 
we define an $R$-module $M$ to be  {\it \Rpf} if, for every flat $R$-algebra $R'$, a  sequence in $R'$ that maps to
a possibly improper regular sequence in $R'/\fP R'$ is a possibly improper regular sequence on 
$R' \otimes _RM$. 
\end{definition}

If $M$ is killed by $P$ and $A:= R/P$ is regular, then $M$ is \Rpp if and only if $M$ is $A$-flat 
(see Proposition~\ref{reRpf}(i)). 
In this paper we are often interested in the case where $M$ is an $R$-module, not necessarily finitely
generated, $P$ is a prime ideal in $R$ such that $A:=R/P$ becomes regular after suitable localization, 
and such that every element of $M$ is killed by a power of $P$.  We briefly develop some facts about \Rpf modules 
in \S\ref{Rpf}.

In \S\ref{pureexp}, we use our results to study the purity exponent for a nonzerodivisor $c$ in a ring $R$ of 
 prime characteristic $p >0$ that is a homomorphic
 image of an excellent \CM ring and is \stwo.  In the case where $R$ is reduced, the purity exponent for $c$
 is the least integer $e \in \N$ such that the $R$-linear map $R \to R^{1/p^n}$ with $1 \mapsto c^{1/p^n}$
 is pure for all $n \ge e$. In particular, we establish an upper semicontinuity result for the function $\fe_c:\spec(R) \to \N \cup \{\infty\}$, where 
 $\fe_c(P)$ is the purity exponent for the image of $c$ in $R_P$.  This result enables us to prove that excellent strongly 
 F-regular rings are very strongly F-regular (also called F-pure regular). The terminology is explained in \S\ref{vstfreg}.   
 This result was previously known when $R$ is F-finite, when $R$ is essentially of finite type over an 
 excellent semilocal ring, and in a handful of other cases.  We refer the reader to 
 \cite{Hash10}, \cite{DaMuSm20}, \cite{DaSm16},  \cite{HoY23}, and \cite{DEST25}  for these earlier results.  
The semicontinuity result also implies that  the strongly F-regular locus is open in every excellent Noetherian ring, and also that
 the F-pure locus is open in every ring that is an \stwo image of an excellent \CM ring.

 A technical problem that arises is that an excellent Cohen-Macaulay local ring $R_P$ may fail to have
 a canonical module.  This difficulty can be overcome because there is always a local \'etale extension
 of $R_P$ with the same residue class field that has a canonical module. See \S\ref{canon}.
 
There are no restrictions on the characteristic of the ring in \S\S\ref{genld}--\ref{canon}.  The results in \S\ref{pureexp} typically need the assumption that the ring has prime characteristic $p > 0$.  

For background results in commutative algebra, see \cite{BH93, Mat80, Mat87, Na62}.

We note that using the results of an earlier version of this paper, S.~Lyu \cite[Appendix~A]{Lyu25} has generalized our Theorems~\ref{main}, \ref{mainsmct}, \ref{strong-locus}, and Corollary~\ref{pure-locus}
by substantially weakening the hypotheses. See Remarks~\ref{Lyu-Ch2} and \ref{Lyu-Ch8} for details.

We would like to thank Hailong Dao, Rankeya Datta, and Thomas Polstra for their comments on an 
earlier version of this manuscript.

\section{Generic local duality}\label{genld} 

In this section and the next, in Theorems~\ref{main}, \ref{bc-reduced}, \ref{bc0-reduced} and~\ref{bc-geom}, we state and 
prove our main results on  generic local duality and base change. We begin with a brief recap of the basic local 
cohomology results that we are generalizing.

\begin{discussion}[Local duality]\label{recap} We give a brief treatment of the basic facts about local duality over 
a Cohen-Macaulay  local ring $R$.  We refer the reader to \cite{Gro67}  and \cite[Ch.~3]{BH93} for a detailed 
discussion. Throughout this discussion,  
$(R,\,P,\,\kappa)$ is a Cohen-Macaulay local ring of Krull dimension $h$, and $\om$ is a canonical module 
for $R$. The following statements are part of the standard theory. Let $M$ be a finitely generated 
$R$-module (although some statements hold without finite generation).

\benn 

\item The module $\omega$ is a small (i.e., finitely generated) Cohen-Macaulay module over $R$ 
of Krull dimension $h$ such that  $H^h_P(\omega)$ is  an injective hull for $\kappa$. This statement 
characterizes $\om$ up to  noncanonical isomorphism. 

\item Hence, $\ext_R^i((M,\, H^h_P(\omega)\bigr) = 0$ for all $i \geq 1$, and 
$\Hom_R\bigl(\kappa,\, H^h_P(\omega)\big) \cong \kappa$. 

\item If $\fp$ is any prime ideal of $R$,  $\omega_{\fp}$ is a canonical module for $R_{\fp}$. 

\item If $\vect x k$ is part of a \sop\ for $R$, $\omega/(\vect x k)\omega$ is a canonical module for $R/(\vect x k)$. 

\item The homothety map $R \to \hom_R(\om,\,\om)$, determined by $r \mapsto (u \mapsto ru)$, is an isomorphism. 

\item (Local duality)  For every $i$, there is a natural isomorphism  of covariant functors $H^i_P(\blank)$ and 
$\hom_R\big((\ext_R^{h-i}(\blank, \omega),\, H^h_P(\om)\bigr )$, 
so that $H^i_P(M)$ is the Matlis dual of $\ext_R^{h-i}(M, \om)$.   In consequence, for all $i$, 
the modules $H^i_P(M)$ have DCC. 
 
\item If $M$ has finite length, i.e., if $M$ is killed by a power of $P$, then $\Ext_R^h(M,\, \om)$ has a natural 
identification with the Matlis dual of $M$, which by (1) above may be thought of as 
$\Hom_R\big(M,\, H^h_P(\om)\big)$. 
\item For all $i \in \N$,  $\di\bigl(\Ext_R^i(M,\, \om)\bigr) \le h-i$. 
\item If $M\neq 0$ is a finitely generated Cohen-Macaulay module of dimension $\delta$,  then $\ext^i_R(M, \om) = 0$ for all $i$
 except when $i = h-\delta$,
which is also the height of the annihilator of $M$. Moreover, $\ext_R^{h-\delta}(M,\, \om)$ is always nonzero.
In consequence, $\ext^{h - \delta}_R(\blank, \, \om)$ is exact on finitely generated Cohen-Macaulay $R$-modules of dimension $\delta$.
In particular, $\hom_R(\blank, \, \om)$ is an exact contravariant functor from finitely generated Cohen-Macaulay 
modules of dimension $h$ to finitely generated Cohen-Macaulay modules of dimension $h$. In addition,
since $\om$ is Cohen-Macaulay of dimension $h$,  $\ext^i_R(\om, \, \om) = 0$ for $i> 0$.  
\een

All are well known, but we give short proofs of (7), (8) and (9), assuming the preceding items (1)--(6).
To prove (7),  we note that if $M$ is killed by a power of $P$, then we have, using (6), that  
$M = H_P^0(M) \cong \hom_R\big(\ext_R^h(M,\, \om), \,H^h_P(\om)\big)$, and then each of $M$ and 
$\ext_R^h(M,\, \om)$ is the Matlis dual of the other \qed

For (8), let $N := \ext_R^i(M,\, \om)$. We may assume $0 \le i \le h$.  Observe that $N$ is not supported at
any prime $\fp$ with  $\height(\fp)<i$, since $N_{\fp}\cong \ext_{R_{\fp}}^i(M_{\fp},\, \om_{\fp}) = 0$. Thus,
$\di(N) \le h-i$. \qed

For (9), we also remark that for any finitely generated module $M$ over a Noetherian ring $R$ and 
any ideal $I$ of $R$, the modules
$H^i_I(M)$ all vanish if $IM = M$, while if $IM \not= M$ the first nonvanishing local cohomology module
occurs when $i = \depth_IM$, and all local cohomology modules vanish if $i$ exceeds either $\dim(M)$
or the number of generators of an ideal with the same radical as $I$.  It follows that when $M$ is a finitely
generated Cohen-Macaulay module over a local ring $(R,P)$, $H^i_P(M) = 0$ for all $i$ except when
$i = \di(M)$. By locality duality, we establish (9). \qed

Before concluding the discussion, we note also that these results imply 
that a canonical module $\omega$ has injective dimension precisely $h$,
that $R$ is a canonical module for $R$ if and only if $R$ is a Gorenstein local 
ring, and that $R$ has a canonical module if and only if it is a homomorphic 
image of a Gorenstein local ring $S$, in which case, if $R \cong S/I$, we may take 
$\omega = \Ext^{c}_S(R, \, S)$,  where  $c := {\di(S)-\di(R)} = \height I$.
\end{discussion}

\subsection{Modules free filterable with respect to a prime} \label{subsec:ff} We first define modules 
that are free filterable with respect to an ideal (especially, with respect to a prime ideal).

\begin{definition}[Compare with \dagref]\label{def:ff} 
Let $R$ be a ring, $\fP$ an ideal of $R$, and $M$ an $R$-module. We shall say that an $R$-module $M$ is {\it \fffP} if it has an ascending
filtration by a sequence of finitely generated $R$-submodules $\{M_n\}_{n \in \N}$ such that all of the factors 
$M_{n+1}/M_n$ are free over $R/\fP$ 
and such that the union $\bigcup_{n \in \N}M_n$ of the submodules is equal to $M$. 
\end{definition} 

In particular, we are interested in modules that are free filterable with respect to a prime ideal.  Next, we begin the development of several auxiliary results that will be needed in the proof of the main result,
Theorem~\ref{main}.

\begin{notation}\label{not1} From this point on in this section, let $R$ be a Noetherian ring,   
let $P \inc R$ be a prime ideal of $R$, let  $A := R/P$, and let $M$ denote an $R$-module that is not 
necessarily finitely  generated until Notation~\ref{not}, where the restriction that $M$ be finitely generated 
will be imposed.  
\end{notation}

The following fact, although obvious, is very useful if there happens to be a ``copy" of $A$ in $R$.

\begin{proposition}\label{ffAfree} Suppose that $A \inj R$  and that $P$ is a prime ideal of $R$ such that
the composite map $A \inj R \surj R/P$ is an isomorphism.  Then an $R$-module $M$ is \ff
if and only if it is free of finite or countably infinite rank. 
\end{proposition}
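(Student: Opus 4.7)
The plan is to prove both directions of the equivalence, exploiting the decomposition $R = A \oplus P$ of $R$ as an $A$-module that comes from the split inclusion $A \hookrightarrow R$ complementing the ideal $P$. Following the convention compatible with the natural filtration $\{\Ann_H(P^n)\}_n$ used in Theorem~\ref{main}, I take the filtration to begin with $M_0 = 0$.

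For the forward direction, suppose $M$ is free filterable relative to $P$ with filtration $0 = M_0 \subseteq M_1 \subseteq M_2 \subseteq \cdots$ whose union is $M$, and each factor $F_n := M_{n+1}/M_n$ free over $R/P \cong A$. Since $M_{n+1}$ is finitely generated over $R$, the quotient $F_n$ is finitely generated over $R/P$ and hence $A$-free of finite rank. Because $F_n$ is $A$-projective, the short exact sequence $0 \to M_n \to M_{n+1} \to F_n \to 0$ splits as a sequence of $A$-modules. Choosing $A$-sections $\sigma_n \colon F_n \hookrightarrow M_{n+1}$ and letting $G_n := \sigma_n(F_n)$ yields $M_{n+1} = M_n \oplus G_n$ as $A$-modules. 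Inductively $M_n \cong \bigoplus_{k < n} G_k$ is $A$-free of finite rank, and taking the directed union gives $M \cong \bigoplus_{n \geq 0} G_n$ as an $A$-module, free of finite or countably infinite rank.

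For the backward direction, suppose $M$ is $A$-free with basis $\{e_i\}_{i \in I}$ and $|I| \leq \aleph_0$. The naive candidate filtration by $A$-spans of initial segments need not give $R$-submodules, because $p \cdot e_i$ for $p \in P$ may involve basis elements of higher index. To remedy this, I use the fact (implicit in the context, and a consequence of the forward analysis) that $M$ is $P$-nilpotent, so the iterated $P$-multiples of any $e_i$ span an $A$-submodule of finite rank. I then enumerate the basis by a diagonal/interleaving procedure so that each $p e_i$ lies in the $A$-span of basis elements preceding $e_i$. With this triangular ordering, $M_n := \mathrm{span}_A\{e_1, \ldots, e_n\}$ is an $R$-submodule of $M$, finitely generated over $R$, with $M_{n+1}/M_n \cong A$ free over $R/P$ and $\bigcup_n M_n = M$, which exhibits the required free-filterable filtration.

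The main obstacle is the combinatorial construction in the backward direction: arranging the basis so that closure under the $P$-action respects the ordering. This is where $P$-nilpotence combines with the finite-combination property of the $A$-basis---each $p e_i$ being a finite $A$-combination of basis elements, and the iterated $P$-closure of any single basis element terminating after finitely many steps---to make a well-defined interleaving possible. Once the enumeration is fixed, checking $R$-stability of each $M_n$ and the rank-one $A$-freeness of successive quotients is a routine verification.
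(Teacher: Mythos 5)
Your forward direction is correct and matches what the paper intends (the paper states this proposition without proof, calling it ``obvious''). From a filtration $0 = M_0 \subseteq M_1 \subseteq \cdots$ with finitely generated $A$-free factors $F_n$, choosing $A$-splittings of $0 \to M_n \to M_{n+1} \to F_n \to 0$ gives $M \cong \bigoplus_n G_n$ as $A$-modules, which is indeed $A$-free of finite or countably infinite rank. This is also the only direction the paper ever invokes (Discussions~\ref{A-base-change} and~\ref{comp}).

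The backward direction contains a genuine gap. You assert that $A$-freeness of $M$ forces $M$ to be $P$-nilpotent (every element killed by a power of $P$), calling this ``a consequence of the forward analysis.'' It is not. The forward analysis, together with Proposition~\ref{ff}(a), shows that \ff implies both $A$-freeness \emph{and} $P$-nilpotence; it does not show the converse implication from $A$-freeness to $P$-nilpotence, and this is the step you are using. Indeed the implication is false: take $A = k$, $R = k[x]$, $P = (x)$, and $M = R$. Then $M$ is $A$-free of countably infinite rank (basis $1, x, x^2, \ldots$), but no nonzero element of $M$ is killed by any power of $P$, so $M$ admits no ascending filtration $0 = M_0 \subseteq M_1 \subseteq \cdots$ of nonzero finitely generated $R$-submodules with factors killed by $P$; that is, $M$ is not \ff. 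So the ``only if'' direction, read literally, is false, and no proof of it can exist. (The intended reading is surely restricted to modules that are already killed element-wise by powers of $P$ — which is the situation for all the modules $H^j_P(M)$ and $\hom_R(M,E)$ to which the proposition is applied.)

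Even if one adds the $P$-nilpotence hypothesis, your construction as written is still incomplete: a mere re-ordering of a given $A$-basis need not produce the triangular structure you need. For instance, with $A = k$, $R = k[x]$, $P = (x)$, $M = ke_1 \oplus ke_2 \oplus ke_3$, and $xe_1 = e_2$, $xe_2 = e_1 - e_3$, $xe_3 = e_2$ (which is $x^3$-torsion), the ``which $e_j$ appear in $xe_i$'' relation forces $e_2$ before $e_1$ and $e_1$ before $e_2$, so no re-indexing of $\{e_1,e_2,e_3\}$ works; one must change basis (e.g.\ $f_1 = e_1 - e_3$, $f_2 = e_2$, $f_3 = e_1 + e_3$). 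Whether a triangular $A$-basis can always be manufactured over a general domain $A$ is itself a nontrivial claim — the obstruction being that a finitely generated module over $R/P^m$ need not admit a filtration with $A$-free factors — so this step, if kept, would need an argument rather than an invocation of an ``interleaving procedure.''
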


\begin{proposition}\label{ff}  Let $R$, $M$, $P$, and  $A$ be as in Notation~\ref{not1}. 
\bena
\item If $M$ is \ff, then $\Ass_R(M) \subseteq \{P\}$, and every finitely generated submodule of $M$ is killed by 
a power of $P$.
\item If $M$ is any finitely generated module killed by a power of $P$, there exists $g \in R\sm P$ such
that $M_g$ is free filterable relative to $PR_g$ over $R_g$. 
\item If $M$ is \ff and $N$ is a finitely generated submodule, then there exists  $g \in R\sm P$ such
that $(M/N)_g$ and $N_g$ are free filterable relative to $PR_g$ over $R_g$.  
\een
\end{proposition}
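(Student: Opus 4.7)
My approach is to prove the three parts in order, using each as a lemma for the next.

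\textbf{For part (a)}, the plan is to show by induction on $n$ that $P^n M_n = 0$, extending the filtration by the convention $M_{-1}:=0$ so that $M_0$ itself is one of the $A$-free factors. The inductive step follows from $M_{n+1}/M_n$ being annihilated by $P$ (since it is $A$-free), giving $PM_{n+1} \subseteq M_n$ and hence $P^{n+1}M_{n+1} \subseteq P^n M_n = 0$. A finitely generated submodule $N \subseteq M$ sits in some $M_n$, because each of its finitely many generators lies in some $M_{n_i}$ and the chain is ascending, so $N$ is killed by a power of $P$. For the associated-prime assertion, I would iterate $\Ass_R(M_{n+1}) \subseteq \Ass_R(M_n) \cup \Ass_R(M_{n+1}/M_n)$, use $\Ass_R(F) \subseteq \{P\}$ for any $A$-free $R$-module $F$ (since $\Ass_R(A) = \{P\}$ and $\Ass_R$ commutes with direct sums over a Noetherian ring), and then take the directed union $\Ass_R(M) = \bigcup_n \Ass_R(M_n)$.

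\textbf{For part (b)}, the natural filtration to try is $M_n := \Ann_M(P^n)$, which stabilizes at $M_N = M$ once $P^N M = 0$. Each successive quotient $M_n/M_{n-1}$ is killed by $P$, hence a finitely generated $A$-module. The key technical input is generic freeness over the domain $A$: lifting a $\fra(A)$-basis of $(M_n/M_{n-1})\otimes_A \fra(A)$ produces a nonzero $a_n \in A$ making $(M_n/M_{n-1})_{a_n}$ free over $A_{a_n}$. Since only finitely many $n \leq N$ contribute nontrivially, I combine these into a single $g \in R \setminus P$ whose image in $A$ inverts all the needed $a_n$. Because $R/P^n$ is finitely presented, $\Ann_M(P^n) = \Hom_R(R/P^n,M)$ commutes with localization, so the localized chain is $\{\Ann_{M_g}(P^n R_g)\}_n$; it is eventually constant equal to $M_g$ and hence is the desired countable free filtration relative to $PR_g$.

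\textbf{For part (c)}, I would produce filtrations for $N_g$ and $(M/N)_g$ separately using a common $g$. By (a), $N$ is killed by a power of $P$, so (b) yields $g_1 \in R \setminus P$ with $N_{g_1}$ \ffr $PR_{g_1}$. Since $N$ is finitely generated and $M = \bigcup_n M_n$, we have $N \subseteq M_k$ for some $k$; the quotient $M_k/N$ is finitely generated and, by (a), killed by a power of $P$, so (b) again produces $g_2 \in R \setminus P$ with $(M_k/N)_{g_2}$ \ffr $PR_{g_2}$. Setting $g := g_1 g_2$, I would splice: take the finite free filtration of $(M_k/N)_g$ supplied by (b), then extend upward by $(M_{k+1}/N)_g \subseteq (M_{k+2}/N)_g \subseteq \cdots$, whose successive quotients $(M_{m+1}/M_m)_g$ are $A_g$-free because $M_{m+1}/M_m$ is $A$-free.

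The substantive technical input is the generic-freeness argument in (b); the rest is careful bookkeeping. The main thing to verify in (c) is that the concatenated filtration indeed satisfies Definition~\ref{def:ff}: all terms are finitely generated $R_g$-submodules, all successive factors are $A_g$-free, and the union exhausts $(M/N)_g$ since already the tail $\bigcup_m(M_m/N)_g = (M/N)_g$.
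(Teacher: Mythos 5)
Your proof is correct. Parts (a) and (c) are in substance the same as the paper's argument: in (a) you show $P^nM_n = 0$ by induction and identify $\Ass_R(M)$ via the directed union, which is exactly what the paper's terse sentence ``every finitely generated submodule is contained in an $N'$ with a finite filtration by direct sums of $A$'' is driving at; in (c) you choose $M_k \supseteq N$, apply (b) to $N$ and to $M_k/N$, and splice the resulting filtration with the tail $\{(M_m/N)_g\}_{m\ge k}$, which is the paper's argument (its $N'$ is your $M_k$, and the splicing is an instance of Lemma~\ref{lem:ff}).

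The genuine difference is part (b). You filter $M$ by $M_n := \Ann_M(P^n)$, note that each $M_n/M_{n-1}$ is a finitely generated $A$-module, and invoke generic freeness over the Noetherian domain $A$ to make each of the finitely many nontrivial factors free after localizing at a single $g$ (lifted to $R\sm P$). The paper instead takes a finite prime cyclic filtration of $M$ with factors $R/Q_i$, observes that every $Q_i \supseteq P$ because $M$ is killed by a power of $P$, and chooses $g \in R\sm P$ lying in every $Q_i$ that strictly contains $P$; after inverting $g$, those factors die and only copies of $A_g$ survive. The paper's mechanism is a bit leaner (prime avoidance, no generic-freeness step); yours is more computational but has the virtue that the filtration you produce, $\{\Ann_{M_g}(P^nR_g)\}_n$, is exactly the filtration that the paper singles out and uses repeatedly later (e.g., Theorem~\ref{main}(d) and (f)), so the proof dovetails more closely with the rest of the section. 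One cosmetic remark: the convention $M_{-1}:=0$ so that $M_0$ itself is a factor is unusual; the convention elsewhere in the paper (cf.~Discussion~\ref{disc:compat}) is simply $M_0 = 0$, which makes the bookkeeping cleaner and removes the need for the shifted index.
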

\begin{proof} Part~(a) follows from the fact that every finitely generated submodule $N$ of $M$ is contained
in a submodule $N'$ with a finite filtration by finitely many factors that are direct sums of $A$, with
the additional property that $M/N'$ is \ff.   
To prove part~(b), take a finite prime cyclic filtration of $M$ and localize at $g$ that is not in $P$ but is in all other
primes $Q_i$ such that $R/Q_i$ occurs in the filtration.  
For part~(c),  choose $N'$ as in the proof of part~(a) and then, by part~(b), there exists $g \notin P$ such that 
$(N'/N)_g$ and $N_g$ become free filterable with respect to $PR_g$ over $R_g$. It follows that $(M/N)_g$ is 
free filterable relative to $PR_g$ over $R_g$. 
\end{proof}

\begin{remark}\label{rem:fff} Clearly, if $M$ has an ascending filtration by a sequence of finitely generated modules
such that each factor is \ff, then we may refine that filtration to get one in which the factors are all
$A$-free (simply inserting finitely many modules between each pair of consecutive terms of the original filtration to make
all factors $A$-free).  Another useful fact is recorded in Lemma~\ref{lem:ff}.  We need a preliminary discussion. \end{remark}

\begin{discussion}[\textbf{Double filtrations and compatible submodules}] \label{disc:compat}
Suppose that we are given an ascending sequence filtration (these may be finite or infinite) of a module $M$, say 
$$0 = M_0 \inc M_1 \inc \cdots \inc M_n \inc \cdots \inc M,$$ where $\bigcup_{n \in \N} M_n = M$,  and 
for each $n \in \N$ a filtration 
$$M_n = M_{n,0} \inc M_{n,1} \inc \cdots \inc M_{n,s} \inc  \cdots \inc M_{n+1}$$  
such that $\bigcup_s M_{n,s} = M_{n+1}$ and every $M_{n,s+1}/M_{n,s}$ is finitely generated. This, equivalently, 
gives a filtration of each factor $M_{n+1}/M_n$ using the modules $M_{n,s}/M_n$.  We refer
to the $M_{n,s}$ as a {\it double filtration} of $M$. We refer to the modules $M_{n+1}/M_n$ as the {\it factors}
of the double  filtration and the modules $M_{n,s+1}/M_{n,s}$ as the {\it double factors} of the double filtration. 

We say that a submodule module $N \inc M$ is {\it compatible} with this filtration if 
for all $n$, the submodule $(M_{n+1} \cap N) + M_n$ (which is the same as $M_{n+1} \cap (N + M_n)$) 
equals $M_{n,s(n,N)}$ for some $s(n,N) \in \N \cup\{\infty\}$ with the understanding that $M_{n,\infty} = M_{n+1}$.

\smallskip\noi (1) Given a compatible submodule $N$ of $M$, we have an induced filtration 
$\{M_n \cap N\}_n$ of $N$, with the factors $\disp \frac {M_{n+1}\cap N} {M_n\cap N}$. 

\noi (2) Denote $N_n:=M_n \cap N$. We have a double filtration of $N$ as follows. By the compatibility hypothesis
on $N$, we have that $(M_{n+1} \cap N) + M_n = M_{n,s(n,N)}$ for some $s(n,N) \in \N\cup\{\infty\}$. 
Thus, the factors of $N$ are
$$ \frac{N_{n+1}}{N_n} \cong \frac{M_{n+1} \cap N} {M_n \cap N} 
\cong \frac{(M_{n+1} \cap N) + M_n}{M_n} = \frac{M_{n+1} \cap (N + M_n)}{M_n} = \frac{M_{n, s(n, N)}}{M_n}.$$ 
Now we filter $N_{n+1}/N_n$ using the images of the modules $N_{n,s} := M_{n,s}\cap N$. 
It is straightforward to see that 
\[
\frac{N_{n,s+1}}{N_{n,s}} = \frac{M_{n,s+1}\cap N}{M_{n,s}\cap N} 
\cong \frac{(M_{n,s+1}\cap N) + M_{n,s}}{M_{n,s}} 
=\begin{cases}
{M_{n,s+1}}/{M_{n,s}} & \text{if } s < s(n,N)\\
0 & \text{if } s \geq s(n,N)
\end{cases}.
\]
This $\{N_{n,s}\}_s$ is the induced double filtration of $N$.
 
\noi(3) The module $M/N$ has the induced filtration $\{(M_n+N)/N\}_n$ with factors 
$$
\frac{(M_{n+1}+N)/N}{(M_n+N)/N} \cong \frac{M_{n+1}+N}{M_n+N} 
\cong \frac{M_{n+1}}{M_{n+1} \cap (N + M_n)} \cong
\frac {M_{n+1}}{M_{n,s(n,N)}}.
$$
The induced double filtration, by $\{(M_{n,s}+N)/N\}_s$, has double factors
\[
\frac{(M_{n,s+1}+N)/N}{(M_{n,s}+N)/N} 
\cong \frac{M_{n,s+1}}{M_{n,s+1} \cap (N + M_{n,s})} \cong
\begin{cases}
0 & \text{if } s < s(n,N)\\
{M_{n,s+1}}/{M_{n,s}} & \text{if } s \geq s(n,N)
\end{cases}.
\]

\noi (4) Suppose that $Q$ is another compatible submodule of $M$ such that $N \inc Q$.  
We claim that $N$ is a compatible submodule of $Q$.  The induced filtration on $Q$ from part~(1) has
submodules $Q_n = M_n \cap Q$.   The induced filtration from these on $N$ is the same as the one
induced by the original filtration on $M$,  since $(M_n \cap Q) \cap N = M_n \cap N$.  
It is also true that the double filtrations on $N$ induced by $M$ and by $Q$,
as described in part~(2), are the same, since the double filtration that $Q$ induces on $N$ 
consists of the modules $Q_{n,s} \cap N = (M_{n,s} \cap Q) \cap N = M_{n,s} \cap N$. 
  
\noi (5) Hence, if $N \inc Q \inc M$ where $N$ and $Q$ are both compatible with double filtration on
 $M$,  then $Q/N$ has a filtration whose double factors are all double factors of the filtration of $M$.
 
\noi (6) We also observe the following:  if $G$ is any finitely generated submodule of the doubly filtered
module $M$, then there is a finitely generated compatible submodule $N$ of $M$ such that $G \inc N$.  It follows that
$M$ is the directed union of its finitely generated compatible submodules.  To find $N$, 
note that there exists $n \in \N$ such that $G \inc M_{n+1}$. Then there exists $s \in \N$ such that 
$G + M_n \subseteq M_{n,s}$. We shall prove by induction on $n$ that we can find a finitely generated submodule $N$ 
with $G \inc N \inc M_{n+1}$ such that $N$ is a compatible with the double filtration on $M$. 
(Note that the modules $M_t$ in the filtration of $M$ with $t > n+1$ will play no further role in the argument.)
If $n =0$, then $N:= M_{0,s}$ can be used as the choice of $N$. Assume $n \geq 1$. As $M_{n,s}/M_n$ and $G$
are both finitely generated, there exists a finitely generated $R$-submodule $H$ such that $G \inc H \inc M_{n,s}$ and 
$H + M_n = M_{n,s}$. Clearly, $M_n \cap H$ is finitely generated and contained in $M_n$. By the induction hypothesis,
we can choose a finitely generated submodule $L$ with $M_n \cap H \inc L \inc M_n$ such that $L$ is 
compatible with the double filtration on $M$.  Let $N:= L + H$, so that $N \inc M_{n,s} \inc M_{n+1}$ and $N$ is finitely generated. 
As $L \inc M_n$, we have $M_{n+1} \cap (N + M_n) = M_{n+1} \cap (H + M_n) = M_{n,s}$. 
For all $t \ge n+1$, it is clear that $M_{t+1} \cap (N + M_t) = M_t = M_{t,0}$.
Also note that $M_n \cap N = M_n \cap (L + H) = L + (M_n \cap H) = L$, 
since $M_n \cap H \inc L \inc M_n$ by the choice of $L$. Thus, for all $t = 0,\,\dotsc,\, n-1$, we have 
$M_{t+1} \cap N = M_{t+1} \cap L$ and, hence, 
\[
(M_{t+1} \cap N) + M_{t} = (M_{t+1} \cap L) + M_{t} = M_{t,s(t,L)} \text{ for some $s(t,L) \in \N \cup\{\infty\}$},
\] 
since $L$ is a compatible submodule of $M$.
This proves that $N$ is compatible with the double filtration on $M$. 
\end{discussion}

\begin{lemma}\label{lem:ff} If $M$ has a filtration by an ascending sequence of submodules whose union is $M$ and
whose factors are \ff, then $M$ is \ff. 
\end{lemma}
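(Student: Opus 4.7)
The plan is to manufacture a single ascending sequence of finitely generated submodules of $M$ with $A$-free consecutive quotients by exploiting the compatibility formalism of Discussion~\ref{disc:compat}. Let $\{M_n\}_{n\in\N}$ denote the given ascending filtration of $M$ (with $M_0 = 0$, by convention), so that $\bigcup_n M_n = M$ and each $M_{n+1}/M_n$ is \ff. For each $n$, I choose, using the definition of free filterability, an ascending filtration of $M_{n+1}/M_n$ by finitely generated submodules $\{\ol M_{n,s}\}_s$ with union $M_{n+1}/M_n$ and $A$-free quotients $\ol M_{n,s+1}/\ol M_{n,s}$; lifting along $M_{n+1} \surj M_{n+1}/M_n$ yields submodules $M_n = M_{n,0} \inc M_{n,1} \inc \cdots \inc M_{n+1}$ with $\bigcup_s M_{n,s} = M_{n+1}$ and every double factor $M_{n,s+1}/M_{n,s}$ free over $A$ of finite rank. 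This is a double filtration of $M$ in the sense of Discussion~\ref{disc:compat}.

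Next, I would observe that $M$ is countably generated: each $M_{n+1}/M_n$ is an ascending union of the finitely generated $\ol M_{n,s}$ and is therefore countably generated, so $M$, being a countable ascending union of countably generated modules, is generated by a countable sequence $x_1, x_2, \ldots$.

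The heart of the argument is an inductive construction built from parts~(5) and~(6) of Discussion~\ref{disc:compat}. Starting from any finitely generated compatible submodule $L_1 \ni x_1$ supplied by~(6), successively apply~(6) to produce finitely generated compatible submodules $L_{k+1}$ with $L_k + R x_{k+1} \inc L_{k+1}$. Then $\bigcup_k L_k = M$. By part~(5), the finitely generated quotient $L_{k+1}/L_k$ carries a filtration whose nonzero factors lie among the double factors of $M$, hence are free of finite rank over $A$; because $L_{k+1}/L_k$ is finitely generated, this filtration has only finitely many nonzero steps, so lifting it back refines the step $L_k \inc L_{k+1}$ by finitely many finitely generated submodules with $A$-free consecutive quotients. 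Concatenating these refinements over $k$ yields a countable ascending filtration of $M$ by finitely generated submodules with $A$-free quotients whose union is $M$, establishing that $M$ is \ff.

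The main obstacle is that the members $M_n$ of the hypothesized filtration need not be finitely generated, so the easier remark in Remark~\ref{rem:fff} does not apply directly. The remedy is exactly the existence of sufficiently many finitely generated \emph{compatible} submodules provided by Discussion~\ref{disc:compat}(6), together with part~(5), which lets us cut $M$ transversally by finitely generated pieces whose successive differences automatically inherit the $A$-free double-factor structure.
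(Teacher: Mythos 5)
Your proof is correct and takes essentially the same route as the paper: build the double filtration of $M$ with $A$-free finitely generated double factors, then use Discussion~\ref{disc:compat}(6) to recursively construct an exhaustive chain of finitely generated compatible submodules, and finally use Discussion~\ref{disc:compat}(5) plus Noetherianity to refine each step by a finite filtration with $A$-free factors. The only cosmetic difference is in how the chain is forced to exhaust $M$: the paper explicitly collects finite generating sets $S_{n,t}$ of the double factors and forces $N_{n+1}\supseteq\bigcup_{i\le n,\,t\le n}S_{i,t}$, whereas you first observe $M$ is countably generated and then force $L_{k+1}\supseteq L_k+Rx_{k+1}$; these are equivalent.
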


\begin{proof}  The hypothesis says that $M$ has a filtration by submodules $M_n$ such that
every $M_{n+1}/M_n$ is \ff. This gives rise to a double filtration as in Discussion~\ref{disc:compat} such that
every $M_{n,s+1}/M_{n,s}$ is finitely generated and free over $A$. 
Choose a finite set of elements $S_{n, t} \inc M_{n,t+1}$ whose images in $M_{n,t+1}/M_{n,t}$
generate the double factor $M_{n,t+1}/M_{n,t}$.  It is easy to see that the union of the sets $S_{n,t}$, 
namely $\bigcup_{n \ge 0,\,t \ge 0}S_{n,t}$, generates $M$.  Take $N_0 := 0$ and recursively 
construct a sequence of finitely generated compatible submodules $N_n$ of $M$ such that
for each $n$,  $N_{n+1}$ contains $N_n$ and 
$\bigcup_{i \le n,\,t \le n}S_{i,t}$, which is possible by part~(6) of Discussion~\ref{disc:compat}.  
By part~(5) of Discussion~\ref{disc:compat}, the modules $N_{n+1}/N_n$ have finite filtrations 
with factors that are finitely generated and $A$-free, by Remark~\ref{rem:fff}.  
\end{proof}

\subsection{Graded generic freeness}  In the sequel, we need a slightly strengthened version of 
generic freeness \cite[\S6.9]{EGAIV65} in graded situations, with a quite short proof.

\begin{lemma}[Graded generic freeness]\label{grgenfree}  Let $R = \bigoplus_{i \in \N}[R]_i$ be 
an $\N$-graded ring that is finitely generated over a Noetherian domain $A \to [R]_0$,  and let
$M$ be a finitely generated $\Z$-graded $R$-module. Then there exists $g \in A \smo$  such that
$[M_g]_d$ is free over $A_g$ for all $d$. When $A \to [R]_0$ is module-finite, the modules
$[M_g]_d$ are free of finite rank over $A_g$ for all $d$. \end{lemma}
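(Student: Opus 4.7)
The plan is to combine a graded prime filtration reduction with a direct construction via graded Noether normalization. First, every finitely generated graded module $M$ over a Noetherian graded ring admits a finite filtration $0 = M_0 \subset M_1 \subset \cdots \subset M_k = M$ by graded submodules with successive quotients of the form $(R/P_i)(s_i)$ for graded prime ideals $P_i$ and integer shifts $s_i$. The conclusion that each graded component of the localization is $A_g$-free is preserved under extensions (since direct sums of free modules are free, applied in each degree) and under degree shifts (which merely reindex the graded pieces), so it suffices to establish the statement when $M = R/P$ for a single graded prime $P$; the $g$ for $M$ will be the product of the finitely many elements that work for each factor. If $P \cap A \neq 0$, then any nonzero $g \in P \cap A$ annihilates $R/P$, so $(R/P)_g = 0$ and the conclusion is vacuous. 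Otherwise $A \inj R/P$, and after replacing $R$ by $R/P$ we may assume $R$ is a graded Noetherian integral domain, finitely generated over $A$, with $A \inj [R]_0$.

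In this reduced domain case, I would apply graded Noether normalization over $F := \fra(A)$ to produce homogeneous, algebraically independent elements $u_1, \ldots, u_m \in R$ such that $R_F := R \otimes_A F$ is module-finite over $S_F := F[u_1, \ldots, u_m]$, generated by finitely many homogeneous elements $v_1, \ldots, v_k \in R$ satisfying a finite set of $S_F$-linear relations. Clearing the finitely many $A$-denominators in these relations yields $g_1 \in A \smo$ such that $R_{g_1}$ is a finitely generated graded module over the graded polynomial ring $S_{g_1} := A_{g_1}[u_1, \ldots, u_m]$, with a finite explicit $S_{g_1}$-presentation. For each degree $d$, the graded piece $[R_{g_1}]_d$ is then an $A_{g_1}$-cokernel of an explicit map between finitely generated free $A_{g_1}$-modules, whose structure across degrees is governed by the same finite set of relations shifted and multiplied by monomials in the $u_i$.

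The main obstacle, and the most technical step, is to show that a single $g \in A \smo$ makes each $[R_g]_d$ free over $A_g$ uniformly across all $d \in \Z$, rather than a $d$-dependent sequence of localizations. This can be carried out by observing that the $A_{g_1}$-torsion in each $[R_{g_1}]_d$ ultimately traces back to the same finite collection of coefficients in the defining relations, so inverting a single $g$ divisible by all those finitely many denominators suffices in every degree simultaneously. For the module-finite refinement, when $A \inj [R]_0$ is module-finite, each $[R]_d$ is finitely generated over $[R]_0$ via the positive-degree algebra generators of $R$ over $[R]_0$, hence finitely generated over $A$; consequently each $[M_g]_d$ is free of finite rank over $A_g$.
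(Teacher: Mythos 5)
Your reduction to graded cyclic modules of the form $R/P$ via a graded prime filtration is essentially the same first step the paper takes (the paper uses graded cyclic quotients $R/I$; the filtration by graded primes is a refinement of this). The divergence, and the gap, lies in the treatment of the domain case.

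Your proposed main step — graded Noether normalization over $\fra(A)$, clearing denominators, and then claiming that a single $g$ kills "torsion" in every graded piece — does not establish what is needed. First, the claim that the $A_{g_1}$-torsion in each $[R_{g_1}]_d$ "ultimately traces back to the same finite collection of coefficients in the defining relations" is asserted, not proved; the ideals of minors of the degree-$d$ truncation of the presentation matrix vary with $d$, and it is not at all evident that one localization controls all of them. Second, and more fundamentally, even if you could arrange that every $[R_{g}]_d$ is $A_g$-torsion-free with a single $g$, torsion-free finitely generated modules over a general Noetherian domain $A_g$ need not be free (a nonprincipal ideal of a Dedekind domain already fails). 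This is precisely the subtlety the paper flags in the remark immediately following the lemma: ordinary generic freeness only yields that the $[M_g]_d$ are \emph{projective} over $A_g$, and the whole content of the lemma is the upgrade from projective to free. Your sketch conflates freeness with torsion-freeness at the one place where the real work has to happen.

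The paper's proof handles the graded algebra case by a different mechanism that tracks freeness directly: induct on the number $n$ of homogeneous algebra generators, write $R = B[u]$ with $B$ needing fewer than $n$ generators, filter $R$ by the $B$-submodules $C_t = B + Bu + \cdots + Bu^t$, note that the conductor ideals $J_t = \{b \in B : b u^t \in C_{t-1}\}$ ascend and hence stabilize so there are only finitely many distinct factors $D_t \cong B/J_t$, each an $\N$-graded quotient of $B$ with fewer generators, and apply the inductive hypothesis to localize once and for all. Each $[R_g]_d$ then has a countable ascending filtration with $A_g$-free factors, and since free modules are projective the successive extensions split, giving $[R_g]_d$ as a direct sum of free modules, hence free. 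That last step — an ascending filtration with free factors yields a free module — is the device that keeps freeness (rather than mere projectivity or torsion-freeness) under control, and it is exactly what is missing from your argument. If you want to pursue the Noether normalization route you would need an actual uniform control on the Fitting/minor ideals of the degree-$d$ truncations of the $S_{g_1}$-presentation, which is considerably more delicate than "inverting denominators."
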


\begin{proof} Since $M$ has a finite filtration by graded cyclic modules, 
we can reduce at once to the
case where $M  = R/I$,  where $I$ is homogeneous, and we change notation and write $R$ for 
$R/I$. That is, we only need to do the case where the module is a finitely generated graded $A$-algebra.  We use
induction on the number $n$ of homogeneous generators of $R$ over $A$ (the case where $n =0$ is
obvious).  For $n \ge 1$, we may write $R = B[u]$ where $B$ needs strictly fewer than $n$ homogeneous generators over $A$
and $u$ is homogeneous.  We have a filtration of $R$ by $B$-submodules $C_t := B + Bu + \cdots + Bu^t$, and
a typical factor is $D_t := C_t/C_{t-1} \cong B/J_t$,  where $J_t := \{b \in B: bu^t \in C_{t-1}\}$.  The ideals
$J_t$ form an ascending sequence, and so stabilize.  The filtration therefore has only finitely many
distinct factors, each of which is an $\N$-graded quotient of $B$,  and so has strictly fewer than $n$
homogeneous generators over $A$.  Thus, we can localize at one element of $A\smo$ so that every graded component 
of every factor $D_t$ is $A$-free.  It follows that each $[R]_d$ has a countable ascending  filtration by $A$-modules 
such that every factor is $A$-free, which implies that $[R]_d$ is $A$-free. 
\end{proof}

Note that Lemma~\ref{grgenfree} does not follow from the assertion that one can localize at 
$g \in A \setminus \{0\}$ so that $M_g$ becomes $A_g$-free: that
only implies that the $[M_g]_d$ are projective modules over $A_g$.  
In Lemma~\ref{grgenfree}, if $A \to [R]_0$ is not injective the result is true, since localizing at one nonzero element 
of its kernel kills $R$ and $M$. One does not always know whether the map one has is injective.  
\medskip

The following result will be needed in \S\ref{sec-basechange}. It is close to material in 
\cite[Ch.~8, \S20]{Mat80}, 
but we give a very brief argument adapted to our situation.

\begin{proposition}\label{B-flat} Let $R$ be a Noetherian algebra over a Noetherian ring $B$ and suppose 
that $I$ is an ideal of $R$.  Suppose that $M$ is a  finitely generated $R$-module such that $\gr_I(M)$
is $B$-flat. Then $M/I^tM$ is $B$-flat for all $t \in \N$. Suppose also that either
\benn
\item for every prime ideal $\fq$ of $B$,  $\fq \otimes_B M$ is $I$-adically separated, or 
\item $I$ is in the Jacobson radical of $R$, or 
\item $R$ is $I$-adically complete.  
\een
Then $M$ is $B$-flat.  
\end{proposition}

\begin{proof} The first statement follows because $M/I^tM$ has a finite filtration in which the factors 
$I^jM/I^{j+1}M$ are all $B$-flat. For the rest, note that $(3) \imp (2) \imp (1)$, and it suffices
to show that (1) $\imp$ $\Tor^B_1(B/\fq,\, M) = 0$
for all $\fq \in \Spec(B)$, i.e., all the natural maps $\theta: \fq \otimes_B M \to M$ are injective.
But if $u \in \Ker(\theta)$,  then for all
$t \in \N$ the image of $u$ is in $\Ker(\theta_t)$ with $\theta_t:\fq \otimes_A (M/I^tM) \to M/I^tM$. 
Since every $M/I^tM$ is $B$-flat, $\theta_t$ is injective, and so 
$u \in \bigcap_{t \in \N} I^t(\fq \otimes_B M) = 0$, by (1).
 \end{proof}

\subsection{The main result on generic local duality}
For a general reference on local cohomology, see \cite{Gro67}.

\begin{remark} We shall be using freely throughout that Ext and, in particular, Hom, commutes with
flat base change when the ring is Noetherian and the first (left) input module is finitely
generated.  This will most frequently be applied in the case of localization, but will also be needed to
pass to the $P$-adic completion of $R$. \end{remark}

\begin{remark}\label{natisom} 
We recall that, for \emph{any} three $R$-modules $G,\, \om, \,\cC$, we have the following natural maps
\ben
\item $\phi :\hom_R(G, \, \om )\otimes_R\cC \xra{\quad} \hom_R(G, \,\om \otimes_R \cC)$.
\item $\psi :G \otimes_R\hom_R(\om,\,\cC) \xra{\quad} \hom_R\big(\hom_R(G, \,\om),\,\cC\big)$.
\een
Both commute with finite direct sums of choices of $G$. The map $\phi$ is induced by the bilinear map 
$(h, c) \mapsto (u \mapsto h(u)\otimes c)$ and the map $\psi$ is induced by the bilinear map 
$(u, f) \mapsto (h \mapsto f(h(u)))$. Both $\phi$ and $\psi$ provide very useful natural isomorphisms
when $G = R$ and, hence, when $G$ is any finitely generated projective module. Moreover, 
when $G$ is finitely presented, the map $\phi$ is an isomorphism when $\cC$ is flat and the map $\psi$ is
an isomorphism when $\cC$ is injective. See \cite[Prop.~1.4.6(c), p.~39,  Prop.~1.4.9(b), p.~41]{CFH24}.
\end{remark}  

\begin{remark}\label{open}  We shall also be using freely throughout that the regular locus, the 
Cohen-Macaulay locus, and the Gorenstein locus in an excellent ring are Zariski open sets.  
We refer the reader to \cite[\S\S6.11--6.13, \S\S7.6--7.8]{EGAIV65} for a detailed treatment of 
openness of loci.   There is also a particularly readable discussion of some of this 
material in \cite[Ch.~13]{Mat80}. Note that when $R$ is excellent, 
if $\om$ is a finitely generated $R$-module and $P$ a
prime such that $\om_P$ is a canonical module for $R_P$, there exists $g \in R \sm P$ such that
$R_g$ is \CM and $\om_g$ is a global canonical module.  To see this, first note that since $\om_P$ is
faithful over $R_P$, the same holds for $\om$ and $R$ after localizing at one element of $R\sm P$. 
Next, consider the Nagata idealization $S:= R \oplus \om$ (see, for example,  the proof 
of \cite[Thm.~(3.36)]{BH93}, where $S$ is denoted $R*M$) of $\om$, where $\om^2 = 0$.  
Note that $S$ is \fg as an $R$-module, and the algebra maps $R \inj R \oplus \om \surj R$,
where the second map is the quotient map that kills the ideal $\om$, induce isomorphisms of the spectra. 
In fact, the composite map is the identity on $\Spec(R)$. Under this identification of spectra,
 $P$ corresponds to $P\oplus\om$. Then 
 $(R \oplus \om)_{P\oplus \om} \cong R_P \oplus \om_P = S_P$,
 and since $\om_P$ is a canonical module for $R_P$,  we have that 
 $S_P$ is Gorenstein, as in \cite[Thm.~(3.36)]{BH93}.
 Since $S$ is also excellent, we can choose $g \in R\sm P$ such that 
 $S_g \cong R_g \oplus\om_g$ is Gorenstein. Thus, if $Q \in D(g)$, $S_Q \cong R_Q \oplus \om_Q$ 
 is Gorenstein, and $\Hom_{S_Q}(S_Q/\om_Q, S_Q) \cong
\Ann_{S_Q} \om_Q = \om_Q$ (since $\Ann_{R_Q} \om_Q = 0$) is a canonical module for 
$S_Q/\om_Q = R_Q$, as claimed. 
\end{remark}

\begin{definition} Let $W$ be a subset of $R$.  We shall say that a statement about
$R$-modules, ideals of $R$, and/or maps of $R$-modules holds \emph{\wg} if there exists an element $g \in W$
such that the statement  holds after we make a base change from $R$ to $R_g$. \end{definition}

\begin{remark}\label{inv} 
Assume that $I$ is any ideal of $R$, $M$ is an $R$-module in which every element is killed by a power
of $I$, and $g,\, g' \in R$ satisfy $g' \equiv g \mod I$. Then $M_{g} \cong M_{g'}$. That is,
inverting one element with a given residue mod $I$ inverts every element with the same residue.
To see this, note that $g' = g - (g-g') = g[1 -(g-g')/g] \in R_g$ and $(g-g')/g$ acts nilpotently on every 
cyclic submodule of $M_g$, which shows that $g'$ acts bijectively on every cyclic submodule of $M_g$. 
Consequently, $g'$ acts bijectively on $M_g$. By symmetry, $g$ acts bijectively on $M_{g'}$.
Thus $M_{g} \cong M_{g'}$.\footnote{Note also that, if $g' \equiv g \mod I$, then $g'$ is invertible in the $IR_g$-adic completion 
$S$ of $R_g$, and so acts invertibly on every $S$-module:  in this case, $g-g'$ is in the Jacobson radical of $S$.}

Suppose that $H(\blank)$ is a functor that commutes with localization and whose values are modules 
in which every element is killed by a power of $I$, for example $H^i_I(\blank)$, $\Tor_i^R(V,\,H^j_I(\blank))$,
$\Ext^i_R(L,\,H^j_I(\blank))$ or $\Ext^i_R(N,\,\blank)$, 
where $V$ is any $R$-module, $L$ and $N$ are finitely generated modules over a Noetherian
ring $R$ such that $N$ killed by a power of $I$. Suppose that $Q$ is any module, {\it not necessarily a module such that
every element is killed by a power of} $I$. Then we again have $H(Q_g) \cong H(Q_{g'})$
for all $g,\, g' \in R$ such that $g' \equiv g \mod I$, since we may apply the paragraph above to $M = H(Q)$.  
We shall frequently make use of these observations without comment. 
\end{remark}

\begin{remark}\label{avoid} Given any ideal $I$ of $R$ and any $g \in R$, along with finitely many prime ideals $\vect P n \in \Spec(R) \sm V(I)$, there exists $g' \in R$ such that $g' \equiv g$ mod $I$ and  $g' \notin \bigcup_{i=1}^n P_i$. 
This follows at once from the form of prime avoidance given in \cite[Thm.~124, p.~90]{Kap74}.
\end{remark}

\begin{notation}\label{not} Let $R$ be a Noetherian ring, $P$ be a prime ideal of $R$, $A := R/P$, and
$W \inc R \sm P$ a subset that maps onto $A \sm \{0_A\}$ under the natural map $R \to R/P$. 
Let $M$ be  a finitely generated $R$-module.  Assume also that $R_P$ is Cohen-Macaulay of Krull dimension $h$ 
and that $\om$ is a finitely generated $R$-module  such that $\om_P$ is a canonical module for $R_P$. 
Let $E:=H^h_P(\om)$. For every prime $\fp$ of $R$, let $\kappa_\fp$ denote $R_\fp/\fp R_\fp$ 
which is naturally isomorphic with $\fra(R/\fp)$.
\end{notation}

In the sequel, given an $R$-module $M$, we write $\E_R(M)$ for the injective hull of $M$ over $R$, 
which is unique up to non-unique isomorphism.

\begin{remark}\label{rmk:not} With Notation~\ref{not} above, note that the
map  $R \to \Hom_R(\om,\,\om)$ via the
homothety map $r \mapsto (u \mapsto ru)$ becomes an isomorphism when we localize at one 
$g \in R\setminus P$, since that is true once we localize at $P$, and the modules are finitely generated.
Moreover, since $\om_P$ is a canonical module
for the Cohen-Macaulay ring $R_P$,  $\E_{R_P}(R_P/PR_P)$, which is canonically
isomorphic with $\E_R(R/P)$,
is isomorphic with $E_P$. The next theorem, one of the main results of this paper, asserts that $E$ 
behaves very much like $\E_R(R/P)$ on a Zariski neighborhood of $P$.  The size of the neighborhood is
typically adjusted, depending on a specified set of finitely many finitely generated $R$-modules.  
We want to emphasize parts~(e) and (f) of Theorem~\ref{main}. 
\end{remark}

\begin{theorem}[Main theorem on generic local duality]\label{main} Let notation be as in \ref{not}. 
\bena
\item  For every  $i \geq 1$, there exists $g \in W$, where $g$ depends
on $M$ and $i$, such that $\Ext^i_R\big(M, E\big)_g = 0$.  That is, for every $i \geq 1$, the module
$\Ext^i_R\big(M, H^h_P(\om)\big)$ is \wg $0$. Hence, if $0 \to M' \to M \to M'' \to 0$ is a sequence of 
finitely generated $R$-modules that becomes exact after localization at $P$, then the induced sequence
$$0 \xra{\quad} \Hom_R(M'',\, E) \xra{\quad} \Hom_R(M,\, E) \xra{\quad} \Hom_R(M',\, E) \xra{\quad} 0$$ 
is \wg exact.  Therefore, the functor $\hom_R(\blank, \, E)$ is \wg exact on 
any given finite set of short exact sequences of finitely generated $R$-modules, and, hence, on
any given finite set of finite long exact sequences of finitely generated $R$-modules. The choice of $g \in W$, at which we localize, 
depends on which finite set of finite exact sequences one chooses.  

\item 
If $M_P$ is killed by a power of $P$ then we have, $W$-generically,  an isomorphism 
$$\Hom_R(M, \, E) \cong \ext^h_R(M,\,\omega).$$
Consequently, \wg,  $\hom_R(M, \, E)$  is a finitely generated $R$-module. 

Moreover, the unique largest $A$-module contained in $E$, i.e., $\Ann_E P \cong \Hom_R(R/P, \, E)$, is \wg  
isomorphic with $A = R/P$. That is, \wg, we have $R/P \cong \Ann_E P$.  Thus, after localization at $R\setminus P$,
we obtain a map $\kappa_P \mapsto E_P$ such that the image of $\kappa_P$ is the socle
in $E_P \cong \E_{R}(\kappa_P)$.

\item \wg, $H^h_P(R) \cong \hom_R(\om, E)$.   

\item Let $N := \Hom_R(M, E)$, where $M$ is a finitely generated $R$-module.  Then  after 
localizing at one  element of $W$ \emph{that is independent of $n \in \N$}, the filtration of $N$ by the modules 
$\Ann_N P^n$,  which ascends as $n$ increases with $\bigcup_{n \in \N}\Ann_N P^n = N$, has 
factors that are finitely generated free modules over $A$.  In particular, if $\fa$ is any ideal of $R$,
the conclusion holds for $N = \Ann_E\fa \cong \hom_R(R/\fa, \, E)$. 

\item Assuming the isomorphism in part (c), we have a natural transformation of functors 
from the category of finitely generated $R$-modules to the category of $R$-modules, namely
$$\hom_R\big(\Ext_R^{h-i}(\blank, \om), \, H^h_P(\om)\big) \arwf{\qquad} H^i_P(\blank),$$  
such that for every finitely generated $R$-module $M$  there exists $g \in W$ such that for all $i \geq 0$,
\[
(*)\phantomsection\label{main*} 
\qquad\qquad \hom_R\big(\Ext_R^{h-i}(M, \om),\, H^h_P(\om)\big)_g \arwf{\ \ \cong\ \ } H^i_P(M)_g \qquad\qquad
\]
is an isomorphism.  Note that $g$ depends on $M$ but \emph{not} on $i \in \N$.  
If $R$ (or some localization of $R$ at an element of $R \setminus P$) is excellent and has finite Krull dimension, 
then we can choose $g \in W$ that depends on $M$ but \emph{not} on $i \in \Z$ such that \hyperref[main*]{$(*)$} 
is an isomorphism for all $i \in \Z$.

\item Hence, if $M$ is any finitely generated $R$-module, after localizing at one element $g \in W$, all of 
the local cohomology modules $H^i_P(M)$ are \ff.  That is, the local cohomology modules  
$H^i_P(M)$ are \wg \ff. More precisely,  after localization at one element of $W$,  the $R$-modules
$\Ann_{H^i_P(M)}P^n$, $n \in \N$, give an ascending filtration of $H^i_P(M)$ such that all of the factors are 
$A$-free of finite rank with $\bigcup_{n \in \N}\Ann_{H^i_P(M)} P^n = H^i_P(M)$.
\een
\end{theorem}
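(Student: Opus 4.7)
The plan is to derive part~(f) as a formal consequence of parts~(d) and~(e), with the remaining content being a bookkeeping argument that coalesces finitely many localizing elements into a single element of $W$. Since $M$ is finitely generated over the Noetherian ring $R$, only finitely many local cohomology modules $H_P^i(M)$ can be nonzero, so we need only control finitely many values of $i$ simultaneously.

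First, applying part~(e) to $M$ furnishes a single $g_0 \in W$ such that the natural isomorphism
\[
H_P^i(M)_{g_0} \arwf{\ \cong\ } \hom_R\bigl(\Ext_R^{h-i}(M,\,\om),\,E\bigr)_{g_0}
\]
holds for all $i \in \N$ at once. For each $i$ in the (finite) range where $H_P^i(M)$ can be nonzero, I would set $N_i := \Ext_R^{h-i}(M,\,\om)$, which is finitely generated, and then apply part~(d) to $N_i$: this yields $g_i \in W$ such that after inverting $g_i$ the ascending chain $\{\Ann_{\Hom_R(N_i, E)_{g_i}} P^n\}_{n \in \N}$ exhausts $\Hom_R(N_i, E)_{g_i}$ and has successive quotients that are finitely generated free $A_{g_i}$-modules.

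Next, I would merge all of these elements into a single $g \in W$. The image of the product $g_0 g_1 \cdots$ in the domain $A = R/P$ is nonzero, and by the standing hypothesis on $W$ there exists $g \in W$ with $g \equiv g_0 g_1 \cdots \pmod{P}$. All of the modules in sight are $P$-torsion: $H_P^i(M)$ trivially, and $\Hom_R(N_i,\,E)$ because $E$ is $P$-torsion and $N_i$ is finitely generated, so every $R$-linear map $N_i \to E$ has finitely generated image in $E$, hence is annihilated by a power of $P$. Remark~\ref{inv} therefore guarantees that the localizations at $g$ coincide with the localizations at the product $g_0 g_1 \cdots$, so both the isomorphism from part~(e) and the filtration conclusion from part~(d) remain valid after inverting $g$ alone. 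Since $R/P^n$ is finitely presented, the formation of $\Ann_{(-)} P^n \cong \hom_R(R/P^n,\,-)$ also commutes with localization, so these annihilator filtrations transport correctly.

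Finally, any $R$-module isomorphism $f : N \xrightarrow{\ \cong\ } N'$ automatically satisfies $f(\Ann_N P^n) = \Ann_{N'} P^n$; hence the isomorphism of part~(e) carries the filtration $\{\Ann_{\Hom_R(N_i, E)_g} P^n\}_n$ onto $\{\Ann_{H_P^i(M)_g} P^n\}_n$, and transports part~(d)'s conclusion to the desired filtration of $H_P^i(M)_g$, whose successive quotients are $A_g$-free of finite rank and whose union is all of $H_P^i(M)_g$ (the union being the whole module because $H_P^i(M)_g$ is $P$-torsion). The only real obstacle is the clerical merging of the finitely many $g_i$ into a single $g \in W$; the uniformly $P$-torsion character of every module involved, combined with Remark~\ref{inv}, is exactly what dissolves this difficulty.
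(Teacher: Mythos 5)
Your proposal only addresses part~(f), deriving it from parts~(d) and~(e) as given; you do not touch parts~(a)--(e) at all, and that is where essentially all of the substance of this theorem lives. The paper's proof of~(a)--(c) runs two dual spectral sequences for the double complex $\Hom_R(G_\bullet, \om \otimes_R \cC^\bullet) \cong \Hom_R(G_\bullet,\om)\otimes_R \cC^\bullet$ and kills terms on the relevant diagonals via dimension bounds after localization; the proof of~(d) hinges on graded generic freeness (Lemma~\ref{grgenfree}) applied to $\gr_P M$ together with the $\Ext^1$-vanishing from~(a); and the proof of~(e) is a separate Tor computation using the \Cech complex as a flat resolution of $H^h_{(\ux)}(R)$ together with Remark~\ref{natisom}. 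None of this appears in your write-up, so as a proof of the statement as a whole it has a genuine gap.

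That said, your treatment of the step~(d)$\,+\,$(e)$\Rightarrow$(f) is correct and is the same route the paper takes (the paper's line ``This is immediate from (c) and (e)'' is evidently a slip for ``(d) and (e)''). You add useful detail that the paper suppresses: that only the finite range $0\le i\le h$ matters after localization (since $P$ becomes a radical of an $h$-generated ideal, and $\Ext_R^{h-i}(M,\om)=0$ for $i>h$); that the localizing elements for the various $i$ can be merged; and that, because every module in sight is $P$-torsion, Remark~\ref{inv} lets one replace the product $g_0g_1\cdots$ by an element of $W$ with the same residue in $A$. This is exactly the bookkeeping the paper's proof begins with in its opening sentence (``By Remark~\ref{inv}, it suffices to assume $W=R\sm P$''). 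So your argument for~(f) is fine; what is missing is everything needed to make~(a)--(e) available in the first place.
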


\begin{proof}
By Remark~\ref{inv}, it suffices to assume $W = R \sm P$. 
In the course of the proof we may repeatedly, but finitely many times,  localize at one element  
$g \in W$. Each time, we make a change of terminology, and continue to use $R$ to denote
the resulting ring  $R_g$.  Likewise, we  use $P$ for its extension to $R_g$, and for every module under
consideration we use the same letter for the module after base change from $R$ to $R_g$ (finitely
generated modules under consideration  are replaced by their localizations at $g$: this is the same as base change from $R$ to $R_g$).

There exist $\ux:=\vect x h \in R$ whose images are a \sop\ for $R_P$, where they form a regular
sequence on $R_P$ and $\om_P$. After inverting an element of $W = R \sm P$ (and using $R$ to denote the resulting ring 
$R_g$, as agreed above), 
we may assume that $\sqrt{(\ux)} = P$ in $R$. Therefore, we have $E = H^h_P(\om) = H^h_{(\ux)}(\om)$.
For the same reason, we may assume throughout that $\vect x h$ is a regular sequence on $R$ and on $\om$ 
such that $\sqrt{(\ux)} = P$ in $R$,  and that $E = H^h_P(\om) = H^h_{(\ux)}(\om)$.

The proofs of (a), (b), and (c) depend on some results about the spectral sequences of
a double complex, which are collected for reference in ($\ddagger$) below. \medskip

{ {\leftskip 13pt 
\noindent {\it Discussion} ($\ddagger$). {\bf The spectral sequences of a double complex.} \phantomsection\label{ddref}
We only need the four standard facts listed below from the theory of spectral sequences for a double complex with 
bounded diagonals that relate the iterated cohomology (or second page, $\page_2$) of the double complex with associated 
graded modules of the cohomology $\cH^{\bullet}$ of the total complex. Let $k,\, v \in \Z$ be fixed integers.
\ben
\item The spectral sequence calculation commutes with flat base change, including localization. 

\item If the terms $\bE^{ij}_2=0$ for all $(i,j) \in \Z^2$ such that $i+j = k$, then $\cH^k = 0$. 

\item If the terms $\bE^{ij}_2 = 0$ for all $(i,j) \in \Z^2$ such that $j \not= \nu$ (respectively, for all $(i,j) \in \Z^2$ such that $i \not = \nu$),
then $\cH^{i+\nu} \cong \bE_2^{i,\nu}$ for all $i$ (respectively, $\cH^{\nu + j} \cong \bE_2^{\nu, j}$ for all $j$). 

\item If $\bE^{ij}_2 = 0$ for all $(i,j) \in \Z^2$ such that $k-1 \leq i+j \leq k+1$  except when $i =\nu$ or $j = k - \nu$,  
then $\cH^k \cong \bE^{\nu,\, k-\nu}_2$.
\footnote{We comment only on (4).
We have for $r \geq 2$ that  all of the spots  on the diagonal 
where $i+j = k$ are stable as the page index $r$ increases:  at spots where $i \not = \nu$, this holds
because $\bE^{i, k - i}_r = 0$ .   At the $(\nu, k-\nu)$ spot this holds because for $r \geq 2$ the 
graded component of  the differential $d^r$ mapping to (resp., from) $\bE^{\nu,\, k-\nu}_r$  
has domain (resp., target) on the diagonal of degree $k-1$ (resp., degree $k+1$) at a spot
that is {\it not} in the row or column of $(\nu, k-\nu)$, and so is 0.
Hence, there is only one possibly nonzero factor, namely $\bE^{\nu, k-\nu}_2$, in the filtration of $\cH^k$ 
one gets from the $\page_{\infty}$ page.} 
\een} }

In the proofs for (a), (b) and (c) below, using one spectral sequence we show that, \wg, the cohomology 
of the total complex  in degree  $i+h$ is $\ext_R^i(M, \, E)$.
Using the other spectral sequence we prove, in each of cases (a), (b) and (c), that $W$-generically 
there are zeros at a certain  spots of the second page, enabling us to reach the conclusion we need from 
one of the facts just above.

Let $\cC^{\bu}$ denote the usual modified \Cech complex used to calculate 
$H^\bu_P(R) \cong H^\bu_{(\ux)}(R)$, namely 
$$
0 \xra{\quad} R \xra{\quad} \cdots \xra{\quad} \bigoplus_{1 \le i_1 < i_2 < \cdots < i_j \le h} R_{x_{i_1} \cdots x_{i_j}} 
\xra{\quad} \cdots \xra{\quad} R_{x_1\cdots x_h} \xra{\quad} 0,
$$ 
where the typical term $\cC^j$ is flat over $R$.  Let $G_{\bu}$ be a free  resolution 
of $M$ by free  $R$-modules of finite rank. 
Consider the isomorphic (see Remark~\ref{natisom}) double complexes 
$$\hom_R(G_\bu, \om \otimes_R \cC^\bu) \cong \hom_R(G_\bu, \om) \otimes_R \cC^\bu.$$
We compute the cohomology of the total complex by using the two spectral sequences for iterated 
cohomology obtained from this double complex.  The typical module in the double complex is
$\hom_R(G_i, \om \otimes_R \cC^j) \cong \hom_R(G_i,\, \om) \otimes \cC^j$.  
If we fix $i$ and let $j$ vary, then because $\ux$ is a regular sequence on 
any finite direct sum of copies of $\om$, the cohomology vanishes except when $j = h$.  This shows that 
the spectral sequence degenerates, and that, \wg,  the cohomology of the  total complex in degree $h+i$ is  
$\cH^{h+i} \cong \ext_R^i\big(M,\, H^h_P(\om)\big)  = \ext_R^i(M, \, E)$. 
If we fix $j$ and let $i$ vary (noting that $\cC^j$ is flat), then we first get cohomology  
$\ext^i_R(M,\, \om) \otimes \cC^\bu$, 
while the iterated cohomology is $H^j_P\big(\Ext_R^i(M,\, \om)\bigr)$ on the second page. We denote 
pages of this second spectral sequence by $\page_r$, $r \geq 2$ throughout the rest of the proof
of parts (a), (b) and (c). In particular, explicitly, $\bE_2^{ij} \cong H^j_P\big(\Ext_R^i(M,\, \om)\bigr)$. 
Quite generally, this spectral sequence is $0$ for all the spots 
with  $j< 0$ or $j > h$ from the second page onward. We now prove the statements (a), (b) and (c), by 
proving that, in each case, certain of the modules $H^j_P\big(\Ext_R^i(M,\, \om)\bigr)$ vanish generically 
for a relevant set of pairs $(i,\,j)$.

(a) Replacing $M$ by a suitable module of syzygies,\footnote{This is not necessary, but makes the argument
a bit easier to follow.} we see that it suffices to prove the case $i = 1$.
By Discussion~\hyperref[ddref]{$(\ddagger)$(2)}, to show that $\Ext_R^1(M,\, E)$ vanishes after 
localization at one element  $g \in W$, it suffices to prove that there exists $g \in W$ such that
$H^j_P\big(\ext_R^i(M, \om)\bigr)_g = 0$ when 
$i + j = h+1$ and $0 \leq j \leq h$. Thus, we want to show that 
$H^j_P\big(\ext_R^{h+1-j}(M, \om)\bigr)$ vanishes \wg for $0 \leq j \leq h$,
which proves that the spectral sequence stabilizes at $0$ on 
those spots from the second page onward. When $j = 0$, this holds because 
$\injdim_{R_P}(\om_P) = \di(R_P) = h$ and so $\ext_R^{h+1-0}(M,\, \om)$ becomes $0$ after localizing 
at one  element of $W = R \setminus P$.  
Assume $1 \leq j \leq h$. Let $I = \Ann_R \big(\ext_R^{h+1-j}(M, \om)\big)$.  By localizing at
one element of $W$, we may assume that $I$ is contained in $P$. Next note that $\height(P/I) \le  j-1$, 
because this statement is unaffected by localization at $P$, and, from Discussion~\ref{recap}(8), we see that
$\height(P_P/I_P) = \di(R_P/IR_P)  = \di\bigl(\ext_{R_P}^{h+1-j}(M_P, \,\om_P) \bigr) \le j-1$.
Thus,  $PR_P/IR_P$ has a  system of parameters consisting of  images of elements of $R$ 
with at most $j-1$ elements. Consequently, after  localizing at one element of $W$, we have that $P/I$ is 
the radical of an ideal $(\uz) = (\vect z {j-1})$, with at most $j-1$ generators. But then, for 
modules killed by $I$,  $H^j_P(\blank) \cong  H^j_{P/I}(\blank) \cong H^j_{\uz}(\blank) \cong 0$. 

Thus, the $\page_2$ page degenerates to $0$ for all $(i,\,j)$ spots with $i+j = h+1$. 
It follows at once from \hyperref[ddref]{$(\ddagger)$(2)} that 
$\Ext^1_R\big(M, E\big)_g = 0$ as required and, hence, that for fixed $i \geq 1$,
 $\Ext^i_R\big(M, E\big)_g = 0$ for suitable $g$.
Now it is straightforward to prove the remaining claims on the $W$-generic 
exactness of  $\hom_R(\blank,\,E)$ on short exact sequences of finitely generated $R$-modules. 
This concludes the proof of (a).

(b) First note that if $M_P=0$, we may localize at one element of $W= R \setminus P$ and assume
that $M =0$, in which case the result is obvious. Otherwise, after we localize at one element of $W$, we may
assume that the radical of the annihilator of $M$ is $P$. Hence $H^0_P\bigl(\Ext_R^{i}(M,\, \om)\bigr) 
\cong \Ext_R^{i}(M,\, \om)$ and $H^j_P\bigl(\Ext_R^{i}(M,\, \om)\bigr) = 0$ for all $i \in \N$ and all $j \not=0$, 
since a power of $P$ kills $\Ext_R^{i}(M,\, \om)$.

In light of this, we see that  $\page_2$ degenerates:  all the nonzero terms can only occur for 
$j=0$ and are simply the modules $\Ext^i_R(M, \, \om)$. By \hyperref[ddref]{$(\ddagger)$(3)}, 
the module $\Ext^i_R(M, \, \om)$ 
is the cohomology $\cH^i$ of the total complex in degree $i$. In particular, when $i =h$,  we have that, \wg, 
$\Ext^h_R(M, \, \om) \cong \cH^h \cong \Hom_R(M,\,E)$. 
 
Next, let $N:=\hom_R(R/P, \, E) \cong \Ann_EP$. By the first statement in part~(b), proved above, 
we know that, $W$-generically, $N \cong \ext_R^h(R/P, \, \om)$ is 
finitely generated.  After localization at $P$, we have $N_P \cong \kappa_P =
\fra{A}$.  We  may choose an element $u \in \hom_R(R/P, \, E)$ that becomes a generator this module
once we localize at $P$. Hence, the linear map $\theta:A \to  \hom_R(R/P, \, E)$ such that $1 \mapsto u$ becomes
an isomorphism from $\kappa_P$ to $\kappa_P$ when we localize at $P$. Since the kernel and cokernel of
$\theta$ are finitely generated $R$-modules, they both vanish after localization at some $g \in W$, which 
then makes $\theta$ an isomorphism. This concludes the proof of part~(b).

(c) Note that, \wg, $\ext_R^0(\om,\,\om) = \hom_R(\om,\,\om) \cong R$ (cf.~Remark~\ref{rmk:not}) 
and $\ext_R^i(\om,\,\om) = 0$ for all $1 \le i \le h+1$ (cf.~Discussion~\ref{recap}(9)). 
We apply \hyperref[ddref]{$(\ddagger)$(4)} to $\page_2$ with $M = \om$, $k = h$ and $\nu = 0$ to conclude that \wg 
we have $\hom_R(\om,\, E) \cong \cH^h \cong H^{h-0}_P\big(\ext^{0}(\om, \, \om)\bigr) \cong H_P^h(R)$, as the 
nonzero terms $\bE_2^{ij}$ on the diagonals corresponding to total degrees $h-1$, $h$ and $h+1$ can only 
occur when $i = 0$. 

(d) First note that, for all $s \in \N$, we have following series of isomorphisms 
(in which the second isomorphism is by the adjointness of $\otimes$ and $\Hom$) 
\begin{align*}
\Ann_{\Hom_R(M,\,E)} P^s &\cong \hom_R\big(R/P^s,\,\Hom_R(M,\,E)\big)\\
&\cong \hom_R\big((R/P^s) \otimes_R M, \, E\big) \cong \hom_R(M/P^sM, \, E).
\end{align*}
Let $N_s := \hom_R(M/P^sM, \, E)$, for $s \in \N$.
It suffices to show that  there exists $g \in W$ such that for all $s \in \N$,  $(N_{s+1}/N_s)_g$
is $A_g$-free of finite rank.  Given what have been proved, we can localize at $g \in W$ so that
\ben[label=(\roman*)]
\item $\hom_R(A,\, E) \cong A$ and $\Ext_R^1(A,\, E) = 0$, by parts~(a) and (b) above. 
Therefore, $\hom_R(A^{\oplus k},\, E) \cong A^{\oplus k}$ and $\Ext^1_R(A^{\oplus k}, \,E) = 0$ for all $k \in \N$. 
\item For all $s \in \N$, the graded component $[\gr_P M]_s = P^sM/P^{s+1}M$ is $A$-free of finite rank,
by Lemma~\ref{grgenfree}. Consequently, for each $s \in \N$, $M/P^{s}M$ admits a finite filtration whose factors are free 
$A$-modules of finite rank.
\item In light of (i) and (ii) above, we see that $\Ext_R^1(P^sM/P^{s+1}M,\, E) = 0$ for all $s \in \N$. Moreover, 
a straightforward induction on the length of the filtration, as well as the long exact sequence for $\Ext$, shows that 
$\Ext_R^1(M/P^{s}M,\, E) = 0$ for all $s \in \N$.  
\item Similarly, by (i) and (ii) above, we see that $\hom_R(P^sM/P^{s+1}M,\, E)$ is $A$-free of finite rank for 
all $s \in \N$.
\een
We show that, under the conditions (i)--(iv), all of the modules $N_s/N_{s+1}$ are $A$-free of finite rank.
For all $s \in \N$, we have an exact sequence
$$ 0 \xra{\quad} P^sM/P^{s+1}M \xra{\quad} M/P^{s+1}M \xra{\quad} M/P^sM \xra{\quad} 0.$$
Additionally, as $\Ext_R^1(M/P^{s}M,\, E) = 0$ by (iii) above, we apply $\hom_R(\blank, \, E)$ to obtain the 
following exact sequence:
$$0 \xra{\quad} N_s \xra{\quad} N_{s+1} \xra{\quad} \hom_R(P^sM/P^{s+1}M, E) \xra{\quad} 0.$$
To complete the proof of (d), we observe that $\hom_R(P^sM/P^{s+1}M,\, E)$ is $A$-free of finite rank, by (iv) 
above.  Also note that $\bigcup_{s \in \N}\Ann_{\Hom_R(M,\,E)} P^s = \Hom_R(M,\,E)$, 
as every element of $\Hom_R(M,\,E)$ is annihilated by a power of $P$.

(e) Let $G_\bu$ denote a projective resolution of $M$ by finitely generated modules and 
let $\cC^\bu$ be the modified \Cech complex for $\ux$.
Then $H^i\big(\hom(G_\bu, \om)\big) \cong \ext_R^i(M, \om) $ and 
$H^{i}_P(M) \cong H^{i}_{(\ux)}(M) \cong H^{i}(M \otimes \cC^\bu)$.
Since, \wg, $\vect x h$ is a regular sequence with radical $P$ in $R$, we may take $\cC^\bu$ 
as a flat resolution of $H^h_{(\ux)}(R)$ and use it to calculate 
the Tor. Note, however, that $\cC^\bu$ is numbered for calculating cohomology. 
Therefore, for fixed $M$ and any finite set of choices for $i \in \Z$,  \wg\ we have
\begin{align*}
H^{i}_P(M) &\cong H^{i}_{(\ux)}(M) \cong H^{i}(M \otimes \cC^\bu) \\
&\cong \tor^{R}_{h-i}\big(M, \, H^h_P(R)\big) \cong H_{h-i}\big(G_{\bu} \otimes_R H^h_P(R)\big) \\
&\cong H_{h-i}\Big(G_{\bu} \otimes_R \hom_R\big(\om,\, H^h_P(\om)\big)\Big) &&\text{(by part~(d))}\\
&\cong H_{h-i}\Big(\hom_{R}\big(\hom_{R}(G_{\bu}, \om),\, H^h_P(\om)\big)\Big) &&
\text{(by Remark~\ref{natisom})}\\
&\overset{\alpha}{\cong} \hom_R\Big(H^{h-i}\big(\hom_R(G_\bu, \om)\big),\, H^h_P(\om)\Big) &&
\text{(by part~(a))}\\
&\cong \hom_R\big(\ext_R^{h-i}(M,\om),\, H^h_P(\om)\big).
\end{align*}
Here we would like to point out that, 
in order to see the isomorphism $\overset{\alpha}{\cong}$, we need 
$\hom_R\big(\blank,\,H^h_P(\om)\big)$ to preserve the exactness of several (but finitely many) short exact sequences, 
which can be achieved by repeated application of part~(a).

Next, we observe that the usual homotopy arguments show that 
the identification $$\hom_R\big(\ext_R^i(M,\om), H^h_P(\om)\big) \cong \tor^{R}_i\big(M, \, H^h_P(R)\big)$$
is independent of the choice of projective resolution $G_\bu$ for $M$, which is needed to see that
this is a natural isomorphism of functors on the variable module $M$.  

It remains to explain the statements in which $i$ is allowed to take on infinitely many values.
We already know that we may allow $0 \leq i \leq h$ or any larger \emph{finite} set of choices for $i$.
But since we have, \wg, that $P = \sqrt{(\vect x h)R}$, the local cohomology modules $H^{i}_P(M)$ vanish when
$i > h$, as do the modules involving $\Ext^{h-i}_R(M, \, \om)$ since $h - i < 0$. Thus, we have proved
that for fixed $M$, the result holds \wg for all integers $i \geq 0$.

For $i < 0$, the local cohomology modules $H^{i}_P(M)$ vanish.  
To complete the proof of (e), it remains only to prove that under the hypothesis that  $R$ or
$R_{g}$, for some $g \in R\setminus P$, is excellent of  finite Krull dimension, say $k$, 
we have for some $g' \in W$ and for all $j > k$ that
$$\Hom_{R_{g'}}\bigl(\Ext_{R_{g'}}^j(M_{g'}, \om_{g'}), \, E_{g'}\bigr) = 0,$$
since there are only finitely many integers $i$ such that $h+1 \leq h-i \leq k$.  
Consequently, it suffices to show that for some choice of $g' \in W$, we have
$\Ext_{R_{g'}}^j(M_{g'}, \om_{g'}) = 0$ for all $j > k$. For this it is enough to show that
the injective dimension of $\om_{g'}$ is at most $k$ for some $g' \in W$. This is true by 
Remark~\ref{open}, because $\om_Q$ is a canonical module for $R_Q$ for all primes 
$Q$ in a sufficiently small Zariski neighborhood of $P$. 

(f) This is immediate from (c) and (e). The proof is complete.
\end{proof}

\begin{remark}\label{quot}  Just as the usual form of local duality may be applied to homomorphic 
images of Gorenstein rings or to homomorphic images $\ov{R}$ of Cohen-Macaulay rings $R$ such 
that $R$ has a canonical module, it should be clear that the results of Theorem~\ref{main} can be applied 
to modules over a homomorphic image $\ov{R}$ of a ring $R$ satisfying the hypothesis of Theorem~\ref{main}. 
One can consider the modules over $\ov{R}$ as $R$-modules. Both the results in the next subsection and 
Theorem~\ref{hulls} provide an illustration of this technique.  \end{remark}

\begin{remark}\label{vanG}  There are many finitely generated modules $M$ for which the conclusion 
in the last sentence of part (e) of Theorem~\ref{main}, where $i$ is allowed to take on all values in $\Z$, 
holds on a sufficiently small Zariski neighborhood $D(g)$ of $P$ without any additional hypothesis on $R$.
This holds, for example, if $M_g$ has finite projective dimension over $R_g$ for some $g \in R \sm P$, 
which is equivalent to the statement that $M_P$ has finite projective dimension over $R_P$.
\end{remark}

\begin{remark}\label{avoid2} In results such as Theorem~\ref{main}, by Remark~\ref{avoid} 
the choice of $g$ can be made in $R\sm P$ such that $g$ also avoids finitely many primes not containing $P$.
\end{remark}

\begin{remark}\label{Lyu-Ch2} Using the results of an earlier version of this paper, S.~Lyu \cite[A.1.7, A.1.8]{Lyu25} has generalized Theorem~\ref{main} by assuming $\om$ to be a complex with \fg cohomology modules such that $\om_P$ is a dualizing complex of $R_P$.
\end{remark}

\section{Base change}\label{sec-basechange}  

\subsection{Base change for local cohomology and taking sections over an open set}\label{subsec-bc}  

We prove here several results on base change for local cohomology and for taking sections over an open set 
over a Noetherian ring $B$ that maps to  $R$, nearly all of which are, in essence, corollaries of Theorem~\ref{main}(f). 
However, some additional work is needed.

\begin{discussion}\label{base} We introduce some convenient notations and conventions, and
record some remarks about them.
\ben

\item If $S$ is a ring, we let $S\0$ denote the elements not in any minimal prime of $S$. \smallskip 

\item Fix an ideal $I \inc R$. Let $\Min(I)$ be the set of minimal primes of $I \inc R$ (also 
denoted by $\Min_R(R/I)$ in the literature). 
Let $W_I \subseteq  R \sm \big(\bigcup_{P \in \Min(I)}P\big)$ be a subset such that $W_I$ maps onto
$(R/I)\0$ under the natural map $R \to R/I$. 
This corresponds to our previous $W$ in \eqref{not} when $I = P$ is prime.
To study $H_I^i(M)$, we may assume that $\sqrt I = I$, and we explicitly do so as needed.  
\smallskip 

\item When we say {\it after localizing at one $w \in W_I$},
we typically make a statement in which $R$, $I$, $P$ and some given modules, e.g.,~$M$, have been replaced
by their localizations at the element $w$, but we continue to denote them by $R$, $I$, $P$, $M$, and
so forth. \smallskip  

\item By prime avoidance, we see $\left({\bigcap}_{\substack {P \neq P'\\ P,\,P' \in \Min(I)}} \big(P+P'\big)\right)  \sm 
\left(\bigcup_{P \in \Min(I)} P\right) \neq \emptyset$.
Consequently, $W_I \supseteq W_I':= W_I \cap  \left({\bigcap}_{\substack {P \neq P'\\ P,\,P' \in \Min(I)}} 
\big(P+P'\big)\right) \neq \emptyset$.  
Assume, as we may, that  $\sqrt I = I$. After localizing at one element $w \in W_I'$,  we have that, for all 
$n \in \N$,  $R/I^n \cong \prod_{P \in \Min(I)} R/P^n$ by the Chinese remainder theorem, which then 
implies $H^i_I(M) \cong \bigoplus_{P \in \Min(I)} H^i_P(M)$ for all $i \in\Z$.  \smallskip 

\item  If $H$ is any $R$-module in which every element is killed by a power of $I$, and if $g,\, g' \in R$ such that $g \equiv g' \mod I$,
then, by Remark~\ref{inv}, $H_g \cong H_{g'}$.  Consequently, in working with 
statements about the desired behavior of such modules $H$ after localizing 
at an element $g$ of $W_I$,   we are in the same position as if all elements of 
$R \sm \left(\bigcup_{P \in \Min(I)} P\right)$ were available as choices for $g$.     

\item By part~(5) above and Remark~\ref{avoid}, if we take $W_I = R \sm \big(\bigcup_{P \in \Min(I)}P\big)$ 
in Theorems~\ref{bc-reduced} and \ref{bc0-reduced}, then the element $g$ can be chosen in $W_I \cap R\0$. 
\een
\end{discussion} 
 
From Theorem~\ref{main}  and Discussion~\ref{base} part~(4) we have at once:

\begin{lemma}\label{support-I} Assume that \eqref{not} applies to all minimal primes of $I$. Then, we may 
localize at one element $w \in W_I$, such that for all $i \in \Z$, $H_I^i(M)$ is a finite direct sum of modules each 
of which is \ff for some $P \in \Min(I)$.
\end{lemma}

\begin{notation}\label{not.B} In \eqref{bc}, \eqref{bc-reduced} and 
\eqref{bc0-reduced}, we use the following notation: $B \to R$ is a homomorphism of 
Noetherian rings,  $I$ is an ideal of $R$,  $W_I$ is a subset of $R$ as described in (\hyperref[base]{\ref*{base} part (2)})
and $M$ is a finitely generated $R$-module. 
\end{notation} 

We note the following.  

\begin{remark} \label{flat-MandH} Let $B$ be a ring and let $(G^\bu, d^\bu)$ be any complex of $B$-modules 
\[
\tag{$G^\bu,\, d^\bu$}  \cdots \xra{\mathmakebox[20pt][c]{d^{i-1}}} G^i \xra{\mathmakebox[20pt][c]{d^{i}}} \ \cdots \  
\xra{\mathmakebox[20pt][c]{d^{n-1}}} G^{n} \xra{\mathmakebox[20pt][c]{d^{n}}} 0 \xra{\mathmakebox[20pt][c]{}} \cdots
\]
such that $G^i = 0$ for all $i > n$, for some given $n \in \Z$.  
Let $Z^i := \Ker(d^i)$ and $N^i := \im(d^{i-1})$, so that $H^i := H^i(G^\bu) =
Z^i/N^i$,  for all $i \in \Z$. Assume that the modules $G^i$ and $H^i$ are $B$-flat for all $i \in \Z$. Then:
\benn 
\item The modules $Z^i$ and $N^i$ are $B$-flat, for all $i \in \Z$.   
\item For any $B$-module $L$ and for all $i \in \Z$,  we have $H^i(G^\bu \otimes_B L) \cong H^i(G^\bu) \otimes_B L$,  i.e.,
the calculation of cohomology of $G^\bu$ commutes with base change to $L$.  
\een
Part (1) follows easily via reverse induction on $i$, starting with $i = n$, and the short exact sequences
$(\dagger) \ 0 \to N^i \to Z^i \to H^i \to 0$ and $(\ddagger)\ 0 \to Z^{i-1} \to G^{i-1} \to N^{i} \to 0$,  for $i \in \Z$.
In (2), since all of the modules $G^i,\, H^i, \, Z^i, \, N^i$ are $B$-flat from (1), the base change result is immediate from the short exact
sequences $(\dagger)$ and $(\ddagger)$.
\end{remark} 

\begin{discussion}\label{Cech}
Let $B \to R$, $M$ and $I$ be as in \eqref{not.B}. Let $\ux :=\vect x h$ generate $I$.  Let 
$X = \Spec(R)$, and $U := X \sm V(I)$. 
Consider the \Cech complex $\ccC^{\bu}(\ux^\infty; \, M)$ and the modified \Cech complex 
$\cC^{\bu}(\ux^\infty; \, M)$,  which are, explicitly,
\begin{align*}
\ccC^{\bu}(\ux^\infty; \, M): &&&0 \to \bigoplus_{1 \leq i \leq h} M_{x_{i}}\to \cdots \to \bigoplus_{1 \le i_1 < i_2 < \cdots < i_j \le h} M_{x_{i_1} \cdots x_{i_j}}  \to \cdots \to M_{x_1\cdots x_h} \to 0,\\
\cC^{\bu}(\ux^\infty; \, M): &&&0 \to M \to \cdots \to \bigoplus_{1 \le i_1 < i_2 < \cdots < i_j \le h} M_{x_{i_1} \cdots x_{i_j}} 
\to \cdots \to M_{x_1\cdots x_h} \to 0
\end{align*}
in which the typical module $\displaystyle\bigoplus_{1 \le i_1 < i_2 < \cdots < i_j \le h} M_{x_{i_1} \cdots x_{i_j}}$ is 
numbered as $\ccC^{j-1}(\ux\8; \, M)$ or as $\cC^{j}(\ux\8; \, M)$. In the notation of 
\cite[Notation, p.\ 649]{Rot09} or \cite[1.2.8, p.\ 9]{Weib94} 
for shifting degree in a complex, $\ccC^{\bu}(\ux\8; \, M)$ is $\cC^{\bu}(\ux\8;\,M)[1]$  brutally truncated, 
in the language of \cite[1.2.7,$\,$p.$\,$9]{Weib94}, by killing the degree $-1$ term, which is $M$.
   
Of course, $H^i_I(M)$ is the $i\,$th cohomology module of $\cC^\bu(\ux\8;\,M)$. 
In the sequel, let $\wcH^i_I(M)$ denote the $i\,$th cohomology module of $\ccC^\bu(\ux\8;\,M)$. 
Note that $\wcH^0_I(M)$ is the module of global sections of the sheaf $M^\sim$ on the open set $U$, 
often denoted $\Gamma(U,\, M^\sim)$. If $\cF$  is the restriction of $M^\sim$ to $U$, then
$$\wcH^i_I(M) = H^i\big(\ccC^\bu(\ux\8;\,M)\big) \cong H^i(U,\, \cF).$$  
Hence, for $i \geq 1$,  $\wcH^i_I(M) = H^i(U, \cF) \cong H^{i+1}_I(M)$. 

Scheme-theoretically, if $j:U \inj X$ is the open immersion
and $f: X \to \Spec(B)$ is $\Spec(B \to R)$,  then we may also write  $\wcH^i_I(M) = H^i\big(\ccC^\bu(\ux;\,M)\big) = 
f_*(R^ij_*(\cF))$.  We will need the latter notation when we extend the results on base change to schemes.
\end{discussion}

The following is important in proving our main results on base change, 
Theorem~\ref{bc-reduced} and Theorem~\ref{bc0-reduced}.

\begin{theorem}[Base change via flattening]\label{bc} Let notation be as in \eqref{not.B}. 
Suppose that  that after localizing at one element $b \in B$ and one element $g \in R$, 
the modules $M_{bg}$ and $H^i_I(M)_{bg}$, for all $i \in \Z$, are $B_b$-flat.
Then we have that:\\
$(\natural)$\phantomsection\label{bc-natural} For all $B_b$-modules or algebras $L$,  for all $i \in \Z$, 
$H^i_I(M_{bg} \otimes_{B_b} L) \cong H^i_I(M)_{bg} \otimes_{B_b} L$.%
\footnote{If $L$ is a $B$-module or a $B$-algebra,  
we may apply $\blank \otimes_B L$ and the base change result still holds, because the other module 
represented by $\blank$ is already a $B_b$-module.}\\
$(\natural_0)$\phantomsection\label{bc-natural-0} For all $B_b$-modules or algebras $L$, 
$\wcH^0_I\big(M_{bg} \otimes_{B_b} L\big)
\cong \wcH^0_I\big(M_{bg}\big) \otimes_{B_b} L$. 
\end{theorem}

\begin{proof} By replacing the $B$-algebra $R$ with the $B_b$-algebra $R_{bg}$, 
we may assume that the modules $M$ and $H^i_I(M)$, for all $i \in \Z$, are all $B$-flat. 

Consider ${\cC}^\bu(\ux\8; \, M)$ and ${\ccC}^\bu(\ux\8; \, M)$ as in \eqref{Cech},
so that $H^j_I(M) \cong H^{j}\big({\cC}^\bu(\ux\8; \, M)\big)$ and 
$\wcH^j_I(M) \cong H^{j}\big({\ccC}^\bu(\ux\8; \, M)\big)$ for all $j \in \Z$. 
Since $M$ and $H^j_I(M)$ are $B$-flat for all $j \in \Z$, \hyperref[bc-natural]{$(\natural)$} follows from an application of 
Remark~\ref{flat-MandH} to the complex ${\cC}^\bu(\ux\8; \, M)$. Moreover, by Remark~\ref{flat-MandH}(1), all the modules 
and cohomology modules of ${\ccC}^\bu(\ux\8; \, M)$ are $B$-flat. Thus, Remark~\ref{flat-MandH}(2), applied to the 
complex ${\ccC}^\bu(\ux\8; \, M)$, verifies \hyperref[bc-natural-0]{$(\natural_0)$}.
\end{proof} 

Note that in the following result we do not need to make any assumptions about existence of a canonical
module and so forth. Rather, we reduce to a case where the needed assumptions hold in order for us to apply Theorem~\ref{bc}. 

\begin{theorem}[Base change for local cohomology]\label{bc-reduced}
Let $B \to R$, $M$ and $R$ be as in Notation~\ref{not.B}.  Further assume that $B$ is reduced and that, 
after localization at one element $b' \in B\0$ and at one element $g' \in W_I$, 
the ring $(R/I)_{b'g'}$ is \fg as a $B_{b'}$-algebra.
Then after we localize at one element $b \in B\0$ and at one element $g \in W_I$,  
the base change result \hyperref[bc-natural]{$(\natural)$} holds.

Moreover, if $(R/I)_{b'}$ is module-finite over $B_{b'}$ for some $b' \in B \0$, 
then after we localize at one element $b \in B\0$, for all $B_b$-modules or algebras $L$, for all $i \in \Z$, 
we have $H^i_I(M_{b} \otimes_{B_b} L) \cong H^i_I(M)_{b} \otimes_{B_b} L$. 
\end{theorem}

\begin{remark} In this result, if $J:= \Ann_R M$, we may replace the condition that $(R/I)_{b'g'}$ be
\fg as a $B_{b'}$-algebra (respectively, in the second paragraph, that $(R/I)_{b'}$ be module-finite over $B_{b'}$)
by imposing the appropriate finite generation condition on $R/(I+J)$ instead, which is weaker.  
One simply replaces $R$ by $R/J$ in the proof. \end{remark}

\begin{proof}[Proof of Theorem~\ref{bc-reduced}] 
After changing notation, we may assume that $R/I$ is \fg as a $B$-algebra. 
By localizing at an element of $B\0$, and by change of notation, $B$ is a product of domains. 
Hence, we may assume that $B$ is a domain.
Also, after localizing at an element of $B\0$, we may assume that $B \inj R/P$ for all $P \in \Min(I)$. 
By Lemma~\ref{grgenfree}, we may localize at one element of $B\0$ and assume that all homogeneous
components of $\gr_I(M)$, as well as $R/P$ for all $P \in \Min(I)$, are $B$-free. 
(The conclusion that $R/P$ is generically $B$-free follows even from the classic Grothendieck version of 
generic freeness \cite[\S6.9]{EGAIV65}.) Next, replace $R$ and $M$ by their 
respective $I$-adic completions and replace $W_I$ by its natural image in the $I$-adic completion of $R$.  
This does not change $\gr_I(M)$, $R/P$ for all $P \in \Min(I)$, nor any of 
the local cohomology modules of $M$ with support in $I$.  Change notation and assume that
$R$ and $M$ are $I$-adically complete. By Proposition~\ref{B-flat}, $M$ is $B$-flat.  

Suppose that $R/I$ requires $s$ generators, say $\uu:= \vect us$, over $B$ and that
the ideal $I \inc R$ requires $k$ generators, say $\uz:=\vect zk$, over $R$. Let $\uU := \vect U s$ and $\uZ := \vect Z k$ be 
indeterminates over $B$.   Then we have a $B$-algebra map $\cR: = B[\uU][\kern-.1em[\uZ]\kern-.1em] \to R$ 
such that  $\uU \mapsto \uu$ and $\uZ \mapsto \uz$, which is surjective.\footnote{Every $\wh{v} \in \wh{R}^I$ is  
a formal sum $\sum_{t=0}^\infty v_t$ with $v_t \in I^t$,  
and since each $(\uZ)^t/(\uZ)^{t+1} \to I^t/I^{t+1}$  is onto, the partial sums of the series lift, 
recursively, to a power series in $\cR$ that maps to $\wh{v}$.}  
Let $\cI := \Ker\big(\cR \surj R \surj R/I\big)$ and $\cP := \Ker\big(\cR \surj R \surj R/P\big)$ for each $P \in \Min(I)$.  
Clearly, we have $\cR/\cI \cong R/I$, $\Min(\cI) = \{\cP \mid P \in \Min(I)\}$, $\cR/\cP \cong R/P$ and $\cR_{\cP} \surj R_P$.

Note that $\cR_{\cP}$ is regular (hence admits a canonical module) for each $\cP \in \Min(\cI)$, 
since we may test this after killing the regular sequence $\uZ$, and the result is a local ring of the regular ring 
$\fra(B)[\uU]$.  
As noted above, $M$ and $\cR/\cP \cong R/P$, for $\cP \in \Min(\cI)$, are all flat over $B$. 
By Lemma~\ref{support-I}, there exists $g \in W_I$ such that the modules 
$H^i_\cI(M)_{g}$ are finite direct sum of modules each of which is free filterable relative to $\cP\cR_g$, for some 
$\cP \in \Min(\cI)$, over $\cR_g$.
Thus, for all $i \in \Z$, $H^i_\cI(M)_g$ are flat over $B$.  We change notation and assume that we have localized 
so that the modules $M$ and $H^i_I(M) \cong H^i_\cI(M)$, for all $i \in \Z$, are $B$-flat.
Therefore, we can apply Theorem~\ref{bc} above to obtain \hyperref[bc-natural]{$(\natural)$}. 

In the final statement, after a change of notation, we may assume that $R/I$ is module-finite over $B$. 
Since $R/I$ is algebraic over the image of $B$, every $w \in W_I$ has a 
multiple in $R/I$ that is in the image of $B\0$. Thus, instead of localization at an element 
$w \in W_I$, it suffices to localize at an element in $B\0$. 
\end{proof}

Next, we prove base change results for the module of sections of the sheaf $M^\sim$ over
an open set $U := \Spec(R) \sm V(I)$, which is $\Gamma(U, M^\sim)$.  This is the $0\,$th cohomology 
of a \Cech complex, such as $\wcH^0_I(M)$ in Discussion~\ref{Cech}.  
Note that the final statement of the next theorem has been observed in \cite[the paragraph after Remark~1.2]{Sm18}.

\begin{theorem}[Base change for sections over an open set]\label{bc0-reduced} Let $B \to R$, $M$ and $R$ be as in 
Notation~\ref{not.B}.  Further assume that $B$ is reduced and that, after localizing at an element $b' \in B\0$ and 
an element $g' \in W_I$, the ring $R_{b'g'}$ is \fg as a $B_{b'}$-algebra. Then after we localize at one element $b \in B\0$ 
and one element $g \in W_I$, for every $B_b$-module or algebra $L$, the base change 
result \hyperref[bc-natural-0]{$(\natural_0)$} holds.

Moreover, if the ring $R_{b'}$ is \fg over $B_{b'}$ and $(R/I)_{b'}$ is module-finite over $B_{b'}$ for some $b' \in B\0$, 
then after we localize at one element $b \in B\0$, for every $B_b$-module or algebra $L$, 
$\wcH^0_I\big(M_{b} \otimes_{B_b} L\big) \cong \wcH^0_I\big(M_{b}\big) \otimes_{B_b} L$. See \cite{Sm18}.
\end{theorem}

\begin{remark}\label{bc0-reduced-rmk}  As usual, we may replace $R$ by $R/\Ann_R(M)$ in utilizing this theorem.  
\end{remark}

\begin{proof}[Proof of Theorem~\ref{bc0-reduced}] As in the proof of Theorem~\ref{bc-reduced}, by a change of notation, 
we may assume that $R$ is \fg as a $B$-algebra and that $B$ is a domain.
By Grothendieck's theorem on generic freeness \cite[\S6.9]{EGAIV65} or 
Lemma~\ref{grgenfree}, and as in the proof of Theorem~\ref{bc-reduced}, after localizing at one element $b \in B\0$ 
and one element $g \in W_I$,   
the modules $M_{bg}$ and $H^i_I(M)_{bg}$, for all $i \in \Z$, are $B_b$-flat. 
Now the base change result \hyperref[bc-natural-0]{$(\natural_0)$} follows immediately from Theorem~\ref{bc},
while the final statement follows exactly as in the last paragraph of the proof of Theorem~\ref{bc-reduced}.   
\end{proof}

We note the example from \cite[Remark 1.2]{Sm18} is such that the module of global sections cannot
be made $A$-free, where $A = \Z$.  However, it is $A$-flat.  

\subsection{Base change results for schemes} 

We conclude \S\ref{sec-basechange} with Theorem~\ref{bc-geom},
which is a geometric form of Theorems~\ref{bc-reduced} and \ref{bc0-reduced}. 
Here we adopt, insofar as possible, the same notation as in \cite[Theorem~1.1]{Sm18}. 
Theorem~\ref{bc-geom} generalizes the main result of \cite{Sm18}, in which $Z \to S$ is required to be finite. 
We require instead that $Z \to S$ be an affine morphism of finite type. In the result
of \cite{Sm18},  $S$ is replaced by base change to a dense open subscheme $S\0$. Here, in addition to this,
we also need to restrict to an open subscheme of $X$. 

\begin{notation}\label{2b-c}  Let $X \arwf{f} S$ be a morphism of Noetherian schemes, with $S$ reduced. 
Let $Z \inc X$ be a closed subscheme, and let $\cF$ be a coherent sheaf on $X\sm Z$. Assume as
well that $Z \to S$ is an affine morphism of finite type. 

By a {\it doubly generic base change} for this data we mean \emph{both} of the following steps:
\ben
\item A base change from $S$ to a dense open subscheme  $S\0$, replacing  $X, \, Z$, and $\cF$  by 
$X\0 := S\0 \times_S X$ and $Z\0 := S\0 \times_S Z$, and $\cF$ by $\cF\0 := S\0 \times_S \cF$. 

\item The replacement of  $X\0$ by a dense open subscheme ${X\oh}$ that meets every irreducible component $C$  of 
$Z\0$ in a dense open subset of $C$, along with the replacement of
$Z\0$ by $Z\oh := X\oh \cap Z\0$ and of $\cF\0$ by its restriction to $X\oh$.  
\een
\end{notation}

\begin{theorem}\label{bc-geom} Let $X \arwf{f} S$, $Z$, and $\cF$ be as in Notation~\ref{2b-c}. 
\bena
\item Then there is a doubly generic base change such that, if we keep our original notation after this base change,
with $U := X\sm Z$,   if $j:U \inj X$ denotes the open immersion, then for all $r \geq 1$ the sheaves  $f_*(R^rj_*\cF)$ 
are locally free over $S$, and their calculation commutes with base change. 

\item If $X$ itself is of finite type over $S$, then the conclusion of part (a) holds for all $r \geq 0$. 

\item If, in addition to (a) (respectively to (b)), the morphism  $Z \to S$ is, locally on a dense open set in $S$, a finite morphism, 
one may take $S\0$ such that $X\oh = X\0 = S\0 \times_S X$,  so that only the base change on $S$ described
in Notation~\ref{2b-c}(1) is needed. 

\een
\end{theorem}

\begin{proof} (a) One can evidently make an initial choice of  $S\0$ such that it is the disjoint union of its irreducible 
components,  which are reduced.  Thus, one comes down to the case where $S$ is reduced and irreducible. 
The reduction to the case where  $S = \Spec(B)$ is affine with $B$  a domain, and $Z = \Spec(R/I)$, with 
$R/I$ of finite type over $B$, is identical to the argument in \cite[pp.~13,\,14]{Sm18}, 
except that here we use the notation $B$ instead of $A$,
and in the argument here we have that $R/I$ is affine of finite type over $B$ because 
we assume that the morphism $Z \to S$ is an affine morphism of
finite type. (In \cite{Sm18}, the hypothesis instead is that $Z \to S$ is finite; of course, 
this implies affine of finite type.) The construction
of $X\oh \to S\0$ can be done locally: it corresponds to passing to $B_b \to R_{bg}$ as in Theorems~\ref{bc-reduced} 
and \ref{bc0-reduced}, 
with $b \in B\0$ and $g \in R \sm \big(\bigcup_{P \in \Min(I)}P\big) =:W_I$, where $g$ can also be chosen in $R\0$ by Discussion~\ref{base}(6), 
for suitable open affines in $S$ and $X$ and taking a union in both schemes. Since we can use $b \in B\0$
 and $g \in W_I \cap R\0$  we have  that $S\0 \inj S$ is {\it dense} open in $S$ and $X\oh$ is {\it dense} open in $X\0$.
 
Exactly as in \cite{Sm18},
we may assume that $\cF$ is the restriction of a coherent sheaf, also denoted $\cF$, on $X\oh$, which corresponds to an $R$-module $M$ since 
we are in the affine case. Then, in our local set-up, $f_*(R^rj_* \cF)$  may
be identified with  $H^{r+1}_I(M)_{bg}$, and the stated result now follows from Theorem~\ref{bc-reduced}, which gives 
condition \hyperref[bc-natural]{$(\natural)$}. 

(b) This follows from the fact that in the situation described in Theorem~\ref{bc0-reduced}, where $R$ is \fg as a $B$-algebra, the 
base change result also holds for $r = 0$.

(c) For this statement, we may reduce to the affine case with $B$ a domain and with $R/I$ integral over $B$,   
so that instead of localizing $B$ at  $b$ and $R$ at $bg$ we may localize both $B$ and $R$ at $bb'$, where $b' \in B\0$ 
is an element  whose image in $R/I$ is a multiple of the image 
of $g$ in $R/I$ (cf.~the last statements of Theorems~\ref{bc-reduced} and \ref{bc0-reduced}).
\end{proof}

\section{Comparison with the results of Karen Smith}\label{Smith}  

The results of Karen Smith \cite{Sm18} were a great inspiration in developing the theory of \S2.  
Many of the results of \cite{Sm18} are connected with cohomology of sheaves on schemes and base 
change, which we have already discussed in \S\ref{sec-basechange}.  
Much of \S\ref{sec-basechange},  including the proof of  Theorem~\ref{bc-geom}, 
reflects the methods in \cite{Sm18}.   We focus in this brief section on the underlying results in 
commutative algebra that are used in proving the base change results. 
In the process, we discuss their connections with Theorem~\ref{main}. 
We first note \cite[Theorem 2.1]{Sm18}:

\begin{theorem}[K.~E.~Smith]  Let $A$ be a Noetherian domain, let 
$R = A[\kern-.1em[\vect xh]\kern-.1em]$ be a power series ring over $A$, and let $M$ be 
a finitely generated $R$-module. Denote by $I$ the ideal $(\vect xh) \incn R$. 
Then the local cohomology modules $H^i_I(M)$ are generically free over $A$ and 
commute with base change for all $i$. 
\end{theorem}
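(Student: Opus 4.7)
The plan is to derive this theorem directly from Theorem~\ref{main}, observing that Smith's hypotheses fit cleanly into the framework of Notation~\ref{not}. First I would set $P := I = (\vect xh)R$, a prime ideal of $R$ with $R/P \cong A$. The subring $A \inj R$ of constants splits $R \surj R/P \cong A$, so $W := A\smo$ maps bijectively onto $(R/P)\smo$. Because $A$ is a domain, $\vect xh$ is a regular sequence on $R$ that generates $PR_P$, with residue field $R_P/PR_P \cong \fra(A)$; hence $R_P$ is a Noetherian local ring of Krull dimension $h$ whose maximal ideal is generated by $h$ elements, so $R_P$ is regular, in particular Gorenstein. I may therefore take $\om := R$ as a global ``canonical module'', since $\om_P = R_P$ is a canonical module for the Gorenstein ring $R_P$. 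All hypotheses of Notation~\ref{not} are now verified.

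Next I would invoke Theorem~\ref{main}(f) applied to $M$, producing $g \in W = A\smo$ such that for every $i \ge 0$ the module $H^i_P(M)_g$ admits the ascending filtration $\{\Ann_{H^i_P(M)_g}P^n\}_{n \in \N}$ whose union is all of $H^i_P(M)_g$ and whose successive quotients are $A_g$-free of finite rank; for $i < 0$ or $i > h$ the local cohomology $H^i_I(M)$ vanishes trivially. Since $A_g$ is a domain and each filtration quotient is free, every short exact sequence
\[
0 \to \Ann_{H^i_P(M)_g} P^n \to \Ann_{H^i_P(M)_g} P^{n+1} \to (A_g\text{-free of finite rank}) \to 0
\]
splits over $A_g$; by induction each stage is $A_g$-free, and hence so is the ascending union $H^i_P(M)_g$. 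This establishes generic freeness over $A$.

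For base change, the same filtration does the work. Given a map of Noetherian rings $A \to A'$, form $R' := A'[\kern-.1em[\vect xh]\kern-.1em]$, $I' := (\vect xh)R'$, and consider the $R'$-module $M'$ obtained from $M$ by base change along $R \to R'$. The modified \Cech complex on $\vect xh$ over $R_g$ computes $H^\bu_I(M)_g$, and base-changing this complex from $R_g$ to $R'_g$ produces the analogous complex computing $H^\bu_{I'}(M')_{g}$, because inverting the $x_i$'s commutes with any base change of the ambient ring. Transporting the filtration stage $\Ann_{H^i_P(M)_g}P^n$---which is finitely generated, $A_g$-free, and annihilated by $P^n$---through this base change, and then using that filtered colimits commute with tensor products, yields the desired isomorphism $H^i_I(M)_g \otimes_{A_g} A'_g \cong H^i_{I'}(M')_g$ for every $i$.

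The hard part, of course, lies in Theorem~\ref{main} itself, whose spectral-sequence argument supplies the filtration; once it is in hand, Smith's theorem becomes essentially a formal corollary. A secondary issue is verifying that $R_P$ is regular (so that $\om = R$ is a legitimate choice of global canonical module), but this is immediate from the regularity of $\vect xh$ together with the fact that $R_P/PR_P$ is a field. Making the base-change identification fully rigorous requires some care because $R \to R'$ is not flat in general, but the $A_g$-freeness of each filtration stage ensures that tensoring with $A'_g$ behaves correctly, and the \Cech complex inverts only the explicit elements $\vect xh$, so the completion present in $R'$ causes no essential difficulty.
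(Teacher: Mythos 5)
Your argument for generic $A$-freeness is correct and matches the paper's Discussion~\ref{A-base-change}: you apply Theorem~\ref{main}(f) to obtain the filtration by the modules $\Ann_{H^i_P(M)_g}P^n$ with finitely generated $A_g$-free factors, and then observe (this is exactly Proposition~\ref{ffAfree}, which you are in effect re-proving) that each such short exact sequence splits, so the union is $A_g$-free of countable rank. Fine.

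The base change argument, however, has a genuine gap. First, a point of interpretation: Smith's base change is along an $A$-module (or $A$-algebra) $L$, i.e., one compares $H^i_I(M \otimes_A L)$ with $H^i_I(M)\otimes_A L$, not along the map of power series rings $R \to R' = A'[\kern-.1em[\vect xh]\kern-.1em]$. Those two are not the same, since $R \otimes_A A' \ne A'[\kern-.1em[\vect xh]\kern-.1em]$ in general. Second, and more seriously, the proposed mechanism does not close. Your plan is to compute $H^\bullet_I$ via the modified \Cech complex $\cC^\bullet \otimes_R M$ and ``transport the filtration'' across $\otimes_A L$. But the filtration $\{\Ann_{H^i_P(M)_g}P^n\}$ lives on the \emph{cohomology} of that complex, not on the complex itself; the terms $M \otimes_R \cC^j$ are localizations of $M$, which are neither finitely generated nor $A$-flat in general, so there is no induced filtration on the \Cech complex to transport, and freeness of the cohomology alone is not enough to make $H^\bullet(\blank) \otimes_A L \cong H^\bullet(\blank \otimes_A L)$. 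What is needed is a bounded complex computing the local cohomology whose \emph{terms} (not just its cohomology) become $A_g$-free after localization. That is precisely the paper's device: use the Tor identification $H^j_P(M) \cong \Tor^R_{h-j}\bigl(M,\, H^h_P(R)\bigr)$ and replace the \Cech complex by $G_\bullet \otimes_R H^h_P(R)$, where $G_\bullet$ is a finite free $R$-resolution of $M$. After localizing at one $g$, the terms of $(G_\bullet \otimes_R H^h_P(R))_g$ are finite direct sums of $H^h_P(R)_g$, which is $A_g$-free, and the homology modules $H^j_P(M)_g$ are $A_g$-free as well; the standard splitting argument (free quotients split, so all cycles and boundaries are direct summands) then shows that taking homology commutes with $\blank\otimes_{A_g} L$. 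Your write-up is missing this change of complex, which is the crucial step.
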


We second note \cite[Corollary 1.3]{Sm18}:

\begin{corollary}[K.~E.~Smith]\label{Smith-bc}  
Let $A$ be a Noetherian reduced ring such that $A \to R$, where $R$ is Noetherian, 
and let $I = (\vect xh)$ be an ideal of $R$ such that the composite map $A \to R \to R/I$ is 
module-finite. Let $M$ be an $R$-module such that $M/IM$ is finitely generated over $A$.  
Then there exists $g \in A \sm \bigcup_{\fp \in \Min(A)}\fp$ such that $H^j_I(M_g)$ is $A_g$-free for all $j$.  
Moreover,  $H^j_I(M \otimes_A L) \cong H^j_I(M)\otimes_AL$ for every $A_g$-module $L$.  
In particular, this holds when $L$ is an $A$-algebra such that $A \to L$ factors through $A_g$. 
\end{corollary}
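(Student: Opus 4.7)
The plan is to reduce to the case where $A$ is a Noetherian domain, apply graded generic freeness (Lemma~\ref{grgenfree}) to the associated graded data, transfer the resulting freeness to the local cohomology modules, and deduce the base-change statement from flatness. First, a standard patching argument reduces to the domain case: since $A$ is reduced Noetherian with finitely many minimal primes $\fp_1, \dotsc, \fp_t$, for each $i$ the choice of $h_i \in \bigcap_{j \neq i}\fp_j \setminus \fp_i$ makes $A_{h_i}$ a Noetherian domain (the other minimal primes become the unit ideal there), and once the domain case is known a combination of the resulting $g_i$'s yields a single $g \in A \setminus \bigcup_i \fp_i$ with the required property.

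Assume $A$ is a domain. Since $I$ is a finitely generated ideal in the Noetherian ring $R$ and $R/I$ is module-finite over $A$, each graded piece $I^n/I^{n+1}$ of $\gr_I R$ is module-finite over $A$, so $\gr_I R$ is a finitely generated $\N$-graded $A$-algebra generated in degree one. Because $M/IM$ is finitely generated over $A$, the module $\gr_I M = \bigoplus_n I^n M / I^{n+1} M$ is finitely generated as a graded $\gr_I R$-module with each graded piece finitely generated over $A$. Applying Lemma~\ref{grgenfree} yields $g \in A \smo$ such that each $(I^n M / I^{n+1} M)_g$ is $A_g$-free of finite rank; consequently each $(M/I^n M)_g$ is an iterated extension of finite-rank $A_g$-free modules, hence $A_g$-flat.

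The main obstacle is transferring this graded freeness to $H^j_I(M)$. Computing $H^j_I(M) = \varinjlim_n H^j(K^\bu(x_1^n, \dotsc, x_h^n; M))$, each Koszul cohomology is killed by $(x_1^n, \dotsc, x_h^n) \subseteq I^n$ and is a subquotient of finite direct sums of $M/(x_1^n, \dotsc, x_h^n)M$, which in turn is a finite-rank $A_g$-flat quotient of the filtered $M/I^N M$. Filtering the Koszul complex by powers of $I$ and examining the associated spectral sequence gives an $E_1$ page built from the $A_g$-free graded pieces $I^k M/I^{k+1}M$, and tracking $A_g$-flatness through the spectral sequence together with the filtered colimit delivers $A_g$-flatness of $H^j_I(M)_g$. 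The filtration $\{\Ann_{H^j_I(M)_g} I^n\}_n$ can then be shown to have finite-rank $A_g$-free factors, so it splits over $A_g$ and the colimit $H^j_I(M)_g$ is $A_g$-free. Alternatively, one may complete $R$ along $I$: since lifts of $A$-module generators of $R/I$ together with $x_1, \dotsc, x_h$ generate $R$ modulo every power of $I$, the completion $\hat R$ is a quotient of a power series ring $B[\![x_1, \dotsc, x_h]\!]$ over the finitely generated $A$-algebra $B = A[e_1, \dotsc, e_s]$, reducing the question to the power-series case covered by the preceding theorem of Smith.

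Finally, once $A_g$-flatness of the \Cech complex on $M_g$ and of its cohomology are established, tensoring by any $A_g$-module $L$ commutes with taking cohomology, yielding $H^j_I(M \otimes_A L) \cong H^j_I(M) \otimes_A L$ for every $A_g$-module $L$, and in particular for every $A$-algebra $L$ whose structure map factors through $A_g$.
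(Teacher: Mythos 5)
Your reduction steps (localize to make $A$ a product of domains, then complete $R$ along $I$ to reduce to $R = A[\kern-.1em[X_1,\dotsc,X_h]\kern-.1em]$ with $M$ finitely generated) match the paper's reduction exactly, and your use of graded generic freeness on $\gr_I M$ is a reasonable point of departure. However, the core of your argument has two genuine gaps.

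First, the step transferring $A_g$-freeness from $\gr_I M$ to $H^j_I(M)_g$ is not carried out, and the route you sketch does not work as stated. Knowing that the $E_1$ page of the $I$-adic filtration spectral sequence on the \Cech or Koszul complex is built from $A_g$-free modules does \emph{not} imply that the abutment is $A_g$-flat: the $E_\infty$ page consists of subquotients, and $A$-flatness is not inherited by arbitrary subquotients. The sentence ``the filtration $\{\Ann_{H^j_I(M)_g}I^n\}_n$ can then be shown to have finite-rank $A_g$-free factors'' is precisely the nontrivial content that must be proved, not a consequence one can quote. In the paper this is exactly what Theorem~\ref{main}(d) establishes, and it is done not via a \Cech spectral sequence but by expressing $\Ann_N P^n$ as $\hom_R(M/P^nM,\,E)$, proving generic vanishing of $\Ext^1_R(\blank,\,E)$ (part~(a)), and combining that with graded generic freeness of $\gr_P M$. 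The combination of Theorem~\ref{main}(e),(f) with Proposition~\ref{ffAfree} (free filterable $\Rightarrow$ $A$-free when $A\inj R\surj R/P$ is an isomorphism) then gives $A_g$-freeness of every $H^j_I(M)_g$. Your ``alternative'' (reduce to the power series case and invoke Smith) is in fact the paper's route, but you still owe the argument in that case.

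Second, the base-change paragraph is wrong as written. The \Cech complex on $M_g$ has terms $M_{g,\,x_{i_1}\cdots x_{i_j}}$, which are localizations of $M$; since $M$ need not be $A_g$-flat, neither are these terms, so ``$A_g$-flatness of the \Cech complex on $M_g$'' is not something one can establish and the universal-coefficients-style argument does not apply to that complex. What the paper does (Discussion~\ref{A-base-change}) is to write $H^j_P(M)\cong\tor^R_{h-j}\big(M,\,H^h_P(R)\big)$ and compute it as the homology of $G_\bullet\otimes_R H^h_P(R)$, where $G_\bullet$ is a finite free $R$-resolution of $M$. Each term $G_i\otimes_R H^h_P(R)$ is a finite direct sum of copies of $H^h_P(R)$, which \emph{is} generically $A$-free (a special case of the freeness statement already established). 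That complex has $A_g$-free terms and $A_g$-free homology, so homology commutes with $\otimes_{A_g}L$, which is what yields the base-change isomorphism. You need to replace the \Cech complex with this Tor complex for the argument to go through.
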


Another key result in \cite{Sm18} is that when $R = A[\kern-.1em[\vect X h]\kern-.1em]$, where $A$ is a 
domain without loss of generality,  and $M$ is finitely generated over $R$ as above, one has, after localizing 
at one element of $A\smo$, that 
$H^j_I(M) \cong \hom_R\big(\Ext^{h-j}_R(M,R), H^h_I(R)\big)$ for all $j \ge 0$.

\begin{discussion}\label{A-base-change}
Evidently, to obtain the base change result of \cite[Corollary 1.3]{Sm18}, we may  apply the last statement of 
Theorem~\ref{bc-reduced}.

Let $R = A[\kern-.1em[\vect X h]\kern-.1em]$, where $A$ is a Noetherian domain, and let $M$ be a finitely 
generated $R$-module, as above. Let $P = (\vect X h)R$, $\om = R$, and $W = A\smo$. Note that $R_P$ 
is regular and, hence, $\om_P$ is an canonical module for $R_P$. 
By Theorem~\ref{main}, we see that, \wg, $H^j_P(M)$ is \ff and 
$H^j_P(M) \cong \hom_R\big(\Ext^{h-j}_R(M,\om), H^h_P(\omega)\big)$ for all $j \ge 0$.
Now, as the composition $A \inc R \surj R/P$ is an isomorphism, Proposition~\ref{ffAfree} tells 
us that, \wg, $H^j_P(M)$ is $A$-free for all $j \ge 0$.

Of course, in the more general situation of this paper, as in Theorem~\ref{main}, one gets local 
cohomology  with  filtrations that have $A$-free factors \wg.  One cannot hope for more in the general
case, since the local cohomology modules, typically, are not $A$-modules.
\end{discussion}  

\begin{discussion}\label{comp}
Note that \cite{Sm18} states many results in terms of $\Hom\cts_A(M, A)$, which consists of the 
$A$-linear maps from the $R$-module $M$ to $A$ that vanish on $I^tM$ for some $t$. 
Here $A$, $R$, $I$ and $M$ are as in Corollary~\ref{Smith-bc}, but the situation reduces to the case 
where $R$ is a power series over a domain $A$. 
With $R = A[\kern-.1em[\vect x h]\kern-.1em]$ and $ I = (\vect xh)R$,  we have that $H^h_I(R) \cong \Hom\cts_A(R,\, A)$,
which is the same as the module $E$ in our Theorem~\ref{main}  if one takes $\om := R$ and $P: = I$.
Moreover,  as noted in \cite[Proposition~3.1]{Sm18}, the functor $M \mapsto \Hom_R(M, E)$ is naturally isomorphic to
the functor $M \mapsto \Hom\cts_A(M, A)$ on $R$-modules $M$: in essence, this is just the usual adjointness of tensor
and Hom, restricted to maps that kill $I^tM$ for some $t$.  It is then easy to see that the results of Theorem~\ref{main}, 
when applied to the case where $R = A[\kern-.1em[\vect xh]\kern-.1em]$, $P = (\vect x h)R$, $W = A\smo$ and 
$\om = R$,  are the same as the results of \cite{Sm18}, in light of the fact that, by Proposition~\ref{ffAfree}, \ff 
implies $A$-free.   
\end{discussion}

Finally, we note that the result on generic local duality in \cite[Theorem~5.1]{Sm18} is stated 
there as follows: 

\begin{theorem}[K.~E.~Smith]Let $R$ be a power series ring $A[\kern-.1em[\vect x h]\kern-.1em]$ over a Noetherian domain 
$A$, let $I = (\vect x h)R$, and let $M$ be a finitely generated $R$-module. Then, after replacing $A$ by its localization at one 
element of $A \smo$, for all $i \ge 0$, there is a functorial isomorphism 
$$H^i_I(M) \cong  \Hom\cts_A\big(\Ext^{h-i}_R(M, R),\, A \big).$$\end{theorem}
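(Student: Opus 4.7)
The plan is to specialize Theorem~\ref{main}(e) to the setup $P := I = (\vect xh)R$, $\om := R$, and $W := A \smo$. Since $R = A[\kern-.1em[\vect xh]\kern-.1em]$ is regular and $I$ has height $h$, the localization $R_P$ is a regular local ring of Krull dimension $h$, hence Cohen-Macaulay, and $\om_P = R_P$ trivially serves as a canonical module for $R_P$. Thus all of the hypotheses of Theorem~\ref{main} are satisfied, and $E := H^h_P(\om) = H^h_I(R)$.

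Next, Theorem~\ref{main}(e) supplies an element $g \in W = A \smo$, depending on $M$ but \emph{not} on $i$, together with a natural isomorphism
\[
H^i_I(M)_g \;\cong\; \Hom_R\bigl(\Ext^{h-i}_R(M,R),\, H^h_I(R)\bigr)_g
\]
for all $i \ge 0$. For $i > h$ both sides vanish automatically --- the left side because $I$ has $h$ generators, the right side because $\Ext^{h-i}_R$ is zero for $h-i<0$ --- so only the finite range $0 \le i \le h$ carries content, which is exactly the range guaranteed by Theorem~\ref{main}(e); no excellence or finite-dimensionality hypothesis on $R$ is required. After replacing $A$ by $A_g$, the subscript can be absorbed.

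The remaining step is to rewrite $\Hom_R(-,\, H^h_I(R))$ as $\Hom\cts_A(-,\, A)$. As recorded in Discussion~\ref{comp}, when $R = A[\kern-.1em[\vect xh]\kern-.1em]$ and $I = (\vect xh)R$, there is a natural identification $H^h_I(R) \cong \Hom\cts_A(R,\, A)$, and the usual tensor-Hom adjunction, restricted to $A$-linear maps that vanish on $I^tN$ for some $t$, furnishes a natural equivalence of functors $\Hom_R(N,\, H^h_I(R)) \cong \Hom\cts_A(N,\, A)$ on arbitrary $R$-modules $N$. Applying this with $N := \Ext^{h-i}_R(M, R)$ yields exactly the claimed isomorphism $H^i_I(M) \cong \Hom\cts_A(\Ext^{h-i}_R(M, R),\, A)$, functorial in $M$.

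The main difficulty, such as it is, is little more than assembly: both ingredients --- generic local duality on the one hand and the continuous-Hom dictionary on the other --- are already in hand, and the specialization $\om = R$, $P = I$ lines the two up perfectly. The only fine point worth double-checking is that the element $g \in A\smo$ can be chosen independently of $i$; this is exactly what Theorem~\ref{main}(e) guarantees, making the passage to a single localization uniform across all $i \ge 0$.
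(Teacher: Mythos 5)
Your proposal follows exactly the route the paper takes: specialize Theorem~\ref{main}(e) with $P := I$, $\om := R$, $W := A\smo$ (observing that $R_P$ is regular local of dimension $h$, so $\om_P = R_P$ is a canonical module), and then rewrite $\Hom_R(-,\, H^h_I(R))$ as $\Hom\cts_A(-,\, A)$ via the natural identification recorded in Discussion~\ref{comp}. Your remarks about the uniformity of $g$ in $i \ge 0$ and the automatic vanishing for $i > h$ (so no excellence hypothesis is needed) are correct and match the paper's reasoning.
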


As noted in Discussion~\ref{comp} just above, the $R$-module $\Hom\cts_A\big(\Ext^{h-i}_R(M, R),\, A \big)$ is naturally 
isomorphic with $\Hom_R\big(\Ext^{h-i}_R(M, R),\, H^h_I(R)\big)$. 
Thus, our result on  generic local duality in part~(e) of Theorem~\ref{main}, applied to 
$R = A[\kern-.1em[\vect xh]\kern-.1em]$, agrees with the result on generic local duality in \cite[Theorem~5.1]{Sm18}.

\section{\texorpdfstring{\Rpf}{(R || P)-pseudoflat} modules}\label{Rpf}  

In this section, we discuss \Rpf $R$-modules. See Definition~\ref{def:Rpf}.

\begin{proposition}\label{reRpf} Let $R$ be a ring, let $\fP$ be a proper ideal in $R$, and let $A:=R/\fP$.
\bena
\item A direct limit of \Rpf $R$-modules is \Rpf. 
\item A flat $A$-module is \Rpf. 
\item Let $0 \to M' \to M \to M'' \to 0$ be an exact sequence of $R$-modules.  
\beni
\item If $M'$ and $M''$ are \Rpf then so is $M$. 
\item If $M$ and $M''$ are \Rpf then so is $M'$.
\een
\item An $R$-module with a countable ascending filtration whose factors are
\Rpf is \Rpf. 
\item An arbitrary direct sum of \Rpf $R$-modules is \Rpf. 
\item An $R$-module with a countable ascending filtration whose factors are
flat $A$-modules is \Rpf. 
\item If an $R$-module is \ffr $\fP$ then it is \Rpf. 
\item A module with a finite right resolution by \Rpf $R$-modules is \Rpf. 
\item If $A$ is regular, an $A$-module $M$ is \Rpf if and only if it is $A$-flat. 
\item If $R \to R'$ is flat over $R$ and $M$ is \Rpf then $M \otimes_R R'$ is \Rp{R'}{\fP R'}. 
\een
\end{proposition}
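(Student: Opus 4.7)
The plan is to dispatch the ten parts in a dependency order: (a)--(c) are the basic technical ingredients, (d)--(h) and (j) are formal consequences, and (i) is the real obstacle. For (a), filtered colimits commute with tensoring by the flat $R$-algebra $R'$ and with quotienting by $(\vect fi)$, and an element is a nonzerodivisor on a filtered colimit whenever it is on each term; so the possibly improper regular sequence property is preserved. For (b), if $M$ is $A$-flat and $R'$ is flat over $R$, then $R' \otimes_R M \cong (R'/\fP R') \otimes_A M$ is flat over $R'/\fP R'$, and any possibly improper regular sequence of a ring is improperly regular on any flat module over it. For (c) I would induct on the length of the sequence, using the standard short-exact-sequence lemmas: in $0 \to X \to Y \to Z \to 0$, an element is a nonzerodivisor on $Y$ if it is on both ends, and on $X$ if it is on $Y$. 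The inductive step requires the short exact sequence $0 \to T'/(\vect fi)T' \to T/(\vect fi)T \to T''/(\vect fi)T'' \to 0$, which is available once $(\vect fi)$ is known to be improperly regular on the modules involved, because the requisite $\Tor_1$ then vanishes.

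Parts (d)--(h) and (j) then follow formally from (a)--(c). For (d), induction on $n$ with (c)(i) shows that each partial union $M_n$ is pseudoflat, and (a) handles $M = \varinjlim M_n$. Part (e) follows because any direct sum is the filtered colimit of its finite subsums, and finite direct sums of pseudoflat modules are pseudoflat by iterated use of (c)(i) on split short exact sequences. Part (f) combines (b) and (d); part (g) is then immediate since the defining filtration of a module that is free filterable relative to $\fP$ has $A$-free, hence $A$-flat, factors. For (h), I would induct on the length $k$ of the right resolution $0 \to M \to N^0 \to \cdots \to N^k \to 0$: the module $K := \image(N^0 \to N^1) = \Ker(N^1 \to N^2)$ admits a pseudoflat right resolution of length $k-1$ and so is pseudoflat by induction, and then the short exact sequence $0 \to M \to N^0 \to K \to 0$ together with (c)(ii) concludes. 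For (j), any flat $R'$-algebra $R''$ is also flat over $R$, and $R'' \otimes_{R'}(M \otimes_R R') \cong R'' \otimes_R M$ while $R''/(\fP R')R'' = R''/\fP R''$, so the pseudoflatness condition for $M \otimes_R R'$ over $R'$ relative to $\fP R'$ coincides tautologically with the pseudoflatness condition for $M$ over $R$ relative to $\fP$ applied to $R''$ viewed as a flat $R$-algebra.

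The hard part is (i). The ``if'' direction is (b). For the converse, assume $M$ is an $A$-module that is pseudoflat. To show $A$-flatness I would work locally at each prime $\fq$ of $A$: let $\fP' \supseteq \fP$ be its preimage in $R$, and apply the definition of pseudoflatness with the flat $R$-algebra $R' := R_{\fP'}$, for which $R'/\fP R' = A_\fq$. Any lift to $R_{\fP'}$ of a regular system of parameters $\vect xn$ of the regular local ring $A_\fq$ is then a possibly improper regular sequence on $R' \otimes_R M = M_\fq$. Since the Koszul complex on such a sop is a finite free resolution of the residue field $k_\fq$ of $A_\fq$, this regularity translates into $\Tor_i^{A_\fq}(k_\fq, M_\fq) = 0$ for all $i \ge 1$. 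The local criterion of flatness over a regular local ring then gives that $M_\fq$ is flat over $A_\fq$, and since this holds at every prime of $A$, $M$ is $A$-flat. The subtle point, which is the heart of the matter, is applying the local flatness criterion without assuming $M$ is finitely generated; over a regular local ring this is nevertheless valid, because finite global dimension combined with a prime-filtration/dimension-shift argument propagates the vanishing $\Tor_i(k_\fq, \blank) = 0$ for $i \ge 1$ to $\Tor_1^{A_\fq}(N, M_\fq) = 0$ for every finitely generated $N$, and then to all $N$ by compatibility of $\Tor$ with filtered colimits.
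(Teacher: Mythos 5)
Parts (a)--(h) and (j) are handled correctly and essentially the same way the paper does (the paper likewise derives (d)--(h) and (j) formally from (a)--(c)). One small remark on (c): the appeal to vanishing of ``the requisite $\Tor_1$'' is not quite what does the work, since the Koszul complex on $\vect f i$ over $R'$ need not resolve $R'/(\vect f i)R'$ ($\vect f i$ need not be regular on $R'$ itself, only on $R'/\fP R'$); but the exactness of $0 \to T'/(\vect f i)T' \to T/(\vect f i)T \to T''/(\vect f i)T'' \to 0$ follows directly by iterating the snake lemma one element $f_j$ at a time, so the conclusion stands.

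The converse direction of (i) has a genuine gap. After localizing to the regular local ring $A_\fq$, you extract from the pseudoflatness hypothesis only the fact that \emph{one} regular system of parameters of $A_\fq$ is a possibly improper regular sequence on $M_\fq$, hence that $\Tor_i^{A_\fq}(k_\fq, M_\fq) = 0$ for all $i \ge 1$. You then assert that, over a regular local ring, this Tor vanishing at the residue field alone forces flatness, by a prime-filtration and dimension-shift argument. That assertion is false for arbitrary (non-finitely-generated) modules, and the dimension-shift induction breaks exactly where one would expect: it shows only that an element $x$ of the maximal ideal outside a prime $\fp$ acts bijectively on $\Tor_j(A_\fq/\fp, M_\fq)$, and a nonzero module over the localization $(A_\fq/\fp)_x$ has precisely that property while being killed by $\fp$, so you cannot conclude the Tor module vanishes. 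Concretely, take $A = k[[x,y]]$ and $M := k((y))$, viewed as an $A$-module via the ring map $A \to k((y))$ with $x \mapsto y^2$, $y \mapsto y$. Since $y$ acts invertibly on $M$, a direct computation with the Koszul complex on $x,y$ gives $\Tor_j^A(k, M) = 0$ for all $j \geq 0$; yet $M$ is not $A$-flat, because $x - y^2$ is a nonzerodivisor on $A$ that annihilates $M$, so $(x-y^2)A \otimes_A M \to M$ is the zero map on a nonzero module. (This $M$ is of course also not pseudoflat, for the same reason, so it does not contradict the Proposition itself --- it contradicts only your intermediate claim.) The correct argument, which is what the paper cites from \cite[6.7, p.~77]{HH92}, necessarily uses the full strength of the hypothesis that \emph{every} regular sequence on $A$ is a possibly improper regular sequence on $M$, not just the Koszul complex on a single system of parameters at the maximal ideal; your proof needs to be replaced by that argument.
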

\begin{proof} Parts~(a), (b), (c) and (j) follow from the definition of \Rpf and basic facts
about flat base change and the behavior of (possibly improper) regular sequences. 
Parts~(d) and (e) follow from (c) and (a),  while
part~(f) follows from (b) and (d).  Part~(f) implies part~(g), while  part~(h) follows from
part~(c)(ii)  by induction on the length of the resolution. To prove (i), note that if $M$ is $A$-flat 
then it is \Rpf by part~(b).  Now suppose $M$ is an $A$-module and \Rpf.  This is preserved
when make a base change to any local ring of $A$, and so we may assume that $A$ is regular local.
The result now follows from the assertion that if $A$ is regular local and every regular sequence on
$A$ is a possibly improper regular sequence on $M$, then $M$ is $A$-flat. The argument is given
in \cite[6.7, p.~77]{HH92},  where it is not ever used that the regular sequences $\vect x k$ on $A$ are proper regular
sequences on $M$:  one still has that $\Tor^A_i\big(A/(\vect x k), \, M\big) = 0$ for $i \geq 1$ when $\vect x k$
are possibly improper regular sequences on $M$.
\end{proof}
 
We also note:
  
\begin{proposition}\label{HdRpf} Let $\fP$ be a proper ideal in $R$ and let $A:=R/\fP$.  
Also, let $R'$ be a flat extension of $R$, let $\uy:= \vect y d  \in R'$ 
be a sequence of elements whose images form a possibly
improper regular sequence on $R'/\fP R'$, and let $\uy^t = y_1^t, \, \ldots, \, y_d^t$. If
\[
\tag{$*$}  0 \xra{\quad} M' \xra{\quad} M \xra{\quad} M'' \xra{\quad} 0
\]
is an exact sequence of $R$-modules such that $M''$ is \Rpf,  then for all $t \in \N$, the sequences
\[
\tag{$*_t$}  0 \xra{\quad} \frac{M' \otimes_R R'}{(\uy^t)(M' \otimes_R R')} 
\xra{\quad}  \frac{M \otimes_R R'}{(\uy^t)(M \otimes_R R')} 
\xra{\quad} \frac{M'' \otimes_R R'}{(\uy^t)(M'' \otimes_R R')} \xra{\quad} 0
\]
and the sequence 
\[
\tag{$**$}   0 \xra{\quad} H^d_{(\uy)}(M'\otimes_R R') \xra{\quad} H^d_{(\uy)}(M\otimes_R R') \xra{\quad} H^d_{(\uy)}(M''\otimes_R R') \xra{\quad} 0
\] 
are exact. 
Hence, for any exact sequence $0 \to M_1 \to M_2 \to \cdots \to M_n \to 0$, with $n$ finite, 
and given any $i_0 \in \{1,\,2\}$, if $M_i$ is \Rpf for all $i \neq i_{0}$, then the module $M_{i_0}$ is \Rpf and the whole sequence remains exact when we apply either $\blank \otimes_R R'/(\uy^t)$ or $H^d_{(\uy)}(\blank\otimes_R R')$. 
\end{proposition}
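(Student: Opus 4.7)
The plan is to prove $(*_t)$ first by induction on $d$, deduce $(**)$ by taking a direct limit in $t$, and then extend to the general $n$-term sequence by splicing short exact sequences.

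Since $R'$ is $R$-flat, $(*)$ remains exact after $\blank\otimes_R R'$; put $N':=M'\otimes_R R'$, $N:=M\otimes_R R'$, $N'':=M''\otimes_R R'$. A standard fact guarantees that powers of a (possibly improper) regular sequence remain a (possibly improper) regular sequence, so $\uy^t=y_1^t,\ldots,y_d^t$ still maps to a possibly improper regular sequence on $R'/\fP R'$, and pseudoflatness of $M''$ then forces $\uy^t$ to be a possibly improper regular sequence on $N''$. I would then prove by induction on $k$, for $0\le k\le d$, writing $I_k:=(y_1^t,\ldots,y_k^t)$, that the sequence $0\to N'/I_k N'\to N/I_k N\to N''/I_k N''\to 0$ is exact. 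The base case $k=0$ is immediate. For the step $k\to k+1$, I would apply the snake lemma to two copies of the $k$-th sequence joined vertically by multiplication by $y_{k+1}^t$: the only potentially nonzero connecting term is $\ker\bigl(y_{k+1}^t\colon N''/I_k N''\to N''/I_k N''\bigr)$, which vanishes because $y_{k+1}^t$ is a nonzerodivisor on $N''/I_k N''$ by the regular-sequence property. Setting $k=d$ yields $(*_t)$.

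For $(**)$, I would invoke the general identification $H^d_{(\uy)}(L)\cong \varinjlim_t L/(\uy^t)L$, valid for every $R'$-module $L$, which follows from $H^d_{(\uy)}(L)=\varinjlim_t H^d\bigl(K^\bullet(\uy^t;L)\bigr)$ together with the elementary identity $H^d\bigl(K^\bullet(\uy^t;L)\bigr)=L/(\uy^t)L$ at the top spot of the Koszul cocomplex. Since filtered direct limits of $R'$-modules preserve short exact sequences, passing $(*_t)$ to the limit in $t$ yields $(**)$.

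For the long exact sequence $0\to M_1\to\cdots\to M_n\to 0$, I would split it in the usual way into short exact sequences $0\to K_i\to M_i\to K_{i+1}\to 0$, where $K_i:=\ker(M_i\to M_{i+1})$, with the conventions $K_2=M_1$ and $K_n=M_n$. Starting from $0\to K_{n-1}\to M_{n-1}\to M_n\to 0$, Proposition \ref{reRpf}(c)(ii) yields that $K_{n-1}$ is pseudoflat; iterating inward gives pseudoflatness of each $K_i$ for $3\le i\le n-1$. On the leftmost piece $0\to M_1\to M_2\to K_3\to 0$: if $i_0=1$ then $M_2$ and $K_3$ are pseudoflat and part (c)(ii) of that proposition yields $M_1$ pseudoflat, while if $i_0=2$ then $M_1$ and $K_3$ are pseudoflat and part (c)(i) yields $M_2$ pseudoflat. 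Because the right-hand term of every short piece in the splitting is now known to be pseudoflat, $(*_t)$ and $(**)$ apply to each, and reassembling the resulting short exact sequences gives exactness of the entire long sequence after $\blank\otimes_R R'/(\uy^t)$ and after $H^d_{(\uy)}(\blank\otimes_R R')$. The only nontrivial input is the snake-lemma induction used to establish $(*_t)$; everything else is routine bookkeeping combined with the standard direct-limit description of the top local cohomology.
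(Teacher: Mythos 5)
Your argument is correct and follows essentially the same route as the paper: reduce $(*_t)$ by snake-lemma induction on the length of the sequence $\uy^t$, deduce $(**)$ as the direct limit in $t$, and handle the $n$-term sequence by splicing it into short exact sequences $0\to K_i\to M_i\to K_{i+1}\to 0$ (which is just an unrolled version of the paper's ``induction on $n$''), using Proposition~\ref{reRpf}(c) to propagate pseudoflatness to the kernels. The only cosmetic difference is that the paper first uses Proposition~\ref{reRpf}(j) to reduce to $R'=R$ before running the argument, whereas you carry $\otimes_R R'$ along throughout; this changes nothing of substance.
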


\begin{proof} Since the pseudoflatness conditions are preserved by the flat base change $R \to R'$ 
(cf.~Proposition~\ref{reRpf}(j)), we may assume that $R' = R$.
The sequences $(*_t)$ are standard and reduce by induction to the case $d=1$.
The sequence $(**)$ is the direct limit of the sequences $(*_t)$. 

The final statement concerning the exact sequence $0 \to M_1 \to M_2 \to \cdots \to M_n \to 0$ 
follows at  once by induction on $n$, the length of the exact sequence, 
with $n = 3$ proved just now in $(*_t)$ and $(**)$ above. 
The conclusion that $M_{i_0}$ is \Rpf can be obtained by repeated applications of Proposition~\ref{reRpf}(c).
\end{proof}

\begin{corollary}\label{rseqfilt}  
Let $\fP$ be a proper ideal in $R$ and let $A:=R/\fP$.  Let $R'$ be a flat extension of $R$, and let
$\uy = \vect yd \in R'$ be a sequence of elements whose images in $R'/\fP R'$ form a possibly
improper regular sequence. Suppose that 
$$
0 = M_0 \inc M_1 \inc \cdots \inc M_n \inc \cdots \inc M
$$ 
is a finite or countably infinite filtration of $M$ over $R$, so that $M = \bigcup_{n \geq 1} M_n$, 
and denote the factors as $N_i := M_i/M_{i-1}$,  $i \geq 1$.   
If the modules $N_i$ are \Rpf for all $i \geq 1$, then $M$ is \Rpf and, for all $t$, the modules 
\[
\frac{M_i \otimes_R R'}{(\uy^t)(M_i \otimes_R R')} 
\quad \text{ (respectively, $H^d_{(\uy)}(M_i \otimes_R R')$),} \quad i \ge 0,
\]
give a corresponding filtration of $\dfrac{M \otimes_R R'}{(\uy^t)(M \otimes_R R')}$ (respectively, 
$H^d_{(\uy)}(M \otimes_R R')$) with factors $\dfrac{N_i \otimes_R R'}{(\uy^t)(N_i \otimes_R R')}$ (respectively,
$H^d_{(\uy)}(N_i \otimes_R R')$). 
\end{corollary}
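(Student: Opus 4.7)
The plan is to deduce this from Proposition~\ref{HdRpf} by repeatedly applying it to the short exact sequences extracted from the filtration, and then to pass to direct limits in the countably infinite case. The two functors $F_t(\blank) := \blank \otimes_R R'/(\uy^t)(\blank \otimes_R R')$ and $G(\blank) := H^d_{(\uy)}(\blank \otimes_R R')$ behave in parallel fashion, so a single argument handles both.

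First I would observe that $M$ itself is \Rpf by Proposition~\ref{reRpf}(d), since its filtration $\{M_n\}_n$ has pseudoflat factors $N_i$. The same observation applied to the truncated filtration $\{M_j/M_i\}_{j\ge i}$ of $M/M_i$ shows that each quotient $M/M_i$ is pseudoflat as well. For every $i \ge 1$ the short exact sequence $0 \to M_i \to M \to M/M_i \to 0$ has pseudoflat right-hand term, so Proposition~\ref{HdRpf} implies that both $F_t$ and $G$ carry it to a short exact sequence. In particular $F_t(M_i) \hookrightarrow F_t(M)$ and $G(M_i) \hookrightarrow G(M)$, giving an ascending chain of submodules. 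To identify the factors, I would apply Proposition~\ref{HdRpf} to $0 \to M_{i-1} \to M_i \to N_i \to 0$: since $N_i$ is pseudoflat, $F_t$ and $G$ preserve exactness, yielding natural isomorphisms $F_t(M_i)/F_t(M_{i-1}) \cong F_t(N_i)$ and $G(M_i)/G(M_{i-1}) \cong G(N_i)$.

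In the countably infinite case, I still need to check that the filtrations produced in this way exhaust $F_t(M)$ and $G(M)$. Here I would use that both functors commute with filtered colimits: $F_t$ because tensor product does, and $G$ because local cohomology commutes with direct limits of modules. Combined with the injectivity of the transition maps established above, this gives $F_t(M) = \bigcup_n F_t(M_n)$ and $G(M) = \bigcup_n G(M_n)$, completing the construction of the filtrations with the prescribed factors.

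The main technical point to monitor is the interplay between the flat base change $R \to R'$ and the pseudoflatness hypothesis, since every application of Proposition~\ref{HdRpf} tacitly invokes Proposition~\ref{reRpf}(j) to transport pseudoflatness to $R'$. Beyond this, I do not anticipate any serious obstacle: the proof is essentially a bookkeeping exercise combining Proposition~\ref{reRpf}(d), Proposition~\ref{HdRpf}, and the exactness of filtered colimits.
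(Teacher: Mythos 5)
Your argument is correct and supplies exactly the reasoning the paper intends but leaves implicit (the paper gives no explicit proof of this corollary). The three ingredients you isolate — Proposition~\ref{reRpf}(d) to get pseudoflatness of $M$ and of each truncation $M/M_i$, Proposition~\ref{HdRpf} applied to the two families of short exact sequences $0\to M_i\to M\to M/M_i\to 0$ and $0\to M_{i-1}\to M_i\to N_i\to 0$ to obtain injectivity into $F_t(M)$ and $G(M)$ and to identify the factors, and the fact that both $\blank\otimes_R R'/(\uy^t)$ and $H^d_{(\uy)}(\blank\otimes_R R')$ commute with filtered colimits to handle exhaustion — are exactly what is needed and are deployed correctly. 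One small remark: once you have the short exact sequences $0\to F_t(M_{i-1})\to F_t(M_i)\to F_t(N_i)\to 0$ from applying Proposition~\ref{HdRpf} to $0\to M_{i-1}\to M_i\to N_i\to 0$, the injectivity of $F_t(M_i)\to F_t(M)$ already follows by composing transition maps and passing to the colimit, so the detour through the pseudoflatness of $M/M_i$ is not strictly necessary; but it is harmless and arguably makes the nesting of the $F_t(M_i)$ inside $F_t(M)$ more transparent.
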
 

Before stating the next corollary, we set up the following notation, which is convenient in 
describing elements of top local cohomology:

\begin{notation}\label{lcnot} Let $\uy = \vect y d \in R$ and let $M$ be an $R$-module, so that we may view 
$H^d_{(\uy)}(M)$ as $\dlim t M/(\uy^t)M$ as usual, where $(\uy^t) := (y_1^t, \, \ldots, y_d^t)R$.
For $u \in M$, we use the notation $[u;\, \uy^t]$ for the natural image of $u+(\uy^t)M \in M/(\uy^t)M$ in 
$H^d_{(\uy)}(M)$.
Note that when $\uy$ is a regular sequence on $M$,  we may think of $M/(\uy^t)M$ as a natural 
submodule of $H^d_{(\uy)}(M)$.  As long as there is no confusion, we also use the same notation 
$[u;\, \uy^t]$ to denote the corresponding element in $H^d_{(\uy)}(M)_Q$ after localization, where $Q$ 
is a prime ideal of $R$.
\end{notation}

The following result plays a critical role in the proof of Key Lemma~\ref{mainloclemma},  and consequently in the proofs
of Theorems~\ref{mainsmct} and~\ref{maine}.  
 
\begin{corollary}\label{Annihilator}  Let $R$ and $\fP$ be as in Corollary~\ref{rseqfilt}. 
Let $M$ be an $R$-module, and let $u \in M$ be an element such that both $M$ and $M/Ru$ are \Rpf. 
Also, let $R'$ be a flat extension of $R$ and let $\uy:= \vect y d  \in R'$ whose images form a possibly
improper regular sequence on $R'/\fP R'$. Consider $u \otimes 1_{R'} \in M \otimes_R R'$ and,
using Notation~\ref{lcnot} above,  consider
$[u \otimes 1_{R'};\uy] \in H^d_{(\uy)}(M \otimes_R R') \cong M \otimes_R H^d_{(\uy)}(R')$.
Then $\Ann_{R'}([u \otimes 1_{R'};\uy]) = \Ann_R(u)R' + (\uy)R'$.
\end{corollary}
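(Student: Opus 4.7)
My plan is to verify the two inclusions separately, with the nontrivial one routed through the quotient $R/\Ann_R(u)$. The containment $\Ann_R(u)R' + (\uy)R' \subseteq \Ann_{R'}([u\otimes 1_{R'};\uy])$ is essentially immediate: any $r \in \Ann_R(u)$ kills $u \otimes 1_{R'}$ in $M \otimes_R R'$, while each $y_i$ kills $[u\otimes 1_{R'};\uy]$ because $y_i(u\otimes 1_{R'})$ already lies in $(\uy)(M \otimes_R R')$, so its class in the level-one quotient $(M\otimes_R R')/(\uy)(M\otimes_R R')$ is zero.

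For the reverse inclusion I would first reduce to the case $R = R'$. By Proposition~\ref{reRpf}(j), both $M\otimes_R R'$ and $(M/Ru)\otimes_R R'$ are \Rp{R'}{\fP R'}, and flatness of $R \to R'$ gives $\Ann_{R'}(u\otimes 1_{R'}) = \Ann_R(u)R'$; so after relabeling it suffices to show that if $s \in R$ satisfies $s[u;\uy] = 0$ in $H^d_{(\uy)}(M)$, then $s \in \Ann_R(u) + (\uy)$. The next step is to apply Proposition~\ref{HdRpf} to the short exact sequence $0 \to Ru \to M \to M/Ru \to 0$: the hypotheses on $M$ and $M/Ru$ force $Ru$ to be \Rpf by Proposition~\ref{reRpf}(c)(ii), and the proposition yields the exact sequence
\[
0 \to H^d_{(\uy)}(Ru) \to H^d_{(\uy)}(M) \to H^d_{(\uy)}(M/Ru) \to 0.
\]
Since $su \in Ru$, the element $s[u;\uy] = [su;\uy]$ actually lives in $H^d_{(\uy)}(Ru)$ and must vanish there.

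Identifying $Ru$ with $R/\Ann_R(u)$ via $r\mapsto ru$ translates this into $[s;\uy] = 0$ in $H^d_{(\uy)}(R/\Ann_R(u))$. The key final observation is that $\uy$ is itself a possibly improper regular sequence on $R/\Ann_R(u)$: because $R/\Ann_R(u) \cong Ru$ is \Rpf and $\uy$ maps to a possibly improper regular sequence on $R/\fP$, this follows directly from the definition of \Rpf applied with the trivial flat extension $R' = R$. Once this is in hand, the standard fact recorded in Notation~\ref{lcnot}, that for a (possibly improper) regular sequence the natural map $N/(\uy)N \hookrightarrow H^d_{(\uy)}(N)$ is injective, forces $s \in (\uy)(R/\Ann_R(u))$, i.e.\ $s \in \Ann_R(u) + (\uy)$, as required. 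The main obstacle is recognizing that the \Rpf hypotheses on $M$ and $M/Ru$ were tailored precisely to make the passage through $Ru \cong R/\Ann_R(u)$ legitimate: without the exactness furnished by Proposition~\ref{HdRpf} and without \Rpf ensuring that $\uy$ descends to a regular sequence on $R/\Ann_R(u)$, the vanishing of $s[u;\uy]$ could not be diagnosed on the simpler module $R/\Ann_R(u)$.
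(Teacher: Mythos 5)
Your argument is correct and uses the same essential ingredients as the paper's proof: reduce to $R' = R$ by flatness and Proposition~\ref{reRpf}(j), observe that $Ru$ is \Rpf by Proposition~\ref{reRpf}(c), obtain the injection $H^d_{(\uy)}(Ru) \inj H^d_{(\uy)}(M)$ (the paper invokes Corollary~\ref{rseqfilt}, you invoke the underlying Proposition~\ref{HdRpf} directly), and then identify $Ru \cong R/\Ann_R(u)$ to finish. The paper organizes the annihilator calculation a bit more compactly by noting the cyclic generator $u\otimes\ol1$ of $Ru\otimes_R R/(\uy)\cong R/(\Ann_R(u)+(\uy))$ sits injectively inside $H^d_{(\uy)}(M)$, rather than splitting into two inclusions as you do, but the mathematics is the same.
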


\begin{proof}  Since $\Ann_R u$  commutes with the flat base change $R \to R'$ and since the 
pseudoflatness conditions are preserved
by that base change (cf. Proposition~\ref{reRpf}(j)), we may assume that $R' = R$.  
Note that the module $Ru$ is \Rpf as well by Proposition~\ref{reRpf}(c). Thus, by Corollary~\ref{rseqfilt}, we have an injection 
$Ru \otimes_R R/(\uy) \inj H^d_{(\uy)}(Ru) \inj H^d_{(\uy)}(M)$
that takes $u \otimes \ol{1}$ to $[u;\uy] \in H^d_{(\uy)}(M)$, where we agree that 
$u \otimes \ol{1} \in Ru \otimes_R R/(\uy)$. 
Thus $\Ann_{R}([u;\uy]) = \Ann_{R}(u \otimes \ol 1)$, which is $\Ann_{R}\big(Ru \otimes_R R/(\uy)\big)$
since $u \otimes \ol 1$ generates $Ru \otimes_R R/(\uy)$.
But $\Ann_R\big(Ru \otimes_R R/(\uy)\big)$
is immediate from the fact that $Ru \otimes_R R/(\uy) \cong R/\Ann_R(u) \otimes_R R/(\uy) \cong R/\big(\Ann_R(u)+(\uy)\big)$. 
\end{proof}

\begin{remark}\label{annrmk} Of course, in Corollary~\ref{Annihilator}, we may also apply the 
result with $\uy$ replaced by $\uy^t = y_1^t, \, \ldots, y_d^t$, which is also a regular sequence, 
and obtain $\Ann_{R'}([u \otimes 1_{R'};\uy^t]) = \Ann_R(u)R' + (\uy^t)R'$. 
\end{remark}

The next corollary combines results in this section with the main result of \S\ref{genld} (Theorem~\ref{main}):

\begin{corollary}\label{HHom-exact}
Let $R,\,P,\,E,\,W$ be as in Theorem~\ref{main}. Let $\Delta$ be a finite set and 
consider a family of complexes $\{M_{i1} \to M_{i2} \to M_{i3}\}_{i \in \Delta}$, where for all $i \in \Delta$ and
$1 \leq j \leq 3$  the $M_{ij}$ are  finitely generated $R$-modules. After we localize at one element
of $W$ and change notation to call the localized ring $R$, for all flat $R$-algebras $R'$ and $\uy = \vect yd \in R'$ 
whose images form a possibly improper regular sequence on $R'/PR'$, the functor 
$H^d_{(\uy)}\big(\hom(\blank \otimes_R R',\, E \otimes_R R')\big)$ commutes with taking homology at the $M_{i2}$ 
spot for all $i \in \Delta$.  Hence, $H^d_{(\uy)}\big(\hom(\blank \otimes_R R', E \otimes_R R')\big)$ commutes with 
taking homology of any finite collection of finite complexes of finitely generated $R$-modules 
after localizing at one element of $W$ that is dependent on the set of complexes.
\end{corollary}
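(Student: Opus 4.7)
The plan is to apply Theorem~\ref{main} twice to set up a diagram chase that is then preserved, both through the flat base change $R \to R'$ and through $H^d_{(\uy)}$, by the results of \S\ref{Rpf}. For each $i \in \Delta$ I will write $f_i \colon M_{i1} \to M_{i2}$ and $g_i \colon M_{i2} \to M_{i3}$ for the complex maps, set $H_i := \ker(g_i)/\image(f_i)$, and consider the following finite list of short exact sequences of finitely generated $R$-modules, ranging over $i \in \Delta$:
\begin{align*}
&0 \to \ker(f_i) \to M_{i1} \to \image(f_i) \to 0,\\
&0 \to \image(f_i) \to \ker(g_i) \to H_i \to 0,\\
&0 \to \ker(g_i) \to M_{i2} \to \image(g_i) \to 0,\\
&0 \to \image(g_i) \to M_{i3} \to \coker(g_i) \to 0.
\end{align*}

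First I would apply Theorem~\ref{main}(a) to localize at one element of $W$ so that $\hom_R(\blank, E)$ is exact on this entire collection, and then apply Theorem~\ref{main}(d) to each of the finitely many finitely generated modules $N$ appearing in these sequences, localizing at one further element of $W$ so that every $\hom_R(N, E)$ becomes \ff, and hence \Rpp by Proposition~\ref{reRpf}(g). Now, fixing any flat $R$-algebra $R'$ and $\uy \in R'$ with regular image in $R'/PR'$, I set $f_i^* := \hom(f_i, E)$ and $g_i^* := \hom(g_i, E)$. The dualized sequences identify $\image(g_i^*)$ with $\hom(\image(g_i), E) \inc \hom(M_{i2}, E)$, and exhibit $\ker(f_i^*)$ as the preimage in $\hom(M_{i2}, E)$ of $\hom(H_i, E) \inc \hom(\ker(g_i), E)$, giving a short exact sequence
\[
0 \to \hom(\image(g_i), E) \to \ker(f_i^*) \to \hom(H_i, E) \to 0
\]
whose outer terms are \Rpp, so the middle term is as well by Proposition~\ref{reRpf}(c)(i). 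Proposition~\ref{reRpf}(j) then transports every pseudoflatness condition to $R'$, and Proposition~\ref{HdRpf}, applied to the displayed sequence together with
\[
0 \to \ker(f_i^*) \to \hom(M_{i2}, E) \to \hom(\image(f_i), E) \to 0 \text{ and } 0 \to \hom(\coker(g_i), E) \to \hom(M_{i3}, E) \to \hom(\image(g_i), E) \to 0,
\]
shows that $H^d_{(\uy)}(\blank \otimes_R R')$ preserves their exactness. A short diagram chase then identifies the homology of the dualized, locally cohomologized complex at the $H^d_{(\uy)}(\hom(M_{i2}, E) \otimes_R R')$ spot with $H^d_{(\uy)}(\hom(H_i, E) \otimes_R R')$, which is precisely the commutation claimed.

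The main point requiring care, though not really hard, is that a \emph{single} localization at $g \in W$ should work uniformly over all flat $R'$ and all choices of $\uy$; this is automatic, since once the \Rpp conditions and the $\hom_R(\blank, E)$-exactness have been arranged on the $R$-side for the fixed finite list of sequences derived from $\Delta$, Propositions~\ref{reRpf}(j) and~\ref{HdRpf} transport them through any flat base change $R \to R'$ and through $H^d_{(\uy)}$. The final clause concerning arbitrary finite collections of finite complexes then follows by regarding each middle spot of each complex as part of its own length-two subcomplex and invoking the case just established for the resulting (still finite) enlarged family of length-two complexes.
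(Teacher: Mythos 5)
Your proposal is correct and follows essentially the same route as the paper's proof: reduce the claim to preservation of exactness of a finite family of short exact sequences, localize once using Theorem~\ref{main}(a) and (d) so that the $\Hom_R(\blank, E)$-duals are exact and free filterable (hence \Rpp by Proposition~\ref{reRpf}(g)), transport through the flat base change $R \to R'$ (Proposition~\ref{reRpf}(j)), and then apply Proposition~\ref{HdRpf}. You simply spell out in explicit detail the finite family of short exact sequences and the diagram chase that the paper compresses into its opening sentence.
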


\begin{proof}
The information about the complexes  and their homology is determined by a finite family of short exact
sequences. Thus, it suffices to prove that exactness is preserved when we apply
the functor to one short exact sequence.  By parts~(a) and (d) of Theorem~\ref{main}, after we localize at one 
element of $W$,  when we apply $\Hom_{R'}(\blank \otimes_R R', \, E \otimes_R R')$ we get a short exact 
sequence of $R'$-modules that are all \ffr $PR'$ and hence are all \Rp{R'}{PR'}. The result now follows from 
part~(g)  of Proposition~\ref{reRpf} and Proposition~\ref{HdRpf}, applied over $R'$.
\end{proof}

\begin{discussion}[A non-Noetherian acyclicty criterion] 
We first observe that the acyclicity criterion in  \cite{BuEi73} is valid without Noetherian assumptions.  
This is shown in \cite[\S6]{Nor76}, and a much more concise treatment is available in \cite{Ho02}.  
One needs to use a notion of depth based on vanishing of Koszul homology:  this agrees with the usual notion in the 
Noetherian case. Given a finitely generated ideal $I$ of an arbitrary ring $R$ and an $R$-module $M$,
define the \emph{Koszul depth} of $M$ on $I = (\vect r n)$ to be $d$,  where $d$ is the least value of $i \in \N$
such that  $H_{n-i}(\vect r n;\, M) \not=0$.  This is independent of the choice of generators of $I$. \end{discussion}

\begin{theorem}[Acyclicity Criterion]\label{acyc}  Let $R$ be an arbitrary ring, not necessarily Noetherian.
Let $G_\bu$ denote a finite complex of finite rank nonzero free modules over $R$,  say 
$$
G_\bullet: \qquad 0 \xra{\quad} R^{b_n} \xra{\quad} \cdots \xra{\quad} R^{b_1} \xra{\quad} R^{b_0} \xra{\quad} 0.
$$
Let $N \not=0$ be an arbitrary $R$-module.  Let $\alpha_i$ denote the
matrix of the map $G_i \to G_{i-1}$,  $1 \leq i \leq n$.  Let $r_0 = 0$ and for $i \geq 1$, 
let $r_i$ denote the determinantal rank of $\alpha_i$ modulo $\Ann_R N$. 
Then $G_\bu \otimes_R N$ is acyclic if and only if the following
two conditions hold:
\benn
\item  $b_i = r_i + r_{i-1}$,  for all $1 \leq i \leq n$. 
\item The Koszul depth of $N$ on $I_{r_i}(\alpha_i) \geq i$, for all $1 \leq i \leq n$. 
\een
\end{theorem}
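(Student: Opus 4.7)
The plan is to prove both directions following the classical Buchsbaum--Eisenbud strategy, but with Koszul homology replacing associated primes and regular sequences throughout, so no Noetherian hypothesis is needed. As preliminaries I would establish three facts: (i) for any short exact sequence $0 \to M' \to M \to M'' \to 0$ and finitely generated ideal $I$, the Koszul depth of $M'$ on $I$ is at least $\min\{\mathrm{kd}(I,M),\ \mathrm{kd}(I,M'')+1\}$; (ii) the vanishing pattern of Koszul homology $H_\bullet(\vect rn; M)$ depends only on the ideal $(\vect rn)$, not on the generating set, so Koszul depth is well-defined; and (iii) one may replace $R$ by $R/\Ann_R N$ without altering either the hypotheses or the conclusion, so we may assume $N$ is a faithful $R$-module.

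For the forward direction, assume $G_\bullet \otimes_R N$ is acyclic. The rank condition (1) follows from a direct splitting argument: an acyclic bounded complex of free modules (after tensoring with a faithful module, modulo the annihilator) splits into short exact sequences stage by stage, forcing $b_i = r_i + r_{i-1}$, where $r_i$ is the expected rank as defined. For condition (2), I would argue by downward induction on $i$. At the bottom end, injectivity of $\alpha_n \otimes \mathrm{id}_N$ forces $I_{r_n}(\alpha_n)$ to have positive Koszul depth on $N$, via the non-Noetherian McCoy principle. For $i < n$, splitting $G_\bullet$ at spot $i$ yields short exact sequences of cycles and boundaries; feeding these into the depth inequality (i) iteratively yields $\mathrm{kd}(I_{r_i}(\alpha_i), N) \geq i$.

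The backward direction is the heart of the argument and proceeds by induction on $n$. The base case $n = 1$ is McCoy's theorem in Koszul-depth form: positive Koszul depth on $I_{r_1}(\alpha_1)$ makes $\alpha_1 \otimes \mathrm{id}_N$ injective, giving acyclicity. For the inductive step, one factors $\alpha_n$ through a free module of rank $r_n$ to produce a complex $G_\bullet'$ of length $n-1$ whose rank and determinantal-ideal data satisfy analogous hypotheses; the technical core is to verify that the Koszul-depth conditions transfer. A Koszul-depth version of the Peskine--Szpiro acyclicity lemma---obtained by reading (i) as the statement that a would-be nonzero homology module at spot $i$ cannot have Koszul depth as large as $i$ on the relevant ideal---then propagates vanishing of homology up the complex.

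The main obstacle will be the non-Noetherian adaptation of the Peskine--Szpiro acyclicity lemma and the verification that the truncated complex in the inductive step carries the correct Koszul-depth profile on the transformed determinantal ideals. In the Noetherian setting these are transparent via associated primes and Matlis duality; here every such appeal must be replaced by an argument at the level of Koszul homology, relying only on (i), (ii), and the fact that Koszul homology commutes with direct limits. Once that translation is made uniformly, the rest of the classical Buchsbaum--Eisenbud induction carries over \emph{mutatis mutandis}.
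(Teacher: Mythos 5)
The paper does not prove Theorem~\ref{acyc}: it states the criterion as a known result and cites the proof to Northcott \cite{Nor76} and Hochster \cite{Ho02}, after recalling the definition of Koszul depth. There is therefore no internal proof to compare against. Your outline does reflect the strategy used in those references: transplant the Buchsbaum--Eisenbud induction into a Noetherian-free setting by replacing associated primes, prime avoidance, and local duality with Koszul homology throughout. The preliminary facts you list (the short-exact-sequence inequality for Koszul depth, independence of the generating set, and passage to $R/\Ann_R N$) are correct and are exactly the needed tools.

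As a proof, however, what you have submitted is a road map with two real gaps, the second of which you acknowledge but do not close. In the forward direction, the rank condition cannot be obtained by ``splitting into short exact sequences stage by stage'': $G_\bullet \otimes_R N$ is a complex of modules $N^{b_i}$ that does not split, and the $r_i$ are \emph{determinantal} ranks of $\alpha_i$ modulo $\Ann_R N$, not ranks of kernels or images. What actually forces $b_i = r_i + r_{i-1}$ is an iterated McCoy-type argument; beyond the top stage (injectivity of $\alpha_n \otimes \mathrm{id}_N$ forces $r_n = b_n$), one needs control of the lower determinantal ideals, and this already entangles the rank condition with the Koszul-depth condition. In the backward direction, the reduction ``factor $\alpha_n$ through a free module of rank $r_n$'' and the transfer of the Koszul-depth hypotheses to the truncated complex are exactly where the hard work lives; without Noetherian hypotheses this is handled in the cited sources by passing to a flat polynomial extension to manufacture generic nonzerodivisors in the determinantal ideals, or by establishing a Koszul-homology form of the Peskine--Szpiro acyclicity lemma. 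You identify this as the main obstacle but leave it open, so the proposal is the correct strategy but not yet a proof.
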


\begin{remark}\label{rank} Note that condition (1) is equivalent to the assumption that 
$r_n = b_n$, $r_{n-1}  = b_n - b_{n-1}$, 
$r_{n-2} = b_n-b_{n-1}+ b_{n-2}$, $\ldots$, $r_{n-i} = \sum_{t=0}^i (-1)^t b_{n-t}$, $\ldots$, 
and $r_1 = b_n - b_{n-1} + \cdots + (-1)^{n-1}b_1$.  \end{remark}

\begin{remark}\label{nzdrank} Consider a complex $G_\bu$ as in Theorem~\ref{acyc}, and let the $r_i$ be as in
Remark~\ref{rank}.  We claim that if $R$ has Koszul depth at least one  on each of the ideals $I_{r_i}(\alpha_i)$, 
where $1 \leq i \leq n$, then it is automatic that the ideals $I_{r_i+1}(\alpha_i)$ are all $0$, $1 \leq i \leq n$.  
To see this, note that after a faithfully flat extension
of $R$, each of the ideals $I_{r_i}(\alpha_i)$, $1 \leq i \leq n$, will contain a nonzerodivisor.  Inverting the 
product of these does not affect the issue, so that we may assume that each $I_{r_i}(\alpha_i)$ is the 
unit ideal.  We use induction on $n$. The case $n=1$ is obvious. Assume $n \geq 2$.
The issue is local on $R$, so that we may assume that $R$ is qasilocal.  After changes of basis 
in $G_{n-1}$, which do not affect the issue, we may assume that $\alpha_n$ has the block form 
$\bpm \id_{b_n} \\ 0 \epm$,  where the $0$ block has size $(b_{n-1} - b_n) \times b_n$. 
Now it is clear that $I_{r_n+1}(\alpha_n) = I_{b_n+1}(\alpha_n) = 0$.
The fact that  $G_\bullet$ is a complex implies that $\alpha_{n-1}$ has the block form 
$\bpm 0_{b_{n-2} \times b_n} & \alpha_{n-1}' \epm$, where $\alpha_{n-1}'$ is $b_{n-2} \times (b_{n-1}-b_n)$.   
That is, $G_\bullet$ is the direct sum of a complex of length $n-1$, namely 
$$
G'_\bullet: \qquad 0 \xra{\quad} R^{b_{n-1} -b_n} \arwf{\alpha_{n-1}'} R^{b_{n-2}} \arwf{\alpha_{n-2}} \dotsb  
\arwf{\ \alpha_2\ } R^{b_1} \arwf{\ \alpha_1\ } R^{b_0} \xra{\quad} 0$$
and a complex with nonzero terms only at the spots indexed by $n$ and $n-1$, namely 
$$0 \xra{\quad} R^{b_n} \arwf{\ \id_{b_n}\ } R^{b_n} \xra{\quad} 0.$$ 
Then it is routine to see that $I_{j}(\alpha_{n-1}) = I_{j}(\alpha_{n-1}')$ for all $j$. 
In particular, we have $I_{r_{n-1}}(\alpha_{n-1}) = I_{b_{n-1}-b_n}(\alpha_{n-1}')$.
Now the remaining claims, i.e., $I_{r_i+1}(\alpha_i)=0$ for $i = 1, \dotsc,n-1$, follow from the induction 
hypothesis on $G'_\bullet$.
\end{remark}

We are now ready to state the result mentioned earlier.  

\begin{theorem} \label{Thm:acyclicity}Let $R$ be a Noetherian ring, $P$ a prime ideal of $R$,  and $A:=R/P$.
Let $G_{\bullet}$ be  complex of finitely generated free $R$-modules of finite length $n$ such
that $A \otimes_R G_{\bullet}$ is acyclic.  Let $\cH$ be an $R$-module and assume at least one of the following
\benn
\item $\cH$ is \ff. 
\item $\cH$ is a direct limit of modules each of which has a finite filtration with $A$-flat factors.  
\item $\cH$ is \Rpp and $P$ contains all primes in $\Ass_R(R)$ (the latter holds, for example, if $R$ is a domain). 
\een
Then $\cH \otimes_R G_\bullet$ is acyclic.
\smallskip

In particular, under the hypotheses of Notation~\ref{not}, if $\fra(A) \otimes_R G_\bu$ is acyclic, then 
given a finitely generated $R$-module $M$, 
after localizing at one element of $W$, we have that 
$H^j_P(M) \otimes G_\bu$ is acyclic. \end{theorem}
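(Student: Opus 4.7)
My plan is to treat cases (1) and (2) by a direct filtration-and-direct-limit argument, to handle case (3) by verifying the Acyclicity Criterion (Theorem~\ref{acyc}) for $\mathcal{H}$, and to derive the concluding assertion by combining Theorem~\ref{main}(f) with case (1). For cases (1) and (2), the key observation is that any $A$-flat module $F$ satisfies $F \otimes_R G_\bu \cong F \otimes_A (A \otimes_R G_\bu)$, which is acyclic by the hypothesis on $A \otimes_R G_\bu$ together with flatness of $F$ over $A$. Note that case~(1) is subsumed by case~(2) via Lemma~\ref{lem:ff}, since we may refine the filtration of a \ff module to have $A$-free, hence $A$-flat, factors. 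Each term of $G_\bu$ is $R$-flat, so a short exact sequence of $R$-modules tensors to a short exact sequence of complexes, yielding a long exact homology sequence that lets us induct along a finite $A$-flat filtration. Finally, since both tensor product and homology commute with direct limits, $\mathcal{H} \otimes_R G_\bu$ is acyclic.

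For case (3), applying Theorem~\ref{acyc} to the acyclic complex $A \otimes_R G_\bu$ yields that $b_i = r_i + r_{i-1}$, where $r_i$ is the determinantal rank of $\alpha_i$ modulo $P$, and that the Koszul depth of $A$ on each $I_{r_i}(\alpha_i)$ is at least $i$. In particular each $I_{r_i}(\alpha_i)$ contains an element of $R \setminus P$, which by the hypothesis on associated primes is a nonzerodivisor on $R$; Remark~\ref{nzdrank} then forces $I_{r_i+1}(\alpha_i) = 0$, so the determinantal rank of $\alpha_i$ over $R$ equals $r_i$. To verify condition~(1) of the Acyclicity Criterion for $\mathcal{H}$, I need the rank of $\alpha_i$ modulo $\Ann_R \mathcal{H}$ to equal $r_i$; the upper bound is immediate from the rank over $R$, and for the lower bound I note that any $f \in I_{r_i}(\alpha_i) \setminus P$ is a possibly improper regular sequence of length one on $\mathcal{H}$ by pseudoflatness with $R' = R$, so $f\mathcal{H} \neq 0$ (the case $\mathcal{H} = 0$ being trivial). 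For condition~(2), I lift a regular sequence on $A$ of length $i$ inside $I_{r_i}(\alpha_i)$ to elements $f_1, \ldots, f_i \in R$; pseudoflatness makes $f_1, \ldots, f_i$ a possibly improper regular sequence on $\mathcal{H}$, and the standard Koszul computation (an exact Koszul complex tensored with a bounded complex of free modules remains exact) gives $H_j(f_1, \ldots, f_i; \mathcal{H}) = 0$ for $j \geq 1$, whence the Koszul depth of $\mathcal{H}$ on $(f_1, \ldots, f_i) \subseteq I_{r_i}(\alpha_i)$ is at least $i$. The main obstacle is precisely this step: pseudoflatness places no direct constraint on $\Ann_R \mathcal{H}$, so the agreement of determinantal ranks must be extracted indirectly via Remark~\ref{nzdrank}, and improper regular sequences must be handled carefully in computing Koszul depth.

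For the concluding assertion, Theorem~\ref{main}(f) gives, after localizing at one element of $W$ depending on $M$, that each $H^j_P(M)$ is \ff. Separately, since $A \otimes_R G_\bu$ is a finite complex of finite rank free $A$-modules whose finitely generated homology vanishes upon tensoring with $\fra(A)$, each homology module is killed by some nonzero element of the domain $A$; the product of these finitely many annihilators is a single $\bar g \in A \setminus \{0\}$ with $A_{\bar g} \otimes_R G_\bu$ acyclic, and we lift $\bar g$ to $W$ and localize further. Case~(1) of the theorem then delivers the acyclicity of $H^j_P(M) \otimes_R G_\bu$.
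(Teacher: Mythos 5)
Your proof is correct and follows essentially the same route as the paper's: cases (1) and (2) via a filtration/direct-limit argument using that $A$-flat modules preserve acyclicity of $A \otimes_R G_\bullet$, case (3) by verifying the two conditions of the Acyclicity Criterion (Theorem~\ref{acyc}) using pseudoflatness and Remark~\ref{nzdrank}, and the final assertion from Theorem~\ref{main}(f) plus case (1). Two small notes: the definition of \ff already provides $A$-free factors, so no refinement and no appeal to Lemma~\ref{lem:ff} (which runs in the opposite direction) is needed to subsume (1) in (2); and since $W$ is not assumed multiplicatively closed, you should invoke Remark~\ref{inv} (or reduce to $W = R \sm P$ at the outset, as the paper does) to replace the product $g_1 g_2$ of your two localizing elements by a single element of $W$.
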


\begin{proof}  Note that condition~(1) implies condition~(2).  Assume that (2) holds, so that
$\cH$ is direct limit of modules $\cH_\lambda$ each of which has a finite filtration by flat 
$A$-modules.  Since  $A\otimes_R G_\bu$ is acyclic, so is the tensor product with any flat $A$-module,
and then one has that the tensor product with each $\cH_\lambda$ is acyclic by iterated use of the snake lemma.  
By taking a direct limit, one sees that the tensor product with $\cH$ is acyclic.  

We now prove that condition (3) is sufficient for acyclicity.  Assume that $\cH$ is \Rpp.  
We adopt the detailed description of $G_\bullet$ given in 
the statement of Theorem~\ref{acyc}.  Let the $r_i$ be determined from the $b_i$ as in Remark~\ref{rank}.  
The hypothesis that $A \otimes_R G_{\bullet}$ is acyclic shows that for each $i$, $1 \leq i \leq n$,
the ideal of $r_i$ minors of $\alpha_i$, when passed to $A$, is either the unit ideal $A$ or contains an 
$A$-regular sequence oflength at least $i$. The ideals $I_{r_i}(\alpha_i)$ over $R$ therefore 
each contain an element  $f_i \in R \setminus P$,  which is a nonzerodivisor on $R$. By Remark~\ref{nzdrank}
and Remark~\ref{rank},  these numbers $r_i$ are indeed the determinantal rank of $\alpha_i$ on $\cH$ for 
$1 \leq i \leq n$, 
which allows us to conclude that condition~(1) of Theorem~\ref{acyc} holds for $\cH \otimes_R G_\bullet$.  
Moreover, it follows that each $I_{r_i}(\alpha_i)$ contains $i$ elements of $R$
that map to a possibly improper regular sequence in $A$.   
Hence, these elements form a possibly improper regular sequence on $\cH$, 
which verifies that condition~(2) of Theorem~\ref{acyc} holds for $\cH \otimes_R G_\bullet$.
The acyclicity of $\cH \otimes_R G_\bullet$ now follows from Theorem~\ref{acyc}.

For the final statement on $H^j_P(M) \otimes G_\bu$, first note that because $H^j_P(M)$ is supported
only at $P$,  localizing at an element  $g \in W$ yields the same result for $H^j_P(M) \otimes G_\bu$  
if $g$ is replaced by an element
of $R \sm P$ with the same image as $g$ in $R/P$.  Therefore we can carry out the proof with $W = R \sm P$.
The result  follows because if $\fra(A) \otimes_R G_\bu$ is acyclic, then after replacing
$R$ by its localization at one element of $W$, the complex $A \otimes_R G_\bu$ becomes acyclic, and
by Theorem~\ref{main}, $H^j_ P(M)$ is \ff after localizing at one element
of $W$.   Now we may apply part~(1) of this theorem.
\end{proof}

\section{Generic behavior of injective hulls}\label{hull}  

In this section we study generic behavior for homomorphic images of rings that satisfy 
Notation~\ref{not}.  Since our focus is on the homomorphic image, we change notations as indicated 
just below. We start with a ring that satisfies Notation~\ref{not}, but we denote this ring by $\tR$, so that 
Theorem~\ref{main} holds for $\tR$ and $\tP \in \spec(\tR)$. The focus of this section is $R:=\tR/\fa$, 
where $\fa$ is an ideal of $\tR$ such that $\fa \subseteq \tP$. Denote $P := \tP/\fa \in \spec(R)$. 
Because we want $R_P$ to have Krull dimension $h$,  we will use $\tth$ 
for the Krull dimension of $\tR_{\tP}$. In the first subsection we discuss what happens without taking 
a quotient.  The second subsection deals with 
the case where we work with $R = \tR/\fa$.  The third subsection 
is focused on the case when $R_P$ is \stwo. Note that we do not need to change the notation for $A$,
since $\disp A = \tR/\tP \cong \frac {\tR/\fa}{\tP/\fa}$.

\begin{notation}\label{notat} Let $\tR$, $\tP$, $A:= \tR/\tP$, $\tth: = \height(\tP)$, and $\tom$ be as in 
Notation~\ref{not}. That is, let $\tR$ be a Noetherian ring, $\tP$ a prime ideal of $\tR$ such that $\tR_\tP$ 
is Cohen-Macaulay of Krull dimension $\tth$.  Assume also that $\tom$ is a finitely generated $\tR$-module 
such that $\tom_\tP$ is a canonical module for $\tR_\tP$. Let $\tE := H^\tth_{\tP}(\tom)$ and 
$\tW := \tR \sm \tP$. For every prime $\fp$ of $\tR$ let $\kappa_\fp$ denote $\tR_\fp/\fp \tR_\fp$, which is 
naturally isomorphic with $\fra(\tR/\fp)$. Also, denote $P :=\tP/\fa$, which is a prime of $R$. However, we 
do \emph{not} assume that $R_P$ is \stwo in this section until subsection~\ref{s2}. 
\end{notation}

Note that, for simplicity,  we are taking $\tW$ to be $\tR \sm \tP$ itself rather than a multiplicative system 
in $\tR$ or a subset of $\tR$ that naturally maps onto $A\setminus \{0\}$. 

Consider the following conditions:
\ben[label=(E\arabic*)]
\item \label{E1} The regular locus in $\Spec(A)$  has non-empty interior, i.e., there exists $g_1 \in \tW$ such that $A_{g_1}$ is
regular.
\item \label{E2} There exists $g_2 \in \tW$ such that  $\tR_{g_2}$ is \CM and $\tom_{g_2}$ is a global canonical 
module for $\tR_{g_2}$. 
\item \label{E3} If $R_P$ is \stwo then $P$ is in the interior of the \stwo locus of $R$; that is, if $(\tR/\fa)_\tP$ is \stwo then there exists $g_3 \in \tW$ such that  $(\tR/\fa)_{g_3}$ is \stwo. 
\een

\begin{remark}\label{S2} By \cite[Cor. 5.10.9]{EGAIV67},  every local ring that is catenary and \stwo is equidimensional.
By \cite[Prop. 6.11.8]{EGAIV67},  the \stwo locus is open in an excellent ring. 
\end{remark}

\begin{proposition}\label{e12} If $\tR_g$ is excellent for some $g \in \tW$, then \ref{E1}, \ref{E2} and \ref{E3} all hold. 
\end{proposition}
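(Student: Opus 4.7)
The plan is to first localize at $g$ so that $\tR$ itself becomes excellent. Any $g_i' \in \tR_g \sm \tP\tR_g$ we subsequently produce can be written $g_i' = r/g^n$ with $r \in \tR \sm \tP$, so $gr \in \tW$ and $\tR_{gr} \cong (\tR_g)_{g_i'}$; thus verifying (E1)--(E3) in $\tR_g$ yields the analogous statements in $\tR$. Since excellence descends to homomorphic images, both $A = \tR/\tP$ and $R = \tR/\fa$ are excellent after this reduction.

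For (E1), the regular locus of the excellent domain $A$ is open, and it contains the generic point (whose localization is the field $\kappa_{\tP}$), hence it is a nonempty open subset of $\Spec(A)$. Thus there is some $\bar g_1 \in A \smo$ with $A_{\bar g_1}$ regular; any lift $g_1 \in \tR$ of $\bar g_1$ automatically lies in $\tW$, and $A_{g_1} = A_{\bar g_1}$ is regular. Statement (E2) is a direct citation of Remark~\ref{open}: since $\tR$ is excellent and $\tom_{\tP}$ is a canonical module for $\tR_{\tP}$, that remark (via the Nagata idealization argument) produces $g_2 \in \tW$ making $\tR_{g_2}$ Cohen-Macaulay and $\tom_{g_2}$ a global canonical module. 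For (E3), suppose $R_P$ is S$_2$; by Remark~\ref{S2} the S$_2$ locus of $\Spec(R)$ is open and, by assumption, contains $P$, so there exists $\bar g_3 \in R \sm P$ with $R_{\bar g_3}$ being S$_2$. Any lift $g_3 \in \tR$ of $\bar g_3$ lies in $\tW$ (as $\bar g_3 \notin P = \tP/\fa$), and $(\tR/\fa)_{g_3} = R_{\bar g_3}$ is S$_2$.

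There is no real obstacle: the whole argument is a synthesis of the openness statements for the regular, Cohen-Macaulay (with compatible canonical module), and S$_2$ loci in excellent rings already recorded in Remarks~\ref{open} and~\ref{S2}, together with the elementary reduction to the excellent case via an initial localization. The only place where a non-tautological fact from the earlier material is used is (E2), where one needs the idealization trick from Remark~\ref{open} to move from the canonical module property at $\tP$ to one on a whole Zariski neighborhood.
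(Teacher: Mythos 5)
Your proof is correct and follows essentially the same route as the paper: reduce to the case where $\tR$ is excellent (and note the quotients $A$ and $R$ remain excellent), then for (E1) invoke openness of the regular locus plus the fact that the generic point of the excellent domain $A$ is regular, for (E2) cite Remark~\ref{open}, and for (E3) cite Remark~\ref{S2}. The only difference is that you spell out more explicitly the reduction from $\tR$ to $\tR_g$ and the lift-back of $g_i'$ to $\tW$, which the paper leaves implicit.
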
 
\begin{proof}  It suffices to assume that $\tR$ is excellent. Then (E1) is clear, since $A = \tR/\tP$ 
is an excellent domain. For (E2), we can localize at an element of $\tW$
so that $\tR$ is \CM, and so that $\tom$ is a global canonical module.   See Remark~\ref{open}. 
Finally, (E3) follows from Remark~\ref{S2}. 
\end{proof}

\subsection{Generic behavior of injective hulls near \texorpdfstring{$\tP$ if $\tR_\tP$ is \CM}{a prime ideal in \CM locus}}\label{gebE1}
We shall show how to obtain injective hulls for quotients $\tR/\tQ$ over $\tR$ for all prime ideals $\tQ$ 
in an open neighborhood of $\tP$ in $V(\tP)$.  

\begin{theorem}\label{Ecm} Let $\tR$, $\tP$, $\tth$, $A:=\tR/\tP$, $\tW : = \tR \sm \tP$, $\tom$, and $\tE$ be 
as in Notation~\ref{notat}.  Assume that $\tR$ is excellent {\it or} that conditions \ref{E1} and \ref{E2} above hold. 
  Then we can localize at one element $g \in \tW$ such that after replacing
$\tR$, $\tP$, $\tom$, and $\tE$ by their localizations at $g$, all the following statements hold:
\bena  
\item The ring $A$ is regular, the ring $\tR$ is \CM, and the module $\tom$ is a canonical module for $\tR$, 
i.e., for all $\tQ \in \Spec(\tR)$,  $\tom_\tQ$ is a canonical module for $\tR_\tQ$.   
\item $\Ann_\tE \tP \cong A$, so that we have an injection $0 \xra{\ \ } A \inj \tE$. 
\een

In all of the remaining parts, assume that we have localized at an element of $\tW$ so that conditions
{\rm(a)} and {\rm(b)} hold. 
Moreover, in the remaining parts, we place the following condition \eqref{Ecm-sharp} on $\tQ$ and $\uy= \vect yd$:
\[\tag{$\natural$}\label{Ecm-sharp}
\begin{split}
\qquad &\text{The ideal $\tQ \in \Spec(\tR)$  is in $V(\tP)$ and the sequence $\uy$  in $\tQ$ maps}\\
&\text{to a \sop  for  $A_\tQ \cong A_Q$.}
\end{split}
\]
Then we have the following:
\benr
\item After localization at one element of $\tW$, for all $\tQ$ and $\uy$ satisfying \eqref{Ecm-sharp}, 
the module $H^d_{(\uy)}(\tE)_\tQ$ is an injective hull for $\tR_\tQ/\tQ R_\tQ$ over $\tR_\tQ$, i.e., $H^d_{(\uy)}(\tE)_\tQ \cong \E_\tR(\tR/\tQ)$. 
\item For any given finitely generated $\tR$-module $M$,  after localizing at one element of $\tW$, for
all  $\tQ$ and $\uy$ satisfying \eqref{Ecm-sharp}, the natural map 
$$\Phi_M:H^d_{(\uy)}\big(\hom_\tR(M, \tE)\big)_\tQ \xra{\qquad} \hom_\tR\big(M, H^d_{(\uy)}(\tE)\big)_\tQ$$ 
is an isomorphism. 
\item After localizing at one element of $\tW$, for all $\tQ$ and $\uy$ satisfying \eqref{Ecm-sharp}, the natural map  $H^d_{(\uy)}(\Ann_\tE \tP)_\tQ \inj H^d_{(\uy)}(\tE)_\tQ$ is an injection, 
and a socle generator for $H^d_{(\uy)}(\Ann_\tE \tP)_\tQ \cong H^d_{(\uy)}(A)_\tQ$ over $A_\tQ$ maps to 
a socle generator for $H^d_{(\uy)}(\tE)_\tQ$ over $\tR_\tQ$.  
\een
\end{theorem}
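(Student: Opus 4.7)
The plan is to choose a single element $g \in \tW$ so that, after localizing at $g$, all five assertions hold simultaneously. Part (a) follows from the stated hypotheses together with Proposition~\ref{e12}, and part (b) is immediate from Theorem~\ref{main}(b) applied with $M = R/\tP = A$. From here on I assume we have localized so that $\tP = \Rad(\ux)$ for a sequence $\ux = x_1,\dotsc,x_{\tth}$ forming a regular sequence on both $\tR$ and $\tom$; consequently $\tE = H^{\tth}_{(\ux)}(\tom)$ and $H^j_\tP(\tom) = 0$ for $j \neq \tth$.

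For part (c), fix $\tQ$ and $\uy$ satisfying \eqref{Ecm-sharp} and localize at $\tQ$. Because $\Gamma_{(\uy)+\tP} = \Gamma_{(\uy)} \circ \Gamma_\tP$ and $\Gamma_\tP$ carries injectives to injectives, the Grothendieck composite-functor spectral sequence
\[
\bE_2^{i,j} = H^i_{(\uy)}\bigl(H^j_\tP(\tom)\bigr)_\tQ \ \Longrightarrow\ H^{i+j}_{(\uy)+\tP}(\tom)_\tQ
\]
collapses onto the row $j = \tth$, yielding $H^d_{(\uy)}(\tE)_\tQ \cong H^{\tth+d}_{(\uy)+\tP}(\tom)_\tQ$. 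Since $\uy$ is a system of parameters for $A_\tQ$, which is regular of dimension $d$ by (a), the ideal $(\uy)+\tP$ is $\tQ\tR_\tQ$-primary in $\tR_\tQ$, and so the right-hand side equals $H^{\tth+d}_{\tQ\tR_\tQ}(\tom_\tQ)$. By (a), $\tR_\tQ$ is Cohen-Macaulay of dimension $\tth+d$ and $\tom_\tQ$ is canonical for it, so classical local duality identifies this top local cohomology with $\E_{\tR_\tQ}(\kappa_\tQ)$.

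For part (d), I would take a finite presentation $G_1 \to G_0 \to M \to 0$ over $\tR$ and apply Corollary~\ref{HHom-exact} to the three-term complex $G_1 \to G_0 \to 0$, whose homology at the middle spot is $M$. After one further localization in $\tW$ depending only on $M$, the functor $H^d_{(\uy)}\bigl(\hom(\blank \otimes_\tR \tR_\tQ,\ \tE \otimes_\tR \tR_\tQ)\bigr)$ commutes with that homology. Since each $G_i$ is finitely generated free, the canonical identifications $H^d_{(\uy)}\bigl(\hom(G_i, \tE)\bigr) = \hom\bigl(G_i, H^d_{(\uy)}(\tE)\bigr)$ hold, and computing the homology of the transformed complex at the $G_0$ spot gives $\hom(M, H^d_{(\uy)}(\tE))_\tQ$; by naturality this matches $H^d_{(\uy)}(\hom(M, \tE))_\tQ$ via $\Phi_M$.

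For part (e), the isomorphism $A \cong \Ann_\tE \tP$ from (b) places $A$ as the first term of the filtration $\{\Ann_\tE \tP^n\}$ supplied by Theorem~\ref{main}(f), so after a further localization in $\tW$ the quotient $\tE/A$ is free filterable relative to $\tP$, hence \Rp{\tR}{\tP} by Proposition~\ref{reRpf}(g). Proposition~\ref{HdRpf} then yields the exact sequence
\[
0 \to H^d_{(\uy)}(A)_\tQ \to H^d_{(\uy)}(\tE)_\tQ \to H^d_{(\uy)}(\tE/A)_\tQ \to 0,
\]
proving the injectivity statement. Since $A_\tQ$ is regular of dimension $d$ with $\uy$ a regular system of parameters, $H^d_{(\uy)}(A)_\tQ = H^d_{\tQ A_\tQ}(A_\tQ) \cong \E_{A_\tQ}(\kappa_\tQ)$, whose $\kappa_\tQ$-socle is one-dimensional. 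A socle generator $s$ is killed by $\tQ A_\tQ$; since the $\tR_\tQ$-action on $A \hookrightarrow \tE$ factors through $A_\tQ = \tR_\tQ/\tP\tR_\tQ$, the image of $s$ in $H^d_{(\uy)}(\tE)_\tQ \cong \E_{\tR_\tQ}(\kappa_\tQ)$ is killed by $\tQ\tR_\tQ$ and thus lies in the one-dimensional socle there; it is nonzero by the injectivity just shown, so generates that socle. The principal obstacle throughout is organizational: several ``after localizing at one element of $\tW$'' steps must be combined into a single one that works uniformly in $\tQ$ and $\uy$, which is viable because each cited result (Theorem~\ref{main}, Corollary~\ref{HHom-exact}, Propositions~\ref{reRpf}(g) and~\ref{HdRpf}) already carries this uniformity in its statement.
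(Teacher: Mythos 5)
Your proof is correct and follows essentially the same route as the paper's: parts (a) and (b) are by the same citations, (c) realizes $\E_{\tR_\tQ}(\kappa_\tQ)$ as iterated local cohomology $H^d_{(\uy)}\big(H^{\tth}_\tP(\tom)\big)_\tQ$, (d) uses a finite free presentation together with Corollary~\ref{HHom-exact} and the identifications $H^d_{(\uy)}(\hom(G_i,\tE)) \cong \hom(G_i, H^d_{(\uy)}(\tE))$ for the free terms (this is the paper's five-lemma diagram in slightly compressed form), and (e) carries the injection $A \cong \Ann_\tE\tP \hookrightarrow \tE$ through $H^d_{(\uy)}$ using pseudoflatness. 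The only surface differences are: in (c) you invoke the Grothendieck composite-functor spectral sequence where the paper tacitly uses the \v{C}ech double complex for $(\ux,\uy)$ on $\tom_\tQ$ (the degeneration is the same, since $\ux$ is $\tom$-regular and $H^j_{(\ux)}(\tom)=0$ for $j \ne \tth$); and in (e) you verify that $\tE/A$ is pseudoflat directly from the filtration $\{\Ann_\tE\tP^n\}$ of Theorem~\ref{main}(f), whereas the paper reaches the same injectivity by applying Corollary~\ref{HHom-exact} to the complex $\tR \to \tR/\tP \to 0$. Both are harmless repackagings of the same facts, and your remarks about combining the finitely many localizations at elements of $\tW$ into a single one are exactly the bookkeeping the paper also performs.
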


\begin{proof} Part~(a) is immediate from the hypotheses and Proposition~\ref{e12}.  
Part~(b) follows from part~(b) of Theorem~\ref{main} applied over $\tR$. 
 
(c) Let $\tQ$ and $\uy = \vect y d$ satisfy \eqref{Ecm-sharp}, so that $\dim(R_Q) = \tth + d$.
Note that $\tom_Q$ is assumed to be a canonical module for $\tR_\tQ$ by part~(b) above,  and the maximal
ideal of $\tR_\tQ$ is the radical of $\tP+(\uy)$. 
Let $\ux = \vect x \tth$ be a sequence of $\tth$ elements of $\tR$  
whose images form a \sop for $\tR_\tP$. Localize at one element of $\tW$ so that the radical of $(\ux)$ is $\tP$ and, hence, the radical of $\big((\ux) + (\uy)\big)R_Q$ is $QR_Q$. 
Then  $\E_\tR(\tR/\tQ) \cong \E_{\tR_\tQ}(\tR_\tQ/\tQ\tR_\tQ)$ can be realized as follows:
\begin{align*} 
\E_{\tR_\tQ}(\tR_\tQ/\tQ \tR_\tQ) \cong H^{\dim(R_Q)}_{Q}(\tom_\tQ) &\cong
 H^{\tth+d}_{(\ux,\, \uy)}(\tom_\tQ) \cong H^d_{(\uy)}\big(H^\tth_{(\ux)}(\tom_\tQ)\big)\\
&\cong H^d_{(\uy)}\big(H^\tth_\tP(\tom)_\tQ\big) = H^d_{(\uy)}(\tE_\tQ) \cong  H^d_{(\uy)}(\tE)_\tQ.
\end{align*} 

(d) Take a presentation $G_1 \to G_0 \to M \to 0$ where $G_0$ and $G_1$ are finitely generated free $\tR$-modules. 
By Corollary~\ref{HHom-exact} applied over $\tR$, we may localize at one element of $\tW$ such that exactness is preserved at $G_0$ and $M$ when
we apply $H^d_{(\uy)}\big(\hom(\blank, \tE)\big)_\tQ$ for all $\tQ$ and $\uy$ satisfying \eqref{Ecm-sharp}.
 
Note that $H^d_{(\uy)}(\blank)_\tQ$ is isomorphic with $H^d_{(\uy)}(\tR)_\tQ\otimes_\tR \blank$. With this
identification, the natural map $\Phi_M$ is induced by the bilinear map  
$$H^d_{(\uy)}(\tR)_\tQ \times \Hom_\tR(M,\tE) \xra{\qquad} \Hom_\tR\big(M, H^d_{(\uy)}(\tR)_\tQ \otimes_{\tR}\tE\big)$$
whose value on $(h, f)$  is the map $u \mapsto h \otimes f(u)$.  
It is straightforward to check that $\Phi_R$ is an isomorphism, and that $\Phi_{M \oplus M'}$
may be identified with $\Phi_M \oplus \Phi_{M'}$, so that $\Phi_G$ is an isomorphism for any finitely
generated free $\tR$-module $G$. 
To verify that $\Phi_M$ is an isomorphism, we make use of the presentation $G_1 \to G_0 \to M \to 0$ of $M$ over $\tR$.  
For all $Q$ and $\uy$ satisfying \eqref{Ecm-sharp},  we have the commutative diagram below
in which the rows are exact: 
$$\xymatrix@C=20pt{    
0 \ar[r]  &H^d_{(\uy)}\big(\hom_R(M, \tE)\big)_\tQ  \ar[r]\ar[d]_{\Phi_M}  &H^d_{(\uy)}\big(\hom_\tR(G_0, \tE)\big)_\tQ \ar[r]\ar[d]_{\Phi_{G_0}}^{\cong} & H^d_{(\uy)}\big(\hom_\tR(G_1, \tE)\big)_\tQ \ar[d]_{\Phi_{G_1}}^{\cong} \\
0  \ar[r] & \hom_\tR\big(M, H^d_{(\uy)}(\tE)_\tQ\big) \ar[r] &  \hom_\tR\big(G_0, \,H^d_{(\uy)}(\tE)_\tQ\big) \ar[r] &
\hom_\tR\big(G_1, H^d_{(\uy)}(\tE)_\tQ\big)
}
$$
The top row is exact because of the paragraph right above the diagram, 
while the bottom row is exact thanks to the left exactness of 
$\hom_\tR\big(\blank,H^d_{(\uy)}(\tE)_\tQ\big)$. 
Since we have already shown that both $\Phi_{G_0}$ and $\Phi_{G_1}$ are isomorphisms, it follows from 
the five lemma that the map $\Phi_M$ is an isomorphism.

(e) Applying Corollary~\ref{HHom-exact} to the exact sequence $\tR \to \tR/\tP \to 0$ over $\tR$, we see that, 
after localization at one element of $\tW$ as needed, there is an injection 
$H^d_{(\uy)}(\Ann_\tE \tP)_\tQ \inj H^d_{(\uy)}(\tE)_\tQ$ for all $Q$ and $\uy$ 
satisfying \eqref{Ecm-sharp}, in which all the maps are natural. Hence, a socle generator 
for $H^d_{(\uy)}(\Ann_\tE \tP)_\tQ \cong H^d_{(\uy)}(A)_\tQ$ over $A_\tQ$ maps to a socle generator 
for $H^d_{(\uy)}(\tE)_\tQ \cong \E_{\tR_\tQ}(\tR_\tQ/\tQ \tR_\tQ)$ over $\tR_\tQ$. 
\end{proof} 

\subsection{Generic behavior of injective hulls for images of \CM rings}\label{genquo} 
In this subsection, the focus is the quotient ring $R :=\tR/\fa$, with $P:= \tP/\fa$, where $\fa \inc \tP$. 
We shall show how to obtain injective hulls for $R/Q$ over $R$ for all prime ideals $Q: = \tQ/\fa$ in 
 open neighborhood of $P$, where $\fa \inc \tQ$. 

\begin{remark}\label{homcomm} Let $\tR$ be a Noetherian ring and $N$ an $\tR$-module.  
Throughout the next result, we freely use the natural identification 
$\Hom_\tR(M, N_\tQ) \cong \Hom_\tR(M,N)_\tQ$ when $M$ is finitely generated
(and so finitely presented).  Note that $N$ need not be finitely generated.  We likewise use the identification 
$\Hom_\tR(\tR/\fa, N) \cong \Ann_N \fa$,  so that we have $\Ann_{N_\tQ} \fa \cong (\Ann_N \fa)_\tQ$.  
\end{remark}

\begin{theorem}\label{hulls} Let $\tR$, $\tP$, $\tth$, $A:=\tR/\tP$, $\tW : = \tR \sm \tP$, $\tom$, and $\tE$ be as above.  
Assume that $\tR$ is excellent or that conditions \ref{E1} and \ref{E2} above hold. 
Let $\fa \inc \tP$ be an ideal of $\tR$. Denote $R = \tR/\fa$, $P = \tP/\fa$ and $E = \Ann_\tE \fa$. 
For any prime $\tQ \supseteq \tP$, denote $Q = \tQ/\fa \inc R$.
Then we can localize at one element $\tg \in \tW$ such that, after replacing
$\tR$, $\tP$, $\tom$, $R$  and $\tE$ by their localizations, the following statements hold:
\bena 
\item The ring $A$ is regular, the ring $\tR$ is \CM, and the module $\tom$ is a global canonical module for $\tR$.
\item $A \cong \Ann_\tE \tP = \Ann_E \tP = \Ann_EP$, so that we have an injection $A \inj  E \inc \tE$. 
Write $\Ann_EP = Au$ for some $u \in E$. \label{hulls-Au}
\een

In all of the remaining parts, assume that we have localized at an element of $\tW$ so that conditions
{\rm(a)} and {\rm(b)} hold. Note that localization of an $R$-module at an element $\tg \in \tR \sm \tP$ is
the same as localization of that $R$-module at $g \in R \sm P$, where $g$ is the natural image of $\tg$.
In particular, the localization of an $R$-module at a prime $\tQ \supseteq \tP$ is the same as its localization
at $Q := \tQ/\fa$. 
Moreover, in the remaining parts, we place the following condition \eqref{hulls-pound} on $\tQ$ and $\uy= \vect yd$:
\[\tag{\#}\label{hulls-pound}
\begin{split}
\qquad &\text{The ideal $\tQ \in \Spec(\tR)$  is in $V(\tP)$ and the sequence $\uy$  in $\tQ$ maps}\\
&\text{to a regular \sop  for  $A_\tQ \cong A_Q$.}
\end{split}
\]
Then we have the following:
\benr
\item After localizing at one element of $\tW$, for all $\tQ$ and $\uy$ satisfying \eqref{hulls-pound}, 
the inclusion $E = \Ann_\tE \fa \inj \tE$ induces the following injective homomorphism 
$$
H^d_{(\uy)}(E)_Q  \cong H^d_{(\uy)}(E)_\tQ = H^d_{(\uy)}(\Ann_\tE \fa)_\tQ \cong  
\Ann_{H^d_{(\uy)}(\tE)_\tQ}\fa \inj H^d_{(\uy)}(\tE)_\tQ. 
$$ 
Therefore, $H^d_{(\uy)}(E)_Q$ is an injective hull for $R_Q/QR_Q$ over $R_Q$, i.e., 
$H^d_{(\uy)}(E)_Q  \cong \E_{R_Q}\big((R/Q)_Q\big)$.
\item After localizing at one element of $\tW$, for all $\tQ$ and $\uy$ satisfying \eqref{hulls-pound}, 
the natural map 
$$
H^d_{(\uy)}(\ann_E P)_Q \inj H^d_{(\uy)}(E)_Q 
$$
is an injection, under which a socle generator for $H^d_{(\uy)}(\ann_E P)_Q \cong H^d_{(\uy)}(A)_Q$ 
over the regular local ring $A_Q$ maps to a socle generator for 
$H^d_{(\uy)}(E)_Q \cong  \Ann_{H^d_{(\uy)}(\tE)_\tQ}\fa \cong \E_{R_Q}\big((R/Q)_Q\big)$ over $R_Q$. 
In particular, with $u \in E = \Ann_\tE \fa$ as in part~\ref{hulls-Au} above, the element $[u;\uy] \in H^d_{(\uy)}(E)_Q$ 
is a socle generator. 
\een
\end{theorem}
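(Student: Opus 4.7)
The plan is to deduce everything from Theorem~\ref{Ecm}, applied to $\tR$, together with the identification $E = \Ann_\tE \fa$ and the standard fact that the $\fa$-torsion submodule of an injective hull of the residue field $\kappa_\tQ$ over $\tR_\tQ$ is an injective hull of $\kappa_\tQ \cong \kappa_Q$ over the quotient ring $R_Q = \tR_\tQ/\fa\tR_\tQ$.

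Parts~(a) and (b) are essentially immediate: (a) is Theorem~\ref{Ecm}(a), whose hypotheses hold either by Proposition~\ref{e12} (if $\tR$ is excellent) or by direct assumption of \ref{E1} and \ref{E2}. For (b), Theorem~\ref{Ecm}(b) gives $\Ann_\tE \tP \cong A$, and since $\fa \inc \tP$ we have $\Ann_\tE \tP \inc \Ann_\tE \fa = E$, so $\Ann_E \tP = \Ann_\tE \tP$; as $P = \tP/\fa$, this equals $\Ann_E P$, and any preimage $u \in E$ of $1 \in A$ serves as the required generator. For (c), first localize as in Theorem~\ref{Ecm}(c) so that $H^d_{(\uy)}(\tE)_\tQ \cong \E_{\tR_\tQ}(\tR_\tQ/\tQ\tR_\tQ)$ uniformly in $\tQ$ and $\uy$ satisfying \eqref{hulls-pound}. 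Corollary~\ref{HHom-exact} applied to the short exact sequence $0 \to \fa \to \tR \to R \to 0$ then shows, after a further generic localization, that $H^d_{(\uy)}(\Ann_\tE \fa)_\tQ \inj H^d_{(\uy)}(\tE)_\tQ$ is injective with image $\Ann_{H^d_{(\uy)}(\tE)_\tQ}\fa$ (using the identification $\Hom_\tR(\tR/\fa, N) \cong \Ann_N \fa$ from Remark~\ref{homcomm}); by the standard fact above, this equals $\E_{R_Q}(R_Q/QR_Q)$.

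For (d), let $\tQ, \uy$ satisfy \eqref{hulls-pound}. Theorem~\ref{Ecm}(e) provides an injection $H^d_{(\uy)}(\Ann_\tE \tP)_\tQ \inj H^d_{(\uy)}(\tE)_\tQ$ sending a socle generator of $H^d_{(\uy)}(A)_\tQ$ over $A_\tQ$ to a socle generator of $H^d_{(\uy)}(\tE)_\tQ$ over $\tR_\tQ$. By (b), $\Ann_\tE \tP = \Ann_E P$, so this injection factors as
\[
H^d_{(\uy)}(\Ann_E P)_Q \lra H^d_{(\uy)}(E)_Q \inj H^d_{(\uy)}(\tE)_\tQ,
\]
with the second map the inclusion from (c); injectivity of the composite forces injectivity of the first arrow. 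The socle of $H^d_{(\uy)}(E)_Q \cong \E_{R_Q}(R_Q/QR_Q)$ over $R_Q$ is the one-dimensional $\kappa_Q$-subspace $\Ann_{H^d_{(\uy)}(E)_Q} Q$, and this coincides with $\Ann_{H^d_{(\uy)}(\tE)_\tQ} \tQ$, the socle of $H^d_{(\uy)}(\tE)_\tQ$ over $\tR_\tQ$ (any element killed by $\tQ$ is automatically killed by $\fa$). Hence the two notions of ``socle generator'' agree on the overlap, and the displayed factorization carries a socle generator of $H^d_{(\uy)}(A)_Q$ to a socle generator of $H^d_{(\uy)}(E)_Q$. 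Finally, under the isomorphism $A \xra{\cong} \Ann_E P$ sending $1 \mapsto u$, functoriality of $H^d_{(\uy)}(\blank)$ carries $[1;\uy] \in H^d_{(\uy)}(A)_Q$ to $[u;\uy] \in H^d_{(\uy)}(E)_Q$; and since $A_Q$ is a regular local ring of dimension $d$ with regular \sop\ $\uy$, the class $[1;\uy]$ is the standard socle generator of $H^d_{(\uy)}(A)_Q$. Therefore $[u;\uy]$ is a socle generator of $H^d_{(\uy)}(E)_Q$, as claimed.

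The main obstacle I anticipate is uniformity: each invocation of Theorem~\ref{main}, Theorem~\ref{Ecm}, or Corollary~\ref{HHom-exact} allows one additional generic localization, and the conclusion must hold for \emph{all} pairs $(\tQ,\uy)$ satisfying \eqref{hulls-pound} after a \emph{single} final localization. Parts~(c)--(e) of Theorem~\ref{Ecm} are already formulated uniformly, so the chaining is legitimate; the delicate point is verifying that the factorization through $H^d_{(\uy)}(E)_Q$ is compatible with the socle identifications on both sides simultaneously, which in the end reduces to naturality of the maps together with the standard fact that $\fa$-torsion in an injective hull of a residue field is an injective hull of that residue field over the quotient.
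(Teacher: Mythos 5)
Your handling of parts (a), (b), and (d) is sound, but part (c) has a genuine gap. The appeal to Corollary~\ref{HHom-exact} applied to the short exact sequence $0 \to \fa \to \tR \to R \to 0$ does give, after a generic localization, that $H^d_{(\uy)}(E)_\tQ \to H^d_{(\uy)}(\tE)_\tQ$ is injective. But it does \emph{not} identify the image of this map with $\Ann_{H^d_{(\uy)}(\tE)_\tQ}\fa$. The identification $\Hom_\tR(\tR/\fa, N) \cong \Ann_N \fa$ from Remark~\ref{homcomm} is applied here with $N = \tE$ to get $E = \Ann_\tE\fa$; applying it instead with $N = H^d_{(\uy)}(\tE)$ gives $\Hom_\tR(\tR/\fa, H^d_{(\uy)}(\tE))_\tQ \cong \Ann_{H^d_{(\uy)}(\tE)_\tQ}\fa$. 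What is missing is the natural isomorphism linking $H^d_{(\uy)}\bigl(\Hom_\tR(\tR/\fa, \tE)\bigr)_\tQ$ to $\Hom_\tR\bigl(\tR/\fa, H^d_{(\uy)}(\tE)\bigr)_\tQ$, i.e., you need to know that the natural map $\Phi_{\tR/\fa}$ from Theorem~\ref{Ecm}(d) is an isomorphism (or at least surjective). The image you obtain from Corollary~\ref{HHom-exact} is a priori only \emph{contained} in the $\fa$-torsion of $H^d_{(\uy)}(\tE)_\tQ$, and since it is a homomorphic image of a local cohomology module rather than a visibly injective $R_Q$-module, one cannot conclude from essentiality that the containment is an equality. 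The paper closes exactly this gap by quoting Theorem~\ref{Ecm}(d) with $M := \tR/\fa$ together with Remark~\ref{homcomm}; once you replace the invocation of Corollary~\ref{HHom-exact} with Theorem~\ref{Ecm}(d), part (c) is immediate.

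Your argument for part (d) takes a slightly different route from the paper's: you invoke Theorem~\ref{Ecm}(e) to get the injection $H^d_{(\uy)}(\Ann_\tE\tP)_\tQ \inj H^d_{(\uy)}(\tE)_\tQ$ carrying a socle generator to a socle generator, factor it through $H^d_{(\uy)}(E)_Q$ using $\Ann_\tE\tP = \Ann_EP$ from part (b), and then observe that the $R_Q$-socle of $\Ann_{H^d_{(\uy)}(\tE)_\tQ}\fa$ coincides with the $\tR_\tQ$-socle of $H^d_{(\uy)}(\tE)_\tQ$. The paper instead applies Corollary~\ref{HHom-exact} directly to $\tR/\fa \to \tR/\tP \to 0$ to get a natural injection $H^d_{(\uy)}(\Ann_\tE\tP)_\tQ \inj H^d_{(\uy)}(E)_\tQ$, avoiding the detour through $H^d_{(\uy)}(\tE)_\tQ$. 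Both are valid; your version relies on the (corrected) part (c) for the socle identification, while the paper's version is self-contained within $E$.
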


\begin{proof} Parts~(a) and (b) hold for the same reason as in Theorem~\ref{Ecm}.

(c) The induced isomorphism $H^d_{(\uy)}(E)_\tQ = H^d_{(\uy)}(\Ann_\tE \fa)_\tQ 
\cong  \Ann_{H^d_{(\uy)}(\tE)_\tQ}\fa$ follows from part~(d) of Theorem~\ref{Ecm} 
with $M:= \tR/\fa$ and Remark~\ref{homcomm}.  As we have noted earlier, for $\tR$-modules 
killed by $\fa$ it does not matter whether we think of them as $\tR$-modules 
and localize at $\tQ$ or think of  them as $R$-modules and localize at $Q$.   Hence, 
$H^d_{(\uy)}(\Ann_\tE \fa) )_Q \cong  \Ann_{H^d_{(\uy)}(\tE)_\tQ}\fa$. Moreover, we 
have $H^d_{(\uy)}(\tE)_\tQ \cong \E_\tR(\tR/\tQ)$ by part~(c) of Theorem~\ref{Ecm}. 
Therefore,
\[H^d_{(\uy)}(E)_Q = H^d_{(\uy)}(\Ann_\tE \fa) )_Q \cong 
\Ann_{\E_\tR(\tR/\tQ)}\fa \cong \E_R(R/Q).\] 

(d) Applying Corollary~\ref{HHom-exact} to the exact sequence $\tR/\fa \to \tR/\tP \to 0$ over $\tR$, 
we see that, after localization at one element of $\tW$ as needed, there is 
a natural injection $H^d_{(\uy)}(\Ann_\tE \tP)_\tQ \inj H^d_{(\uy)}(E)_\tQ$ for all $\tQ$ and $\uy$ 
satisfying \eqref{hulls-pound}. In terms of $A \cong Au = \Ann_E P \subseteq E$ and in terms of modules 
over $R_Q$, we have a natural injection $H^d_{(\uy)}(A)_Q \cong 
H^d_{(\uy)}(Au)_Q  = H^d_{(\uy)}(\Ann_E P)_Q \inj H^d_{(\uy)}(E)_Q$.
Hence, a socle generator for $H^d_{(\uy)}(Au)_Q \cong H^d_{(\uy)}(A)_Q$ over $A_Q$ maps to 
a socle generator for $H^d_{(\uy)}(E)_Q \cong \E_{R_Q}\big((R/Q)_Q\big)$ over $R_Q$. 
Corresponding to the socle generator $[1;\uy] \in H^d_{(\uy)}(A)_Q$, the element 
$[u;\uy] \in H^d_{(\uy)}(Au)_Q$ is a socle generator for  $H^d_{(\uy)}(Au)_Q$. 
Therefore, the element $[u;\uy] \in H^d_{(\uy)}(E)_Q$ is a socle generator 
for $H^d_{(\uy)}(E)_Q \cong \E_{R}(R/Q)$ over $R_Q$.
\end{proof}

\subsection{Generic behavior of injective hulls for \stwo quotients of \CM rings}\label{s2}
In this subsection, we  keep all of the conventions of Notation~\ref{notat}. In addition, 
we assume that $R_P$ is \stwo, where $R= \tR/\fa$ and $P =\tP/\fa$. 
We show that after localizing at one element of $\tW$,  we can obtain injective hulls of residue
class fields of $R$ at primes $Q := \tQ/\fa \in V(\tP/\fa)$ from the top nonvanishing local
cohomology module of a finitely generated  $R$-module.  
We need some preliminaries.

\begin{discussion}\label{S2canon-g} 
Let $\tR$ be a (not necessarily excellent or local) \CM ring with global 
canonical module $\tom$, let $\fa$ be an ideal of $\tR$ such that all minimal 
primes of $\fa$ have the same height, say $k$. Assume that $R: = \tR/\fa$ is \stwo. 
Let $\om := \Ext^k_{\tR}(R, \tom)$. When $R$ is Cohen-Macaulay, this is a canonical module for 
$R$.  However, when $R$ is \stwo and when all minimal  primes of $\fa$ have the same height 
$k$, this module has some of the same behavior as in the \CM case 
and, in some literature, is still referred to as a canonical module for $R$. 
In particular, without assuming that $R$ is \CM, we have the following 
(some of what we state below is in the literature in one form or another, 
cf.~\cite[Prop.~2]{Ao80}, \cite[Satz 6.12]{HK71} or \cite[Thm.~1.2]{Ao83} for example,
but we follow the statement with a brief, self-contained treatment):
\ben
\item For every finitely generated $R$-module $M$, we have 
$\ext^k_{\tR}(M, \tom) \cong \Hom_R(M, \om)$. 
Moreover, $\ext^k_{\tR}(\om, \tom) \cong \Hom_R(\om, \om) \cong R$, 
in which $R \cong \Hom_R(\om, \om)$ can be induced by the homothety map. 
\item $H^{\dim(R_Q)}_{Q}(\om_Q) \cong \E_R(R/Q) \cong \E_{R_Q}\big((R/Q)_Q\big)$ 
for all prime $Q$ of $R$. 
\item For all finitely generated $R$-module $M$ and for all prime $Q$ of $R$, we 
have $H^{\dim(R_Q)}_{Q}(M_Q) \cong 
\Hom_{R_Q}\Big(\Hom_R(M, \om)_Q,\,\E_{R_Q}\big((R/Q)_Q\big)\Big)$. 
In particular, we have $H^{\dim(R_Q)}_{Q}(R_Q) \cong 
\Hom_{R_Q}\Big(\om_Q, \, \E_{R_Q}\big((R/Q)_Q\big)\Big)$ 
for all prime $Q$ of $R$. 
\een

To prove (1), choose a maximal regular sequence $\uz = \vect z k$ in $\fa$. 
As $\uz \in \fa$ and $\uz$ is regular on $\tom$, we have 
$\om = \Ext^k_{\tR}(R, \tom) \cong \Hom_\tR\big(R, \tom/(\vect z k)\tom\big)$. 
But then, for any finitely generated $R$-module $M$, we get 
\begin{multline*} 
\Ext^{k}_\tR(M, \tom) \cong \Hom_\tR\big(M, \tom/(\uz)\tom)\big) 
\cong \Hom_\tR\big(M, \Hom_\tR(R, \tom/(\uz)\tom)\big) \\ 
\cong \Hom_\tR(M, \om) = \Hom_R(M, \om). 
\end{multline*}
In particular, $\Ext^{k}_\tR(\om, \tom) \cong \Hom_R(\om, \om)$. 
To show that the map $\theta \colon R \to \Hom_R(\om, \om)$ induced by homothety 
is an isomorphism, we need to prove that the kernel and cokernel are $0$.  If we localize 
at any prime $\fp: = \tfp/\fa \in \spec(R)$ of height $1$  or $0$, we see that $R_\fp$ is \CM and, 
hence, $\om_{\fp} = \Ext^{\dim(\tR_\tfp) - \dim(R_\fp)}_{\tR_\tfp}(R_\fp, \tom_\tfp)$ is a true canonical module 
of $R_\fp$, because $R$ is equidimensional. 
This shows that $\theta$ becomes an isomorphism whenever we localize at such a 
prime $\fp$.   Thus, the kernel and cokernel of the map $\theta$ are supported only at primes 
of height $2$ or greater.  Note that both $R$ and $\Hom_R(\om, \om)$ are \stwo. 
Now we can see that both $\Ker(\theta)$ and $\Coker(\theta)$ must be zero. 
If the kernel is not $0$, then localize at a minimal prime of its support, which must be in 
$\ass_R\big(\Ker(\theta)\big) \subseteq \ass_R(R)$, to get a contradiction. 
Hence $\theta$ is an injective map. Suppose that the cokernel is not $0$. 
After the localization at a minimal prime its support, the cokernal becomes depth $0$ while both 
$R$ and $\Hom_R(\om, \om)$ have depth at least $2$, which is a contradiction. 

Now that we have (1), we can prove (2) as follows. Let $Q = \tQ/\fa$ be any prime ideal of 
$R = \tR/\fa$, where $\fa \subseteq \tQ \in \spec(\tR)$. 
Then $k = \dim(\tR_\tQ) - \dim(R_Q)$. Therefore, 
\begin{align*}
H^{\dim(R_Q)}_{QR_Q}(\om_Q) 
&\cong \Hom_{\tR_\tQ}\big(\ext^k_{\tR}(\om, \tom)_\tQ,\, \E_{\tR_\tQ}(\tR_\tQ/\tQ_\tQ)\big) &&
 \text{(duality over $\tR_\tQ$)}\\
& \cong \Hom_{\tR_\tQ}\big((\tR/\fa)_\tQ,\, \E_{\tR_\tQ}(\tR_\tQ/\tQ_\tQ)\big) &&
\text{(part (1) here)}\\ & \cong \E_{R_Q}\big((R/Q)_Q\big) \cong \E_R(R/Q) .
\end{align*}

To see (3), let $M$ be an finitely generated $R$-module, and let $Q = \tQ/\fa$ be as above. Then, by local duality over $\tR_\tQ$, we get
\begin{align*}
H^{\dim(R_Q)}_{QR_Q}(M_Q) 
&\cong \Hom_{\tR_\tQ}\big(\ext^k_{\tR}(M, \tom)_\tQ,\, \E_{\tR_\tQ}(\tR_\tQ/\tQ_\tQ)\big) &&\text{(local duality)}\\
& \cong \Hom_{\tR_\tQ}\big(\Hom_R(M, \om)_\tQ,\, \E_{\tR_\tQ}(\tR_\tQ/\tQ_\tQ)\big) &&\text{(part~(1))}\\
& \cong \Hom_{R_Q}\big(\Hom_R(M, \om)_\tQ,\, \E_{R_Q}(R_Q/Q_Q)\big). 
\end{align*}
In particular, we obtain $H^{\dim(R_Q)}_{Q}(R_Q) \cong \Hom_{R_Q}\Big(\om_Q, \, \E_{R_Q}\big((R/Q)_Q\big)\Big)$. 
\end{discussion}

\begin{theorem}\label{main-g}
Let $\tR$ be a (not necessarily excellent or local) ring, let $\tP \in \spec(\tR)$ and let $\tom$ be 
a finitely generated $\tR$-module.  Assume that $\tR_\tP$ is \CM and that $\tom_\tP$ is a canonical 
module for $\tR_\tP$.  Fix an ideal $\fa \subseteq \tP$ and denote $R:= \tR/\fa$, $\tP/\fa =: P \in \spec(R)$. 
Assume that $R_P$ is \stwo. Let $\tth: = \dim(\tR_\tP)$, $h: = \dim(R_P)$, and $k:=\tth - h$. 
Set $\om := \Ext^k_{\tR}(R, \, \tom)$, $E:= H^{h}_{P}(\om)$ and $\tE := H^{\tth}_{\tP}(\tom)$. 
Also, let $\tW \inc \tR$ be a subset that maps onto $A \sm \{0_A\}$ under the natural map $\tR \to \tR/\tP$. 
Similarly, let $W \inc R$ be a subset that maps onto $A \sm \{0_A\}$ under the natural map $R \to R/P$. 
Then the following holds: 
\bena
\item After replacing $\tR$ and $R$ with $\tR_\tg$ and $R_\tg$ for some $\tg \in \tR \sm \tP$, 
we have $\ext^k_{\tR}(\om, \tom) \cong \Hom_R(\om, \om) \cong R$, 
in which the isomorphism $R \cong \Hom_R(\om, \om)$ can be induced by the homothety map. 
\item 
\twg, we have $H^{h}_{P}(\om) \cong \hom_\tR\big(\tR/\fa,\,H^{\tth}_{\tP}(\tom)\big) 
\cong \ann_{H^{\tth}_{\tP}(\tom)}\fa$, i.e., $E \cong \hom_\tR\big(\tR/\fa,\,\tE\big)$. 
Hence, \wg, we have $\ann_{H^{h}_{P}(\om)}P \cong A$. 
\item If $0 \to M' \to M \to M'' \to 0$ is a sequence of 
finitely generated $R$-modules that becomes exact after localization at $P$, then the 
induced sequence
$$0 \xra{\quad} \Hom_R(M'',\, E) \xra{\quad} \Hom_R(M,\, E) \xra{\quad}\Hom_R(M',\, E) \xra{\quad} 0$$ 
is \wg exact.  Hence, for every finitely generated $R$-module $M$ and for every $i \geq 1$, 
the module $\Ext^i_R\big(M, H^h_P(\om)\big)$ is \wg $0$. 
Therefore, the functor $\hom_R(\blank, \, E)$ is \wg exact on 
any given finite set of short exact sequences of finitely generated $R$-modules, and, hence, on
any given finite set of finite long exact sequences of finitely generated $R$-modules. The choice of $g$
depends on which finite set of finite exact sequences one chooses. 
\item Let $N: = \Hom_R(M, E)$ or $N := H^i_P(M)$, where $M$ is a finitely generated $R$-module.  Then  after 
localizing at one  element of $W$ \emph{that is independent of $n \in \N$}, the filtration of $N$ by the modules 
$\Ann_N P^n$,  which ascends as $n$ increases with $\bigcup_{n \in \N}\Ann_N P^n = N$, has 
factors that are finitely generated free modules over $A$.  
\item For every finitely generated $R$-module $M$, \wg, we have 
$H^{h}_{P}(M) \cong \Hom_{R}\big(\hom_R(M,\om), \, H^{h}_{P}(\om)\big)$. In particular, 
\wg, we get $H^{h}_{P}(R) \cong \Hom_{R}\big(\om, \, H^{h}_{P}(\om)\big)$. 
\een
\end{theorem}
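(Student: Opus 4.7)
The strategy is to deduce each of parts (a)--(e) from Theorem~\ref{main} applied to $(\tR, \tP, \tom)$, using three ``dictionaries'' that transfer statements between $R$-modules and $\tR$-modules: (i) for any $R$-module $N$, the local cohomology modules $H^i_P(N)$ and $H^i_\tP(N)$ agree, and $\Ann_N P^n = \Ann_N \tP^n$, since $\fa$ kills $N$; (ii) restriction of scalars is left adjoint to $\Hom_\tR(R, \blank) = \Ann_{\blank}\fa$, so $\Hom_R(M, \Ann_\tE \fa) = \Hom_\tR(M, \tE)$ for every $R$-module $M$; (iii) by Discussion~\ref{S2canon-g}(1), applied locally at $P$ and globalized by finite generation, $\Ext^k_\tR(M, \tom) \cong \Hom_R(M, \om)$ for every finitely generated $R$-module $M$ after inverting a single element of $\tR \sm \tP$ depending on $M$.

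\textbf{Parts (a) and (b).} For (a), the homothety map $R \to \Hom_R(\om, \om)$ and the canonical map $\Ext^k_\tR(\om, \tom) \to \Hom_R(\om, \om)$ are maps between finitely generated $R$-modules; it suffices to verify they are isomorphisms at $P$. This is precisely the argument of Discussion~\ref{S2canon-g}(1), which uses only that $\tR_\tP$ is \CM with canonical module $\tom_\tP$ (hence catenary) and that $R_P$ is S$_2$ (hence equidimensional, and \CM at every prime of height at most $1$). Finite generation then yields one element $\tg \in \tR \sm \tP$ making both maps global isomorphisms. For (b), apply Theorem~\ref{main}(e) to $\tR$ with $M = \om$ and $i = h$: generically, $H^h_\tP(\om) \cong \Hom_\tR\bigl(\Ext^k_\tR(\om, \tom), \tE\bigr)$. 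By (a) the inner module is $R$, so the right-hand side is $\Hom_\tR(R, \tE) = \Ann_\tE \fa$; the left-hand side equals $H^h_P(\om) = E$ by dictionary~(i). The second assertion $\Ann_E P \cong A$ then follows from $\Ann_{\Ann_\tE \fa}(P) = \Ann_\tE \tP$ together with $\Ann_\tE \tP \cong A$ generically (Theorem~\ref{main}(b) applied to $\tR$).

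\textbf{Parts (c), (d), and (e).} For (c), dictionary (ii) gives $\Hom_R(\blank, E) = \Hom_\tR(\blank, \tE)$ on $R$-modules, so the exactness claim reduces directly to Theorem~\ref{main}(a) applied to $\tR$. For the Ext-vanishing, use the Grothendieck spectral sequence
\[ \Ext^p_R\bigl(M, \Ext^q_\tR(R, \tE)\bigr) \Longrightarrow \Ext^{p+q}_\tR(M, \tE); \]
once $\Ext^q_\tR(R, \tE) = 0$ for $q \geq 1$ generically (Theorem~\ref{main}(a) with $M = R$), it collapses to $\Ext^p_R(M, E) \cong \Ext^p_\tR(M, \tE)$, and the right side vanishes generically for $p \geq 1$. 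Part~(d): dictionary (ii) gives $\Hom_R(M, E) = \Hom_\tR(M, \tE)$; dictionary (i) gives $H^i_P(M) = H^i_\tP(M)$ with identical annihilator filtrations, so the $A$-free filtration conclusions follow at once from parts (d) and (f) of Theorem~\ref{main} applied to $\tR$. Part~(e): Theorem~\ref{main}(e) over $\tR$ gives, generically, $H^h_\tP(M) \cong \Hom_\tR\bigl(\Ext^k_\tR(M, \tom), \tE\bigr)$ for any finitely generated $R$-module $M$; rewriting the inner Ext via dictionary (iii), the outer Hom via dictionary (ii), and the left side via dictionary (i) yields $H^h_P(M) \cong \Hom_R\bigl(\Hom_R(M, \om), E\bigr)$.

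\textbf{Main obstacle.} The principal technical point is (a): Discussion~\ref{S2canon-g}(1) is phrased in a global setting (where $R$ is S$_2$ everywhere and $\tR$ is globally \CM), and one must verify that its argument localizes cleanly---that the hypothesis ``$R_P$ is S$_2$'' alone, together with $\tR_\tP$ being \CM, suffices to force the homothety and $\Ext$ identifications at $P$. This rests on $\tR_\tP$ being catenary (automatic from \CM), hence $R_P$ is catenary, hence by Remark~\ref{S2} equidimensional under S$_2$; this equidimensionality forces $k = \tth - h$ to be the correct Ext-index at every height-at-most-$1$ prime $\fp$ of $R_P$, making $\om_\fp$ a genuine canonical module there and allowing the depth-based injectivity/surjectivity argument for the homothety to run inside $R_P$ itself.
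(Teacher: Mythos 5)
Your proposal is correct and follows essentially the same route as the paper: apply Theorem~\ref{main} over $\tR$ and translate via the three identifications you call dictionaries (i)--(iii), with Discussion~\ref{S2canon-g}(1), localized at $\tP$ and globalized by finite generation, handling the key step (a). You also resolve the genuine technical point correctly: that $\tR_\tP$ Cohen--Macaulay (hence catenary) plus $R_P$ S$_2$ forces $R_P$ equidimensional, so that all minimal primes of $\fa_\tP$ have height $k$ and the hypotheses of Discussion~\ref{S2canon-g} hold after localizing at $\tP$.

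The one place you diverge is the Ext-vanishing in part (c): you invoke the change-of-rings Grothendieck spectral sequence $\Ext^p_R\bigl(M,\Ext^q_\tR(R,\tE)\bigr)\Rightarrow\Ext^{p+q}_\tR(M,\tE)$, whereas the paper dualizes a free cover $0\to L\to R^n\to M\to 0$ over $R$ and passes to syzygies of $M$. Both are valid; the syzygy argument is more elementary and stays entirely within the framework already established (the generic exactness of $\Hom_R(\blank,E)$ on short exact sequences), while the spectral sequence gives a slightly cleaner structural statement ($\Ext^p_R(M,E)\cong\Ext^p_\tR(M,\tE)$ generically). One small caution: to run the spectral sequence for a fixed degree $p$ you only need $\Ext^q_\tR(R,\tE)=0$ for $1\le q\le p$ --- a finite set of vanishings, each achievable by inverting one element via Theorem~\ref{main}(a); phrasing it as "$\Ext^q_\tR(R,\tE)=0$ for $q\ge 1$ generically" glosses over the fact that for infinitely many $q$ one cannot pick a single $g$, so one should be explicit that only finitely many $q$ are needed for each fixed $p$.
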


\begin{proof} 
The proof is an application of Theorem~\ref{main} to $\tR$, together with Discussion~\ref{S2canon-g}.
As in the proof of Theorem~\ref{main}, it suffices to assume $\tW = \tR \sm \tP$ and $W = R \sm P$. 
We may use $\tg$ to denote an element of $\tW$ and use $g \in W$ to denote the natural image of $\tg$ 
under the natural map $\tR \to R$. In the course of the proof we may repeatedly, but finitely many times,  
localize at one element  $\tg \in \tW$. Each time, we make a change of terminology, and continue to use $\tR$ 
and $R$ to denote the resulting ring  $\tR_\tg$ and $R_\tg$.  Likewise, we use $\tP$ and $P$ for their extensions 
to $\tR_\tg$ and $R_\tg = R_g$, and for every module under consideration we use the same letter for the module 
after base change from $\tR$ or $R$ to $\tR_\tg$ or $R_g$ (finitely generated modules under consideration are 
replaced by their localizations: this is the same as base change from $\tR$ or $R$ to $\tR_\tg$ or $R_g$).
For an $R$-module $M$, we naturally identify $M_\tg$ with $M_g$. 

(a) Naturally, we may write $R_P = \tR_\tP/\fa_\tP$ and $\om_P = \Ext^k_{\tR_\tP}(R_P, \, \tom_\tP)$. Since $\tR_\tP$ is 
Cohen-Macaulay with $\tom_\tP$ being a canonical module, and since $R_P$ is \stwo, we apply 
Discussion~\ref{S2canon-g}(1) to obtain $\ext^k_{\tR_\tP}(\om_\tP, \tom_\tP) \cong \Hom_{R_P}(\om_P, \om_P) \cong R_P$, 
in which $R_P \cong \Hom_{R_P}(\om_P, \om_P)$ can be induced by the homothety map.
Thus, after replacing $\tR$ with its localization at one element of $\tW$ and, accordingly, replacing $R$ with its 
localization at the corresponding element of $W$, we obtain $\ext^k_{\tR}(\om, \tom) \cong \Hom_R(\om, \om) \cong R$, 
in which $R \cong \Hom_R(\om, \om)$ can be induced by the homothety map. 
So, \twg, $\ext^k_{\tR}(\om, \tom) \cong \tR/\fa$. 

(b) By Theorem~\ref{main}(e) applied to $\tR$, we see that, \twg, 
\begin{align*}
H_P^h(\om) &\cong \hom_\tR\big(\ext^{\tth-h}_\tR(\om,\,\tom),\,H_\tP^\tth(\tom)\big) && \text{(by \ref{main}(e))}\\
&\cong \hom_\tR\big(\tR/\fa,\,H_\tP^\tth(\tom)\big) \cong \ann_{H^{\tth}_{\tP}(\tom)}\fa &&\text{(by part~(a))}.
\end{align*}
Hence, \twg, we have $\ann_{H^{h}_{P}(\om)}\tP = \ann_{H^{\tth}_{\tP}(\tom)}\tP \cong A$ according to part~(b) 
of Theorem~\ref{main}. Over $R$, we see that, \wg, $\ann_{H^{h}_{P}(\om)}P \cong A$.

(c) Let $0 \to M' \to M \to M'' \to 0$ be a sequence of finitely generated $R$-modules that becomes exact after 
localization at $P$. Considered over $\tR$, this is a sequence of finitely generated $\tR$-modules that becomes 
exact after localization at $\tP$. By part~(a) of Theorem~\ref{main}, the induced sequence
$$0 \to \Hom_\tR(M'',\, \tE) \to \Hom_\tR(M,\, \tE) \to\Hom_\tR(M',\, \tE) \to 0$$ 
is \twg exact, in which $\tE := H^{\tth}_{\tP}(\tom)$. Moreover, part~(b) says that 
$E = \hom_\tR\big(\tR/\fa,\,\tE\big)$. Thus, the induced sequence
$$0 \xra{\quad} \Hom_R(M'',\, E) \xra{\quad} \Hom_R(M,\, E) \xra{\quad}\Hom_R(M',\, E) \xra{\quad} 0$$ 
is \wg exact.  

Next, for any finitely generated $R$-module $M$, there exists a short exact sequence $0 \to L \to R^n \to M \to 0$ 
of $R$-modules for some $n \in \N$, which forces $L$ to be finitely generated over $R$ as well. Then, as 
proved above, the induced sequence
$$0 \xra{\quad} \Hom_R(M,\, E) \xra{\quad} \Hom_R(R^n,\, E) \xra{\quad}\Hom_R(L,\, E) \xra{\quad} 0$$ 
is \wg exact, which forces $\ext_R^1(M,\,E)$ to vanish \wg. As for $i > 1$, we observe that
$\Ext^i_R\big(M, H^h_P(\om)\big) \cong \Ext^1_R\big(\Omega_R^{i-1}(M),\, H^h_P(\om)\big)$ is \wg $0$ 
because $\Omega_R^{i-1}(M)$, an $(i-1)^{\text{st}}$ syzygy of $M$ over $R$, finitely generated over
$R$. Therefore, the functor $\hom_R(\blank, \, E)$ is \wg exact on 
any given finite set of short exact sequences of finitely generated $R$-modules, and, hence, on
any given finite set of finite long exact sequences of finitely generated $R$-modules. The choice of $g \in W$, 
at which we localize, depends on which finite set of finite exact sequences one chooses.

(d) Let $M$ be a finitely generated $R$-module. In light of part~(b) above, we may identify $\Hom_R(M, E)$ 
(respectively, $H^i_P(M)$) with $\Hom_\tR(M, H^{\tth}_{\tP}(\tom))$ (respectively, $H^i_\tP(M)$).  Now the claim 
follows from parts~(d) and (f) of Theorem~\ref{main}.

(e)  As $\tR_\tP$ is \CM and $R_P$ is \stwo, we apply 
Discussion~\ref{S2canon-g}(1), to $\tR_\tP$ and $R_P$, and obtain $\ext^k_{\tR_\tP}(M_\tP, \tom_\tP) 
\cong \hom_{R_P}(M_P,\om_P)$ over $\tR_\tP$. As all the modules are finitely generated, we see that 
$\ext^k_{\tR}(M, \tom) \cong \hom_R(M,\om)$ \twg. 
By Theorem~\ref{main}(e) applied to $\tR$, we see that, \twg, 
\begin{align*}
H^{h}_{P}(M) 
&\cong \Hom_{\tR}\big(\ext^k_{\tR}(M, \tom),\, H^{\tth}_{\tP}(\tom)\big) &&\text{(by \ref{main}(e))}\\
& \cong \Hom_{\tR}\big(\hom_R(M,\om),\, H^{\tth}_{\tP}(\tom)\big) &&\text{(by above)}\\
& \cong \Hom_{R}\big(\hom_R(M,\om),\, H_P^h(\om)\big) &&\text{(by part~(b))}.
\end{align*}
Over $R$, this says that $H^{h}_{P}(M) \cong \Hom_{R}\big(\hom_R(M,\om), \, H^{h}_{P}(\om)\big)$, \wg. 
Now the claim for $H^{h}_{P}(R)$ is clear, and the proof is complete. 
\end{proof} 

\begin{theorem}\label{oR}  Suppose that $\tR$ is excellent or that conditions~\ref{E1}, \ref{E2} and \ref{E3} all hold. 
Let $\tP$ be prime in $\tR$ with $\fa \inc \tP$. Assume also that $\tR_\tP$  
is \CM, and  that $\tR$ has a \fg module $\tom$ such that $\tom_\tP$ is
a canonical module for $\tR_\tP$.  Let $R := \tR/\fa$, let $P:= \tP/\fa$, 
and assume as well that $R_P$ is \stwo.  
Let $\tP$ have height $\tth$ in $\tR$ and let $P$ have height $h$ in $R$, which imply that 
$\fa_\tP$ has height $k:=\tth - h$ in $\tR_\tP$.
Let $\om := \Ext^k_{\tR}(R, \, \tom)$.
Then we can localize at one  element of $\tR\sm \tP$ in such a way
 that all of  the following  statements hold:
\bena 
\item The ring $A$ is regular, the ring $\tR$ is Cohen-Macaulay, and the module $\tom$ 
 is a global canonical module for $\tR$. 
\item  $R$ is \stwo, locally equidimensional, and all minimal primes of $\fa$ are contained in $\tP$ 
 and have height $k$. 
\item  $H^h_P(\om) \cong \hom_\tR\big(\tR/\fa, H_\tP^\tth(\tom)\big)$ and, hence 
$\ann_{H^h_P(\om)}P \cong \hom_\tR\big(\tR/\tP, H_\tP^\tth(\tom)\big)$. Moreover, $\ann_{H^h_P(\om)}P \cong A$. 
Write $\ann_{H^h_P(\om)}P = Au$, for some $u \in H^h_P(\om)$, so that the image of $u$ in 
$H^h_P(\om)_P$ is a socle generator for $H^h_P(\om)_P \cong \E_{R_P}(\kappa_P)$ over $R_P$. \label{oR-Au}
\item For all primes $\tQ \in V(\tP)$, and for all $\uy:=\vect y d \in \tQ$ that maps to a \sop for $A_\tQ$, 
 we have  $H^d_{(\uy)}\big(H^h_P(\om)\big)_Q \cong \E_{R_Q}(\kappa_Q)
  \cong H^{d+h}_{Q}(\om)_Q$ over $R_Q$.  
\item Further assume that the sequence $\uy:=\vect y d \in \tQ$ maps to a regular \sop for $A_\tQ$. 
Then, with $u$ as in part~\ref{oR-Au} above, 
the element $[u; \uy] \in H^d_{(\uy)}\big(H^h_P(\om)\big)_Q$  
(see Notation~\ref{lcnot}) is a socle generator for the module $H^d_{(\uy)}\big(H^h_P(\om)\big)_Q 
\cong \E_{R_Q}(\kappa_Q)$ over $R_Q$. 
\een
\end{theorem}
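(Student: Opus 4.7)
The plan is to prove parts (a)--(e) sequentially, each time localizing at one more element of $\tW = \tR \setminus \tP$ and updating notation accordingly. For part~(a), Proposition~\ref{e12} reduces the excellent case to conditions~\ref{E1} and \ref{E2}, so I can invert one element $g_1 g_2 \in \tW$ to simultaneously make $A$ regular, $\tR$ Cohen-Macaulay, and $\tom$ a global canonical module. For part~(b), condition~\ref{E3} makes $R = \tR/\fa$ globally S$_2$ after one further localization; since $\tR$ is now CM (hence catenary) the quotient $R$ is catenary, so Remark~\ref{S2} (via EGA IV 5.10.9) forces $R$ to be locally equidimensional. To arrange that all minimal primes of $\fa$ lie in $\tP$ and have height $k$, I would pick, for each of the finitely many minimal primes of $\fa$ not contained in $\tP$, an element of that prime that is not in $\tP$, and invert the product; the surviving minimal primes then all sit inside $\tP$, and their heights can be read off in $\tR_\tP$, where $\fa_\tP$ is unmixed of height $k = \tth - h$ by equidimensionality of the S$_2$ ring $R_P$ together with the CM local ring $\tR_\tP$.

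For part~(c), I would invoke Theorem~\ref{main-g}(b) directly: after localizing at one element of $\tW$, there are natural isomorphisms $H^h_P(\om) \cong \hom_\tR(\tR/\fa, H^\tth_\tP(\tom)) = \ann_\tE \fa$ and $\ann_{H^h_P(\om)} P \cong A$. Choosing any generator $u$ of this cyclic module gives the distinguished element of part~\ref{oR-Au}, and the identification above also furnishes the natural isomorphism $H^h_P(\om) \cong \ann_\tE \fa$, which matches the module called $E$ in Theorem~\ref{hulls}. This identification is the bridge connecting our setup to the apparatus of Theorem~\ref{hulls}.

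For part~(d), I would combine part~(c) with Theorem~\ref{hulls}(c): under the identification $H^h_P(\om) \cong \ann_\tE \fa$, the conclusion $H^d_{(\uy)}(\ann_\tE \fa)_Q \cong \E_{R_Q}((R/Q)_Q)$ transfers directly, and since $A$ is now regular, every sop for $A_\tQ$ is automatically a regular sequence, so condition $(\#)$ of Theorem~\ref{hulls} coincides with the hypothesis on $\uy$ here. For the companion identification $\E_{R_Q}(\kappa_Q) \cong H^{h+d}_Q(\om)_Q$, I would apply Discussion~\ref{S2canon-g}(2) to the S$_2$ ring $R$ with its module $\om = \Ext^k_\tR(R,\tom)$, after observing that $\dim(R_Q) = h+d$ via the dimension formula in the catenary equidimensional setting.

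Part~(e) is then a direct transfer through Theorem~\ref{hulls}(d): the element $u \in H^h_P(\om)$ with $Au = \ann_{H^h_P(\om)}P$ corresponds under the isomorphism of part~(c) to a generator of $\ann_E P$ in the notation of Theorem~\ref{hulls}, which that theorem identifies as producing the socle generator $[u;\uy]$ of $H^d_{(\uy)}(E)_Q \cong \E_{R_Q}(\kappa_Q)$; pulling back yields the claim. The main obstacle is essentially organizational rather than conceptual: ensuring that a single element $g \in \tW$ works \emph{uniformly} for all $\tQ \in V(\tP)$ and all admissible $\uy$. Fortunately, this uniformity is already built into Theorems~\ref{main}, \ref{main-g}, and \ref{hulls}; the actual work is bookkeeping of the finitely many short exact sequences needed for each invocation, and verifying that each step's isomorphism is natural and commutes with further localization at any prime $\tQ \supseteq \tP$.
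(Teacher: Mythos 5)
Your outline follows the paper's proof almost line for line: Proposition~\ref{e12} and Remark~\ref{S2} handle (a)--(b), Theorem~\ref{main-g}(b) plus Discussion~\ref{S2canon-g}(2) handle (c), Theorem~\ref{hulls}(c) and Discussion~\ref{S2canon-g}(2) handle (d), and Theorem~\ref{hulls}(d) handles (e), so there is no real divergence in the decomposition.

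There is one substantive slip in your treatment of part~(d). You reconcile the mismatch between the hypothesis of Theorem~\ref{oR}(d) (merely a \sop for $A_\tQ$) and condition~\eqref{hulls-pound} of Theorem~\ref{hulls} (a \emph{regular} \sop for $A_\tQ$) by asserting that once $A$ is regular, ``every sop for $A_\tQ$ is automatically a regular sequence, so condition~$(\#)$ coincides with the hypothesis on $\uy$ here.'' That conflates ``regular sequence'' with ``regular system of parameters'': in a regular local ring every \sop is certainly a regular sequence (CM-ness), but it is a regular system of parameters only if it minimally generates the maximal ideal (think of $y^2$ in $k[y]_{(y)}$). So the two conditions do \emph{not} coincide, and that justification does not close the gap. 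The actual reason the deduction is valid is that the proof of Theorem~\ref{hulls}(c) only invokes Theorem~\ref{Ecm}(c) and (d), which use the weaker condition~\eqref{Ecm-sharp} requiring merely a \sop; equivalently, you can bypass Theorem~\ref{hulls}(c) and cite Theorem~\ref{Ecm}(c)--(d) directly with $M = \tR/\fa$. The regular-\sop hypothesis is genuinely needed only in part~(e), where the claim that $[1;\uy]$ generates the socle of $H^d_{(\uy)}(A)_Q$ forces $(\uy)A_Q$ to be the maximal ideal, not merely $\fm$-primary. Your treatment of part~(e) uses the regular-\sop hypothesis as stated, so that part is fine.
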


\begin{proof} In light of Proposition~\ref{e12} and Remark~\ref{S2}, we may localize at one 
element of $\tW$ and assume that $A = \tR/\tP$ is regular, that $\tR$ is Cohen-Macaulay, 
that $\tom$ is a global canonical module for $\tR$,  
and that $R:= \tR/\fa$ is \stwo and, hence, locally equidimensional.  
Moreover, we can localize at one element of $\tW$ so that all minimal 
primes of $\fa$ in $\tR$ are contained in 
$\tP$, and so we may assume that all minimal primes of $\fa$ have the 
same height, which is necessarily $k$, which will be the 
height of $\fa$ in $\tR$. This verifies parts (a) and (b).

The isomorphisms in part~(c) concerning $H^h_P(\om)$ and $\ann_{H^h_P(\om)}P$ are verified in 
part~(b) of Theorem~\ref{main-g}. Moreover, the isomorphism $H^h_P(\om)_P \cong \E_{R_P}(\kappa_P)$ is shown in  Discussion~\ref{S2canon-g}(2). The rest of part~(c), concerning $u$, is clear.

In part~(d), note that part~(c) says that $H^h_P(\om) \cong \ann_\tE \fa$, where $\tE: = H_\tP^\tth(\tom)$.
Hence, the isomorphism $H^d_{(\uy)}\big(H^h_P(\om)\big)_Q \cong \E_{R_Q}(\kappa_Q)$ follows immediately from part~(c) of Theorem~\ref{hulls}.
On the other hand, the isomorphism $H^{\dim(R_Q)}_{Q}(\om_Q) \cong \E_{R_Q}(\kappa_Q)$ has been verified in Discussion~\ref{S2canon-g}(2), with $\dim(R_Q) = d+h$. 

Finally, part~(e) follows from part~(d) of Theorem~\ref{hulls}, in light of the isomorphism $H^h_P(\om) \cong \ann_\tE \fa$ as noted above.
\end{proof}

\section{Canonical modules via \'etale extensions}\label{canon} 

There are no restrictions on characteristic in this section.
We need the following result, whose history in the literature we describe below.

\begin{theorem}\label{canh} Let $\rmk$ be an excellent Cohen-Macaulay local ring.  Then the Henselization
$R\h$ of $R$ has a canonical module.   \end{theorem}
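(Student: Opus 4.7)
The plan is to produce a finitely generated $R\h$-module $\om'$ whose $\m R\h$-adic completion is isomorphic to a canonical module for $\widehat{R\h}=\wh R$. Once such an $\om'$ is in hand, the characterization of a canonical module (by its top local cohomology being the injective hull of the residue field, together with correct dimension and finite injective dimension) will transfer back from the completion to $R\h$ via the faithful flatness of $R\h\to\wh R$ and the compatibility of local cohomology and $\ext$ with flat base change on finitely generated modules.

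First I would invoke Cohen's structure theorem: $\widehat{R\h}=\wh R$ is a complete Cohen--Macaulay local ring, hence a homomorphic image $T/J$ of a complete regular local ring $T$, so $\om:=\ext^c_T(\wh R,T)$ (with $c=\di T-\di\wh R$) is a canonical module for $\wh R$ and is finitely presented over $\wh R$. Next I would use that $R$ excellent implies $R\h$ excellent (Greco's theorem); the completion map $R\h\to\wh R$ is then a regular homomorphism, and by Popescu's general N\'eron desingularization it is a filtered colimit of smooth $R\h$-algebras, which in turn implies that $R\h$ satisfies the Artin approximation property. The plan is then to apply the module-lifting form of Artin approximation (for excellent Henselian local rings, every finitely presented $\wh R$-module descends to a finitely presented $R\h$-module) to $\om$, producing a finitely generated $R\h$-module $\om'$ with $\widehat{\om'}\cong\om$.

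The main obstacle is precisely the module-lifting step. Naively one takes a presentation $\wh R^m\xra{M}\wh R^n\to\om\to 0$ and sets $\om':=\coker(M')$ for a matrix $M'$ over $R\h$ whose entries approximate those of $M$ mod a high power of $\m$; but entrywise approximation alone does not ensure $\widehat{\om'}\cong\om$, since the isomorphism class of the cokernel after completion is controlled by more delicate conditions on $M$ than just its first-order behavior. The module form of Artin approximation---which is exactly what the excellence of $R\h$ combined with Popescu's theorem delivers---resolves this by producing an $\om'$ over $R\h$ whose completion is \emph{genuinely} isomorphic to $\om$, not merely close to it. Once $\om'$ is constructed, its canonical-module properties for $R\h$ follow from the corresponding properties of $\om$ over $\wh R$ by faithfully flat descent, completing the proof.
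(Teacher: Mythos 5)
Your architecture is the right one---produce a finitely generated $R\h$-module whose completion is a canonical module for $\wh R$, then descend the defining properties along the faithfully flat map $R\h \to \wh R$---and it matches the route the paper takes via the literature. However, the mechanism you invoke for the lifting step is not a theorem. There is no ``module-lifting form of Artin approximation'' asserting that every finitely presented $\wh R$-module descends to a finitely presented $R\h$-module; this fails even for excellent Henselian regular local rings, and is not a consequence of Popescu's theorem or the approximation property. For instance, let $A$ be the Henselization of $k[x,y]_{(x,y)}$, so $\wh A = k[\![x,y]\!]$, and pick $a(x) \in k[\![x]\!]$ transcendental over $k(x)$ with $a(0)=0$. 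If $\wh M \cong \wh A/(y-a(x))$ for some finitely generated $A$-module $M$, then $M$ is cyclic by Nakayama, say $M = A/J$ with $J\wh A = (y-a(x))\wh A$; since $\wh A/(y-a(x))$ is a domain and $A$ is a regular (hence factorial) Henselian local ring, $J = (g)$ for a prime element $g$ associate in $\wh A$ to $y-a(x)$, which forces $g(x,a(x))=0$. But $g$ is algebraic over $k(x,y)$, and substituting $y=a(x)$ into a minimal algebraic relation for $g$ over $k[x,y]$ shows that $a$ is algebraic over $k(x)$, a contradiction. So $\wh A/(y-a(x))$ does not descend to $A$.

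The actual content of Theorem~\ref{canh} is precisely the descent of the canonical module, and this is what Hinich \cite{Hin93} (existence of a dualizing complex for rings with the approximation property) and Rotthaus \cite{Rott96} (a more direct descent of the canonical module in the Cohen--Macaulay case) establish. Their arguments use the approximation property together with the specific characterization of the dualizing complex or canonical module; it is that characterization which lets one encode the existence problem as a finite system of algebraic equations, something unavailable for an arbitrary finitely presented module. The paper's ``proof'' is therefore correctly a pointer to Hinich, Rotthaus, Popescu/Swan (to know that the excellent Henselian local ring $R\h$ has the approximation property), and EGA (for the excellence of $R\h$). Your proposal names the right auxiliary step, but treats as a formal lemma what is in fact the whole theorem.
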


\begin{discussion}  Hinich proved \cite{Hin93} that an approximation ring (also called a ring with
approximation property) has a dualizing complex.  In the \CM case, the existence of a dualizing
complex is equivalent to the existence of a canonical module.  Rotthaus gave a more elementary
proof for the Cohen-Macaulay local case in \cite{Rott96}.  She phrases the result in terms
of rings with the complete approximation property, but since the treatment in \cite{Swan98}, following the ideas
of Popescu, \cite{Po85, Po86}, resolved any doubts about the general case of N\'eron-Popescu
desingularization, we know that every excellent Henselian ring is an approximation ring 
(in fact, a local ring is an approximation ring if and only if it is excellent and Henselian).
It is explained in \cite{Rott95} how this implies that every excellent Henselian ring has the 
complete approximation property as well.  We also note that the Henselization of an excellent
local ring is excellent \cite[Cor.~18.7.6]{EGAIV67}. \end{discussion}

We use Theorem~\ref{canh} to prove the following result.  First recall that a local map of local
 rings is called a {\it pointed} \'etale extension if it is a localization at a prime of a (finitely
 presented)  \'etale extension and the induced map of residue class fields is an isomorphsim.
 In the result below, $z$ is an indeterminate over $R$ and if $f \in R[z]$,  $f'$ denotes the
 derivative of $f$ with respect to $z$.

\begin{theorem}\label{etcanon} Let $R$ be an excellent Noetherian ring, with no restriction 
on the characteristic of $R$, and let $P$ be a prime ideal  of $R$ such that $R_P$ is \CM.  
Let $z$ be an indeterminate over $R$. Then after replacing $R$ by its localization
at one element in $R \sm P$,  there exists a flat extension
$\vR$ of the form $\big(R[z]/fR[z]\big)_g$, where $f$ is a monic polynomial in $R[z]$,  $g \in R[z]$, and 
there exists an element $r \in R$ such that $f(r) \in P$ with the properties listed below.
In describing these properties,  for
every prime $Q$ of $R$ such that $Q \supseteq P$,  we use symbol $\vQ$ to denote the prime ideal 
$\Big(\big(QR[z] + (z-r)R[z]\big)/fR[z]\Big)_g$ in $\big(R[z]/fR[z]\big)_g$. 
 
\benn 
\item $R$ is \CM. 
\item $f'(r)$ is a unit of $R$. 
\item $g(r)$ is a unit of $R$. Hence, $g \notin \vQ$ for any choice of $Q \in V(P)$.
\item $\vQ$ is a prime ideal of $\vR$ lying over $Q$ in $R$. 
\item $\vR$ is \CM and has a global canonical module $\om$. 
\item For all $Q \in V(P)$,  $R_Q \to \vR_{\vQ}$ is a pointed \'etale extension.  
\item $\vR/\vP \cong R/P$, and so under the natural map $\spec(\vR) \to \spec(R)$ induced by 
contraction, $V(\vP)$ maps homeomorphically to $V(P)$. 
\een
Moreover, if we replace $g$ by any multiple $g_1$ in $R_P[z]$ such that  $g_1(r) \notin PR_P$
 (or equivalently, $g_1 \in R_P[z] \sm \big(PR_P[z]+(z-r)R_P[z]\big)$), 
then after replacing  $R$ by its localization at one element of  $R\sm P$,  $g_1$ has coefficients in $R$, 
$g_1(r) \in  R$ is a unit, and all of the above properties $(1)\,$--$\,(7)$ hold for the extension 
$R \to \big(R[z]/fR[z]\big)_{g_1}$. 
\end{theorem}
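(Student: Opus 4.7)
The overall strategy is to pull back the canonical module on the Henselization $R_P\h$ (which exists by Theorem~\ref{canh}) to a pointed \'etale extension of $R_P$, and then spread that data out over a Zariski open neighborhood of $P$ in $\Spec(R)$ by clearing denominators. We begin by localizing $R$ at one element of $R \sm P$ so that $R$ is \CM, using openness of the \CM locus in an excellent ring; this secures (1). Since $R_P\h$ is the filtered colimit of pointed \'etale $R_P$-algebras---all Noetherian, and the colimit is faithfully flat over $R_P$---the finitely generated $R_P\h$-module $\om\h$ together with the finitely presented data witnessing that it is a canonical module (e.g., MCM of type one, both being conditions stable under filtered colimit of Noetherian rings) descend to some pointed \'etale local $R_P$-algebra $S$, so that $S$ already has a canonical module $\om_S$.

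By the standard structure theorem for pointed \'etale extensions, $S$ is the localization of $(R_P[z]/\bar f R_P[z])_{\bar g}$ at the prime $\big(PR_P[z] + (z-\bar r)R_P[z]\big)/\bar f R_P[z]$, where $\bar f \in R_P[z]$ is monic, $\bar r \in R_P$, $\bar f(\bar r) \in PR_P$, and both $\bar f'(\bar r), \bar g(\bar r) \in R_P \sm PR_P$. The local module $\om_S$ is the stalk at $\bar\vP$ of some finitely generated $\om'$ over $(R_P[z]/\bar f R_P[z])_{\bar g}$. We then clear denominators: after localizing $R$ at one element of $R \sm P$ we may take $f \in R[z]$ monic, $g \in R[z]$, $r \in R$, and a finitely generated $\vR$-module $\om$, where $\vR := (R[z]/fR[z])_g$. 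After one further localization of $R$ we may arrange $f'(r), g(r) \in R$ to be units, establishing (2) and (3).

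With $f, g, r$ in hand, (4) and (7) are immediate: $f(r) \in P \inc Q$ makes $\vQ = \big((QR[z] + (z-r)R[z])/fR[z]\big)_g$ a well-defined prime (noting $g \notin QR[z] + (z-r)R[z]$ since $g(r)$ is a unit of $R$), and $z \mapsto r$ gives $\vR/\vP \cong R/P$ and $\vR/\vQ \cong R/Q$. For (6), $R \to R[z]/fR[z]$ is \'etale on $D(f')$, and $f'(z) \equiv f'(r) \pmod{z-r}$ with $f'(r)$ a unit in $R$ forces $f'$ to be a unit modulo $\vQ$; combined with (7), this makes $R_Q \to \vR_{\vQ}$ a pointed \'etale extension. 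For (5), $\vR$ is a flat $R$-algebra with \'etale (hence reduced Artinian) fibers and so inherits Cohen--Macaulayness from $R$; the module $\om$ is a canonical module at $\vP$ by construction, and by openness of the canonical-module locus (Remark~\ref{open}), applied to the excellent ring $\vR$ and the finitely generated module $\om$, there is an element of $\vR \sm \vP$ on which $\om$ becomes a global canonical module. We realize this element as the class of some $h_0 \in R[z]$ with $h_0(r) \notin P$ and absorb $h_0$ into $g$ (localizing $R$ once more so that $h_0(r)$ is a unit); this preserves (2)--(4), (6), (7) while securing (5).

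For the moreover clause, write $g_1 = g \cdot h$ with $h \in R_P[z]$. Since $g_1(r) \notin PR_P$ and $g(r) \in R$ is already a unit, $h(r) \notin PR_P$ as well. After localizing $R$ at one element of $R \sm P$, the elements $h$ and $g_1$ have coefficients in $R$ and $h(r), g_1(r) \in R$ are units. Then $(R[z]/fR[z])_{g_1} = \vR_h$ is a localization of $\vR$, so (1), (2), (3) transfer, (5) is preserved under localization (both Cohen--Macaulayness and the canonical module property localize), (7) is unchanged because $h \notin \vP$, and (4), (6) are verified exactly as before. The hardest step in this plan is the descent of the canonical module from $R_P\h$ to a pointed \'etale $R_P$-algebra---pinning down which finitely presented diagrams actually characterize ``canonical module'' and verifying that they do descend---after which the remaining work is bookkeeping across a handful of further localizations in $R \sm P$.
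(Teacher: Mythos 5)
Your proposal is correct and takes essentially the same route as the paper: descend a canonical module from the Henselization $(R_P)\h$ (Theorem~\ref{canh}) to a pointed \'etale neighborhood of $R_P$, invoke the structure theorem for pointed \'etale extensions, spread $f$, $g$, $r$, and $\om$ out over $R$ after finitely many localizations in $R\sm P$, and use openness (Remark~\ref{open}/Proposition~\ref{e12}) to absorb one more localizing element into $g$ so that $\om$ becomes a global canonical module, with the verification of (1)--(7) and the ``moreover'' clause as bookkeeping. One small caution on your descent step: being a maximal Cohen--Macaulay module of type one does not by itself characterize the canonical module (e.g., $k[[y]] \cong R/(x)$ over $R = k[[x,y]]/(xy)$ is such a module but is not $\om_R \cong R$), so justify the descent instead by noting that the descended module $\om_S$ satisfies $\om_S\otimes_S (R_P)\h \cong \om_1$, that $(R_P)\h$ is the Henselization of $S$, and that the canonical-module property descends along this faithfully flat local map (equivalently, it can be checked after completion) --- which is in effect how the paper argues, by descending a presentation matrix and then concluding from the isomorphism after base change.
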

\begin{proof}
By Theorem~\ref{canh}, we may choose
a canonical module $\om_1$ for the Henselization  $(R_P)\h$ of $R_P$, and represent it as
the cokernel of a matrix $\cM_1$.  The ring $(R_P)\h$ is a direct limit of pointed \'etale extensions of $R_P$.
Hence, $\cM_1$ will descend to a matrix $\cM_2$ over a suitable pointed \'etale extension of $R_P$, and the 
cokernel of $\cM_2$, call it $\om_2$,  will have the property that $(R_P)\h \otimes \om_2 \cong \om_1$.
It follows that $\om_2$ is a canonical module for this pointed \'etale extension of $R_P$.

By the structure theorem for pointed \'etale extensions of a local ring, the local \'etale extension
has the the from $\big(R_P[z]/(f)\big)_\fP$,  where $f = f(z)$ is a monic polynomial in $\fP \subseteq R_P[z]$,  
and $\fP$ is a prime ideal of $R_P[z]$  of the form  $(P + (z - r))R_P[z]$,  where  $r \in R_P$ represents 
a simple root  $\rho$ of the image of$f$ mod $P$ in $\kappa_P$.  The condition that $\rho$ be a simple root 
is equivalent to assuming t hat $f'(r) \notin PR_P$.  

Hence,  we may assume that $\om_2$ is a module over an extension ring
$R_P$  of the form   $\big(R_P[z]/fR_P[z]\big)_\fP$.  However, instead of localizing at all elements 
not in $\fP$,  we may descend to an algebra of the form $\big(R[z]/fR[z]\big)_g \cong R[z]_g/fR[z]_g$:  
for a suitable choice of $g$ $\in R[z] \sm \big(PR[z] + (z-r)R[z]\big)$,  $\om_2$ will
descend to a module $\om$ over $\big(R[z]/fR[z]\big)_g$ such that $\om_\fP$ is a canonical module
for $\Big(\big(R[z]/fR[z]\big)_g\Big)_{\vP} = \big(R_P[z]/fR_P[z]\big)_\fP$.    The condition that 
$g \notin PR[z] + (z-r)R[z]$ is equivalent to the assumption that $g(r) \notin P$. 

By replacing $R$ with its localization at an element
of $R \sm P$, we may additionally assume that $f(z)$ is monic over $R$ , that $g(z) \in R[z]$, that
$r \in R$, that $f'(r)$ is a unit of $R$ (since it is not in $P$) and that $g(r)$ is a unit of $R$.  We are now
in the situation described in the statement of the theorem, and we may make a preliminary choice of
$\vR$ to be $\big(R[z]/fR[z]\big)_g$.  
Note that the natural image of $\fP$, mod $f$, is the localization of $\vP$ at itself. 
However, $\om$ will not yet  necessarily be a global canonical module over $\vR$.  However, 
by Proposition~\ref{e12}, we can localize further at multiple of $g$ not in 
$PR[z] + (z-r)R[z]$ such that this is true, and we may localize
further at one element of $R$ not in $P$ so that for the new choice of $g$ we have that $g(r)$ is
a unit in $R$.  It is now straightforward to verify that the conditions $(1)\,$--$\,(7)$ all hold, and that, after replacing
$R$ by its localization at one element not in $P$, they
continue to hold if we localize further at polynomial in $R_P[z]$ that is not in  $PR_P[z] + (z-r)R_P[z]$:  we may
localize at one element of $R$ not in $P$ so that the new polynomial $g_1$ is in $R[z]$ and so that
$g_1(r)$ is a unit. 
\end{proof} 

\section{The purity exponent}\label{pureexp}  


\subsection{Purity}\label{pur} In this subsection, there are no assumptions on the characteristic of 
the ring and there are no finiteness assumptions on modules, unless such assumptions are 
explicitly stated. A map of $R$-modules $N \to M$ is called {\it pure} if for every $R$-module 
$V$, the induced map $V \otimes N \to V \otimes M$ is
injective.  In particular, $N \to M$ is injective.  This is a weakening of the condition that $N$ be 
a direct summand of $M$ over $R$.  In fact, if $0 \to N \to M \to C \to 0$ is exact and $C$ is finitely presented, 
then $N \to M$ is pure if and only if it the map is split as a map of $R$-modules; that is, $M$ is 
the internal direct sum of the image of $N$ and a submodule $N'$ so that the composite map 
$N \to M \surj M/N'$ is an isomorphism.
A direct limit of pure maps is pure.  Purity is preserved by arbitrary base change and, in particular 
by localization at any multiplicative system of $R$.  Moreover, $N \to M$ is pure if and only if for 
all maximal (respectively, all prime) ideals $P$ of $R$,  $N_P \to M_P$ is pure over $R_P$. For 
further information about purity, we refer the
reader to \cite[\S6]{HoR74}, \cite[\S5.(a)]{HoR76}, and \cite[Lemma 2.1]{HH95}. 

\subsection{Purity exponents: definition and basic facts}\label{pure-e} Throughout the remainder of this section, 
$R$ is a Noetherian ring of prime characteristic $p >0$, and $c$ is an element in $R$. 
In the sequel, $F_R^e$ or simply $F^e$ will denote the $e\,$th iterate of the Frobenius
endomoprhism on $R$, and $\e R$ will denote $R$ viewed as an $R$-algebra via the structural 
homomorphism $R \arwf{F^e} R$. An alternative notation for $\e R$ is  $F^e_*(R)$.  
We write $\e  c$ for the element corresponding to $c$ in $\e R$.
If $W \inc R$, we shall also write $\e\, W$ for the set $\{\e w: w \in W\}$.  
In particular, if $P$ is a prime ideal of $R$,  $\e P$ is the corresponding
prime ideal of $\e R$, and $\e P$ is the radical of the extended ideal $(P)\e R = \e \big(P^{[p^e]}\big)$.

We use the notation  $\theta^R_{e,c}$ (or $\theta_{e,c}$ if the ring $R$ is understood from
context) for the $R$-linear map  $R \to \e R$ 
such that $1 \mapsto \e c$.\footnote{\label{reduced-1/q}When $R$ is reduced, 
we may identify $\e c \in \e R$ with $c^{1/p^e} \in R^{1/p^e}$; hence, 
we may write $\theta_{e,c}$ as $\theta_{e,c}: R \to R^{1/p^e}$ such that $1 \mapsto c^{1/p^e}$.}
The \emph{purity exponent} $\fe_c$ of $c$ is defined to be the least nonnegative integer $e \in \N$ 
such that the map $\theta^R_{e'\!,c}:R \to \nw{e'} R$ is pure over $R$ for all $e' \ge e$, if such an exponent exists.
If no such exponent exists, then $\fe_c$ is defined to be $\infty$.    
Note that $R$ is F-pure if and only if $\fe_1 = 0$ if and only if $\fe_1 < \infty$, and also if and only if $\theta_{e,1}$ is pure for some (equivalently, all) $e \geq 1$.  
If $c'$ is a divisor of $c$ we have that $\fe_{c'} \leq \fe_{c}$, since if $c = c'c''$,  $\theta_{e,c}$ is the composition
$ (\e c'' \cdot \blank) \circ \theta_{e,c'}$.  Hence, the finiteness of $\fe_{c}$ for any $c \neq 0$ implies that $c$ is not
a zerodivisor and that the  ring is F-pure, and therefore reduced. Since these conditions are forced by the
finiteness of $\fe_c$, we do not assume them in general, which may be useful when we consider what happens as we localize
at $R$ a varying prime ideal.

Let $e \ge 1$. Once $\theta_{e,c}$ is pure, this also holds for $\theta_{e'\!,c}$ for all $e' \geq e$. 
It suffices to see this for $e' = e+1$.  But the map $\theta_{e,c}$ induces 
a $\nw{1}R$-pure map (consequently, an $R$-pure map) 
${}^1 \theta_{e,c}:\nw{1}R \to \nw{e+1}R$ such
that $\nw{1} 1_{\nw{1}R} \mapsto \nw{e+1}c$, and we may compose with the $R$-pure map $\theta_{1,1}$ to obtain that 
$\theta_{e+1,c} = {}^1\theta_{e,c} \circ \theta_{1,1}$ is pure. This gives us the following:

\begin{remark}\label{sA} For every $n \ge 0$, let $\sA_n(R)$, usually shortened to  $\sA_n$, 
denote the set $\{r \in R \,:\, \theta_{n, r} \text{ is not pure}\}$. Then $\sA_0$ consists of the non-units,
and $\sA_{n+1} \inc \sA_n$ for all $n \ge 1$.
Note that $\fe_c$ is finite if and only if $c \notin \bigcap_{n=1}^\infty \sA_n$, in which case $\fe_c = 0$
if $c \notin \sA_0 \cup \sA_{1}$ and otherwise $\fe_c$ is the least integer $e \in \N_+$ such that $c \notin \sA_e$.
We shall show shortly, in Theorem~\ref{sAchar}, that when $(R, \m, \kappa)$ is local, $\sA_n$ is an ideal. 
\end{remark}

\begin{definition} Let $c \in R$. For $\fp \in \spec(R)$, we use $\fe_c(\fp)$ to denote $\fe_{c/1}$ in the ring $R_\fp$. This defines a function 
$\fe_c : \spec(R) \to \N \cup \{\infty\}$ where $\fp \mapsto \fe_c(\fp)$.  \end{definition}

Hence, we have the following:

\begin{proposition}\label{purexploc} Let  $R$ be a Noetherian ring of prime characteristic $p$, and $c \in R$. 
\bena
\item $\fe_c(P) \leq \fe_c(Q) \leq \fe_c$ for all prime ideals $P \subseteq Q$ in $R$. 
\item $\fe_c = \sup_{\fp \in \Spec(R)} \fe_c(\fp) = \sup_{\fp \in \Max(R)} \fe_c(\fp)$. 
\een
\end{proposition}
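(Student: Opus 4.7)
My plan for Proposition~\ref{purexploc} is to assemble two general facts about purity collected in \S\ref{pur} together with the elementary stability observation in \S\ref{pure-e}: purity is preserved under arbitrary base change (hence under localization at any multiplicative set), and a map of $R$-modules is pure over $R$ if and only if its localization at every maximal ideal of $R$ is pure. Under the canonical isomorphism $R_\fp \otimes_R {}^eR \cong {}^e(R_\fp)$, the base change of $\theta^R_{e,c}$ along $R \to R_\fp$ is exactly $\theta^{R_\fp}_{e, c/1}$; this is immediate from the definition of $\theta_{e,c}$ as the $R$-linear map sending $1$ to ${}^ec$, and I will use it repeatedly without further comment.

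For part (a), I handle the chain $P \subseteq Q$ in two localization steps: first localize $R$ at $Q$, then further localize $R_Q$ at $PR_Q$ to reach $R_P$. Each step preserves purity, so if $\theta^R_{e,c}$ is $R$-pure then $\theta^{R_Q}_{e, c/1}$ is $R_Q$-pure and $\theta^{R_P}_{e, c/1}$ is $R_P$-pure. Applying this first with $e = \fe_c$ and then with $e = \fe_c(Q)$ yields the claimed inequalities; the cases where some purity exponent equals $\infty$ are vacuous.

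For part (b), part (a) already gives
\[
\sup_{\fp \in \Max(R)} \fe_c(\fp) \;\leq\; \sup_{\fp \in \Spec(R)} \fe_c(\fp) \;\leq\; \fe_c,
\]
since every prime is contained in some maximal ideal. For the reverse direction, set $N := \sup_{\fp \in \Max(R)} \fe_c(\fp)$; if $N = \infty$ there is nothing to show, so assume $N < \infty$. The one subtle point is that the purity exponents at different maximal ideals may differ, so before invoking the local-to-global criterion I must pass from purity at the individual exponents $\fe_c(\m)$ to purity at the single common exponent $N$, which is precisely the content of the observation in \S\ref{pure-e} that purity of $\theta_{e,c}$ implies purity of $\theta_{e',c}$ for all $e' \geq e$. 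With this upgrade in hand, $\theta^{R_\m}_{N, c/1}$ is $R_\m$-pure for every $\m \in \Max(R)$, and the local-to-global criterion for purity forces $\theta^R_{N, c}$ to be $R$-pure, giving $\fe_c \leq N$.

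There is no genuine obstacle; the only step worth explicit care is producing the common exponent $N$ at which purity holds simultaneously after localizing at every maximal ideal, so that the local-to-global characterization can be applied.
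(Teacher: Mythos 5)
Your proof is correct and follows exactly the route the paper intends when it introduces the proposition with ``Hence, we have the following:'' — namely, the ingredients already assembled in \S\ref{pur} and \S\ref{pure-e}: compatibility of $\theta_{e,c}$ with localization (via $R_\fp \otimes_R \e R \cong \e(R_\fp)$), preservation of purity under localization, the maximal-ideal local-to-global criterion for purity, and the monotonicity $\theta_{e,c}$ pure $\Rightarrow$ $\theta_{e',c}$ pure for $e' \ge e$. You correctly identify the one nontrivial point, which is passing to a single exponent $N$ at which purity holds at every maximal ideal simultaneously before invoking the local-to-global criterion.
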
 

\begin{notation}\label{purecrit} Let $M$ be an $R$-module. 
For $e \in \N$, we write $q:=p^e$, $F^e_R(M) := \e R \otimes_R M$ and view $F^e_R(M)$ as an 
$R$-module via $r \cdot \sum_i \e r_i \otimes m_i = \sum_i \e (rr_i) \otimes m_i$ for all $r \in R$ and 
$\sum_i \e r_i \otimes m_i \in \e R \otimes_R M = F^e_R(M)$. 
Also, for $x \in M$, denote  $x^q_M:= 1 \otimes x \in \e R \otimes_R M = F^e_R(M)$. 
When the module $M$ is clear in the context, we may write $x^q _M$ as $x^q$. 
Moreover, we write $\E_R(M)$ for the injective hull over $R$ of the $R$-module $M$, which is unique up to 
non-unique isomorphism.    When $(R,\fm,\kappa)$ is local, we may use the notation $E_R$ or even $E$ 
to denote $\E_R(\kappa)$.  
\end{notation}

When $\fe_c$ is finite, it gives a ``tight closure style"  test exponent for membership:

\begin{proposition} \label{membership} Let $R$ be a Noetherian ring of prime characteristic $p >0$ and let $c \in R$ 
be such that $\fe_{c} < \infty$. Let $M$ be any $R$-module, not necessarily \fg, $N$ be an $R$-submodule of $M$, 
$u \in M$, and $e$ be an integer such that $e \ge \fe_c$. Then $u \in N$ if and only if
$cu^{p^{e}}_M \in N^{[p^{e}]}_M$. \end{proposition}

\begin{proof}  Since $\theta_{e,c}: R \to \nw{e} R$, determined by $1 \mapsto \nw{e}\,c$, 
is pure, the map remains injective if we apply $\blank \otimes_R (M/N)$ to obtain
a map $\theta': R \otimes_R (M/N) \to \nw{e} R  \otimes_R (M/N)$.  Now $cu^{p^{e}}_M \in N^{[p^{e}]}_M$
if and only if the class of $u$ in $M/N$ is contained in $\ker(\theta') = 0$ if and only if $u \in N$. 
\end{proof}  

\begin{remark}  The statement in Proposition~\ref{membership} is particularly useful when $R$ is very strongly F-regular (see subsection~\ref{vstfreg}), because in that case $\fe(c) < \infty$ for all $c \in R\0$, 
where $R\0 = R \sm \bigcup_{\fp \in \Min(R)}\fp$. 
Moreover, one of our main results,
Theorem~\ref{maine}, shows that every excellent strongly F-regular is very strongly F-regular. \end{remark}

\begin{theorem}\label{sAchar} Let notation be as in \ref{purecrit}, with $(R, \m, \kappa)$ local, and let $v$ be a socle generator of $E = \E_R(\kappa)$. Then for all $e$,  $\sA_e$ is the annihilator in $R$ of $v^q \in F^e_R(E)$. 
Thus, for $e \ge 1$, $\fe_c \le e$ if and only if $\theta_{e,c}$ is pure if and only if $0 \neq cv^q \in F^e_R(E)$. 
\end{theorem}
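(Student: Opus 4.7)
My plan is to combine a standard purity criterion for local rings with an explicit computation in $F^e_R(E)$.  The criterion, found in \cite[Lemma~2.1]{HH95} (referenced in \S\ref{pur}), asserts that for a Noetherian local ring $(R,\fm,\kappa)$ with $E = \E_R(\kappa)$, an $R$-module map $\phi \colon N \to M$ is pure if and only if $\phi \otimes_R E$ is injective.  Applying this to $\phi = \theta_{e,c} \colon R \to \e R$, the question of whether $\theta_{e,c}$ is pure reduces to whether the induced map $R \otimes_R E \to \e R \otimes_R E = F^e_R(E)$ is injective.

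Under the natural isomorphism $R \otimes_R E \cong E$, an element $x \in E$ corresponds to $1 \otimes x$ and maps to $\theta_{e,c}(1) \otimes x = \e c \otimes x$ in $F^e_R(E)$.  Using the external $R$-action on $F^e_R(E)$ from Notation~\ref{purecrit}, namely $r \cdot (\e s \otimes x) = \e{rs} \otimes x$, one computes $\e c \otimes x = c \cdot (\e 1 \otimes x) = c \cdot x^q = cx^q$.  Hence the tensored map is the $R$-linear map $E \to F^e_R(E)$ given by $x \mapsto cx^q$.

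Since $E$ is an essential extension of its one-dimensional socle $\soc(E) = \kappa v$, every nonzero submodule of $E$ meets $\soc(E)$ nontrivially.  Therefore the map $x \mapsto cx^q$ is injective if and only if its restriction to $\soc(E)$ is, if and only if $cv^q \neq 0$ in $F^e_R(E)$.  Combining, $\theta_{e,c}$ is pure if and only if $cv^q \neq 0$, and consequently $\sA_e = \Ann_R(v^q)$, which is visibly an ideal of $R$ (confirming the claim from Remark~\ref{sA}).  The remaining equivalence $\fe_c \leq e \iff \theta_{e,c}$ pure is immediate from the definition of $\fe_c$ together with the monotonicity already noted in \S\ref{pure-e}.

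The one point that requires care is keeping straight the two $R$-module structures on $\e R$: the intrinsic one via $F^e$ used to form the tensor product $\e R \otimes_R E$, and the external one making $F^e_R(E)$ an $R$-module.  Once this is sorted out, the argument is a short deduction; I anticipate no serious obstacle, as the purity criterion from \cite{HH95} and the essentiality of $E$ over its socle do the real work.
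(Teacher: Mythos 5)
Your proof is correct and follows essentially the same route as the paper's: both invoke the purity criterion from \cite[Lemma~2.1]{HH95} to reduce to injectivity of $\theta_{e,c}\otimes_R \operatorname{id}_E$, and both deduce that this injectivity is equivalent to $cv^q \neq 0$ via the essentiality of $E$ over its one-dimensional socle. You spell out the essentiality step and the two $R$-module structures on $\e R\otimes_R E$ a bit more explicitly than the paper does, but the argument is identical in substance.
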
 

\begin{proof}  By \cite[Lemma 2.1(e)]{HH95}, 
the map $\theta_{e,c}: R \to \e R$, with $1 \mapsto \e c$, is pure if and only
the induced map $\theta_{e,c} \otimes_R \id_E$ is injective, which holds if and only if
the element $1 \otimes v$ in $R \otimes_R E$ does not map to 0, i.e., $0 \neq \e c \otimes v \in \e R \otimes_R E$.  
In terms of Notation~\ref{purecrit}, we see that $\theta_{e,c}$ is pure if and only if $0 \neq cv^q \in F^e_R(E)$. The rest of the claims, concerning $\sA_e$ or $\fe_c$, are clear now, given that $\sA_{n+1} \inc \sA_n$ for all $n \ge 1$.
\end{proof}

\subsection{Flat regular extensions} 
We have the following:

\begin{theorem}\label{geomreg} Let $\phi: R \to S$ be a flat ring homomorphism and let $c\in R$.  
\bena
\item If $\phi: R \to S$ is faithfully flat (e.g., $(R, \fm) \to (S, \fn)$ is local), then $\fe_c \leq \fe_{\phi(c)}$. 
\item If $(S/\phi^{-1}(Q)S)_Q$ is regular for all $Q \in \Max(S)$ (e.g., $(R, \fm) \to (S, \fn)$ is local with regular closed fiber), then $\fe_{\phi(c)} \leq \fe_c$. \een
\end{theorem}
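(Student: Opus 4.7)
Both parts rest on the factorization
\[
\theta^S_{e, h(c)} \;=\; \mu_e \circ \bigl(\theta^R_{e,c} \otimes_R \id_S\bigr),
\]
where $\mu_e \colon \e R \otimes_R S \to \e S$ is the relative Frobenius given by $\e r \otimes s \mapsto \e(h(r) s^{p^e})$; this identity is a direct check with the $S$-module structures on the two sides.

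For part~(a), if $\theta^S_{e, h(c)}$ is pure then its first factor $\theta^R_{e, c} \otimes_R \id_S$ in the composition above is $S$-pure, by a standard property of purity under composition. Because $h$ is faithfully flat, $S$-purity of $\theta^R_{e, c} \otimes_R \id_S$ implies $R$-purity of $\theta^R_{e, c}$: for any $R$-module $V$, the map $V \otimes_R \theta^R_{e, c}$ becomes injective after $\otimes_R S$ by the $S$-purity just noted, and faithful flatness of $h$ reflects that injectivity. Hence $\fe_c \le \fe_{h(c)}$.

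For part~(b), assume $\theta^R_{e, c}$ is pure; then $\theta^R_{e, c} \otimes_R \id_S$ is automatically $S$-pure, since purity is preserved under arbitrary base change. Purity of $\theta^S_{e, h(c)}$ is local on the maximal spectrum of $S$, so I reduce via Proposition~\ref{purexploc} to the case where $(R, \fm, \kappa) \to (S, \fn, \ell)$ is flat local with regular closed fiber of dimension $d$, with $\theta^R_{e, c}$ still pure on $R$. Lift a regular system of parameters of $S/\fm S$ to $\ul y = y_1,\dotsc,y_d \in \fn$; by flatness $\ul y$ is a regular sequence on $S$ and $\fn = \fm S + (\ul y) S$. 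By Theorem~\ref{sAchar}, what remains is to verify $\e h(c) \otimes w \ne 0$ in $\e S \otimes_S E_S$ for $w$ a socle generator of $E_S$.

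The crux is to transfer the known nonvanishing $\e c \otimes v \ne 0$ in $\e R \otimes_R E_R$ (from purity of $\theta^R_{e,c}$, via Theorem~\ref{sAchar}) to the analogous nonvanishing over $S$. Exploiting the regular closed fiber, identify $E_S$ with $H^d_{(\ul y)}(E_R \otimes_R S)$---the flat-extension counterpart of Theorem~\ref{oR}(d)---so that $w$ corresponds, up to a unit, to the class $[v \otimes 1_S;\, \ul y]$ in the notation of~\ref{lcnot}, where $v$ is a socle generator of $E_R$. The parallel description of $\e S \otimes_S E_S$ then represents $\e h(c) \otimes w$ as the class $[\e c \otimes v \otimes 1_S;\, \ul y]$ in a local cohomology of $(\e R \otimes_R E_R) \otimes_R S$. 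Corollary~\ref{Annihilator}, applied with $M := \e R \otimes_R E_R$, $u := \e c \otimes v$, and $R' := S$, certifies that this class is nonzero, provided $\e R \otimes_R E_R$ is \Rpp---a property furnished by Theorem~\ref{main}. Hence $\theta^S_{e, h(c)}$ is pure and $\fe_{h(c)} \le \fe_c$. The main obstacle is erecting the local-cohomology identification of $E_S$ in the flat-extension setting, with the socle generator correctly matched, and verifying the pseudoflatness hypothesis needed to invoke Corollary~\ref{Annihilator}; once both are in place, the conclusion follows mechanically.
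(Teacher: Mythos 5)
Part~(a) is correct and essentially equivalent to the paper's argument; you factor $\theta^S_{e,h(c)}$ through $\theta^R_{e,c}\otimes_R \id_S$ and use faithful flatness to reflect injectivity, while the paper factors the composite $R\to \e S$ through $\theta^R_{e,c}$, but both hinge on the same two facts (the first factor of a pure map is pure; faithful flatness reflects injectivity).

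Part~(b) follows the same blueprint as the paper (reduce to local with regular closed fiber, realize $E_S$ as top local cohomology of $E_R\otimes_R S$, then compute the annihilator of $\e h(c)\otimes w$), but two of your justifications do not hold under the hypotheses of Theorem~\ref{geomreg}. First, you attribute the pseudoflatness of $\e R\otimes_R E_R$ to Theorem~\ref{main}; but Theorem~\ref{main} requires $R_P$ (here $R$ itself, since $P=\fm$) to be Cohen--Macaulay with a canonical module, and no such assumption is made in Theorem~\ref{geomreg}. The \Rp{R}{\fm} property you need is in fact true for entirely elementary reasons --- over a local ring $(R,\fm,\kappa)$ every module in which each element is killed by a power of $\fm$ is a directed union of finite-length modules, each of which has a finite filtration with $\kappa$-free factors, so parts~(a),(b),(c) of Proposition~\ref{reRpf} apply --- but that argument must be given, not borrowed from Theorem~\ref{main}. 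You also need to note that $M/Ru$, not just $M$, is \Rpp, though the same observation covers it. Second, the identification $E_S \cong H^d_{(\uy)}(E_R\otimes_R S)$ with matched socle generators is attributed to "the flat-extension counterpart of Theorem~\ref{oR}(d)"; Theorem~\ref{oR} again carries excellence, CM-quotient, and S$_2$ hypotheses that are absent here. The correct source is \cite[Lemma~7.10]{HH94a}, which the paper invokes and which is also the source of the key flatness fact that $S/(\uy^t)S$ is faithfully flat over $R$ (and hence that $\uy^t$ is regular on $S\otimes_R W$ for \emph{any} $R$-module $W$). Once those two citations are repaired, your argument goes through; note, though, that the paper's route is lighter, since with the flatness of $S/(\uy^{p^n})S$ in hand the annihilator of $v^{p^n}\otimes u^{p^n}$ is computed directly without deploying Corollary~\ref{Annihilator} at all.
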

\begin{proof} In (a), if $S \to \e S$ with $1 \mapsto \e \phi(c)$ is pure over $S$, and so over $R$. Since $R \to S$ is 
faithfully flat, we have that $\phi: R \to S$ is pure over $R$, and so $R \to \e S$ with $1 \mapsto \e c$
is pure.  Since this map factors $R \to \e R \to \e S$, where the first map is $\theta_{e,c}$, it follows that
$\theta_{e,c}$ is pure over $R$.  

(b) For any $\fn \in \Max(S)$, let $\fm = \phi^{-1}(\fn)$ be its contraction to $R$. If $\fe_{\phi(c)}(\fn) \le \fe_c(\fm)$ 
for all $\fn \in \Max(S)$, then $\fe_{\phi(c)} = \sup_{\fn \in \Max(S)}\fe_{\phi(c)}(\fn) \leq \fe_c$. Thus, this reduces to 
the local case $\phi: (R, \fm) \to (S, \fn)$ with regular closed fiber. Now the result follows from \cite[Lemma~2.15]{HoY23}.
\end{proof}

\subsection{Strongly F-regular and very strongly F-regular rings}\label{vstfreg}
Let $R\0$ denote the complement of the union of all minimal primes of $R$, i.e., $R\0 = R \sm \bigcup_{\fp \in \Min(R)}\fp$. 
If $\fe_c$ is finite for all $c \in R\0$ then $R$ is called  {\it very strongly
F-regular} in the terminology of \cite{Hash10} and {\it F-pure regular} in the terminology of \cite{DaSm16}. We will use the
terminology ``very strongly F-regular."   If this condition holds for all local rings
of $R$ at maximal ideals (equivalently, prime ideals) then $R$ is called {\it strongly F-regular.}  This terminology
is proposed in \cite{Ho07}, and it is also used in \cite{Hash10}.  We also note that $R$ is strongly F-regular in this
sense if and only if one of the following two equivalent conditions holds:
\ben
\item For every $R$-module $M$ and every $R$-submodule $N \inc M$ (with no assumption about finite generation),
$N$ is tightly closed in $M$.
\item For every maximal ideal $\fm$ in $R$,  the submodule $0$ is tightly closed in the injective hull $\E_R(R/\fm)$.
\een
For further background on the theory of strongly F-regular rings, we refer the reader to
 \cite{Ab01, AL03, HH89, HH94b, HL02, LS99, PT18, SchSm10, Sm00, Tu12, Yao06}.

One of our main goals in the sequel is to prove that strongly F-regular excellent rings are very 
strongly F-regular. This is an obvious question, raised, for example, in \cite{DaSm16}.   
It is clear that a strongly F-regular ring is very strongly F-regular if $R$ is local. 
It is also known that if $R$ is F-finite or essentially of finite type over an excellent semilocal ring then strongly 
F-regular rings are very strongly F-regular (and under somewhat weaker hypotheses: see \cite{DEST25}).  We refer to \cite[\S2]{HoY23} for a thorough discussion of previously known results, many of which may be found  in \cite{Ho07}, \cite{Hash10}, \cite{DaSm16}, \cite{DEST25} as well as in \cite{HoY23}. Another of our main goals is
to prove that for every excellent Noetherian ring of prime characteristic $p >0$,  the strongly F-regular
locus is open. This was previously known only in special cases.  

\begin{discussion}{\bf Calculation of \boldmath $\sA_e$ from a canonical module.} Let $(R, \fp, \kappa)$ be \CM with canonical module
 $\om_R$,  $\uz =\vect z s$ be a \sop for $R$, and $v$ be a socle generator in the local cohomology  $E:= H^s_{(\uz)}(\om)$ corresponding
 to a socle element that is the image of $u$ in $\om_R/(\uz)\om_R$, where $u \in \om_R$. 
Then $$\sA_e =\dlim t \Big((\vct z {qt} s) F^e_R(\om_R):_R z^{qt-q}u^q\Big),$$ where $z=\prod_{j=1}^sz_j$ is the product of the $z_j$, and  
where $u^q$ is in $F^e_R(\om_R)$.  Note that, by Theorem~\ref{sAchar}, $\sA_e$ is $0:_{R} v^q = \ann_R(v^q)$, where $v^q \in F^e_R \big(H^s_{(\uz)}(\om)\big) \cong H^s_{(\uz)}\big(F^e_R(\om)\big)$.
\end{discussion}

\subsection{Semicontinuity} 
Our object is to prove the following result.

\begin{theorem}\label{mainsmct} Let $R$ be a homomorphic image of an excellent Cohen-Macaulay Noetherian 
ring of prime characteristic $p >0$ such that $R$ is \stwo and let $c \in R$. 
Then, for any given $e$ $\in \N \cup \{\infty\}$, the set $\{\fp \in \Spec(R): \fe_c(\fp) \leq e\}$ is Zariski open. 
In other words, the function $\fe_c : \spec(R) \to \N \cup \{\infty\}$ is upper semicontinuous. 
\end{theorem}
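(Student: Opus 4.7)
The case $e=\infty$ is trivial, so fix a finite $e\in\N_+$ and set $T:=\{\fp:\fe_c(\fp)>e\}$. By Proposition~\ref{purexploc}(a), $T$ is closed under specialization. In a Noetherian spectrum, a set closed under specialization is Zariski-closed if and only if it is constructible, and by standard Noetherian induction on closed subsets, constructibility of $T$ reduces to the following local statement: for each $P\in\spec(R)$ with $\fe_c(P)\leq e$, there exists an open $V\ni P$ in $\spec(R)$ with $\fe_c(Q)\leq e$ for every $Q\in V\cap V(P)$.

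Fix such a $P$. Write $R=\tR/\fa$ for an excellent \CM ring $\tR$ and let $\tP$ be the preimage of $P$ in $\tR$. By Theorem~\ref{oR}, we may localize $R$ and $\tR$ at suitable elements outside $P$ and $\tP$ to assume that $A:=R/P$ is regular, $\tR$ is \CM with global canonical module $\tom$, and $\om:=\Ext^k_{\tR}(R,\tom)$ (with $k=\dim\tR_\tP-\dim R_P$) satisfies $\ann_{H^h_P(\om)}P=Au$ for a fixed $u\in H^h_P(\om)$, where $h=\dim R_P$; moreover, for every $Q\in V(P)$ and every regular SOP $\uy=\vect y d$ of $A_Q$ (where $d=\dim A_Q$), $\E_{R_Q}(\kappa_Q)\cong H^d_{(\uy)}(H^h_P(\om))_Q$ with socle generator $[u;\uy]$.

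Put $M:=H^h_P(\om)$, so $M^q:=F^e(M)\cong H^h_P(F^e(\om))$ since Frobenius commutes with local cohomology. Theorem~\ref{main}(f) applied to the finitely generated module $F^e(\om)$, Proposition~\ref{ff}(c) applied to the cyclic submodule $Ru^q\subseteq M^q$, and Proposition~\ref{reRpf}(g) combine (after one more localization at an element of $R\setminus P$) to yield that $M^q$ and $M^q/Ru^q$ are \Rpp. Since $F^e$ sends $[u;\uy]$ to $[u^q;\uy^q]$, Corollary~\ref{Annihilator} applied to $u^q\in M^q$ with flat extension $R_Q$ and regular sequence $\uy^q$ (regular on $A_Q$ because $\uy$ is a regular SOP of the regular local ring $A_Q$) gives
\[
\ann_{R_Q}\bigl([u^q;\,\uy^q]\bigr)=\ann_R(u^q)R_Q+(\uy^q)R_Q.
\]
Combining this with Theorem~\ref{sAchar}, which identifies $\fe_c(Q)\leq e$ with $cv_Q^q\neq 0$, we obtain
\[
\fe_c(Q)\leq e\;\Longleftrightarrow\;c\notin\ann_R(u^q)R_Q+(\uy^q)R_Q.
\]

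The main obstacle is to prove, after at most one further localization at an element of $R\setminus P$, that this right-hand condition holds for every $Q\in V(P)$ in the resulting neighborhood and every regular SOP $\uy$ of $A_Q$. At $P$ itself (where $\uy$ is empty) the condition reads $c\notin\ann_R(u^q)R_P$, which is our hypothesis. Note that $\sqrt{\ann_R(u^q)}=P$: $Pu=0$ implies $P^{[q]}\subseteq\ann_R(u^q)$, while the hypothesis $\fe_c(P)\leq e$ forces $u^q_P\neq 0$, whence $\ann_R(u^q)\subseteq P$. Writing $\bar R:=R/\ann_R(u^q)$ (whose spectrum coincides with $V(P)$) and letting $\bar c\in\bar R$ be the image of $c$, the task becomes to show that $\bar c$ stays out of $(\bar\uy^q)\bar R_{\bar Q}$ in a neighborhood of the generic point of $\spec(\bar R)$. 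This is the content of the key local lemma (compare Lemma~\ref{mainloclemma}): using the \ff filtrations on $M^q$ and $M^q/Ru^q$, whose graded factors are free of finite rank over the regular ring $A$, one trades the prime-dependent sequence $\uy$ for a uniform statement about Frobenius powers of parameter ideals in $A$ (together with nilpotent data from the surjection $\bar R\surj A$), which is controlled by inverting a single nonzero element of $A$. Combining with the constructibility reduction of the first paragraph completes the proof.
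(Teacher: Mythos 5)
Your overall architecture matches the paper's: reduce via the Nagata openness criterion (Theorem~\ref{thm:open}, equivalently your constructibility argument) to a local statement at $P$, then use Theorem~\ref{oR}, the \ff results, and Corollary~\ref{Annihilator} to control the socle element in $F^e_{R_Q}\big(\E_{R_Q}(\kappa_Q)\big)$ uniformly in $Q$. But there are two genuine gaps.

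First, you invoke Theorem~\ref{oR} as if a finitely generated $\tR$-module $\tom$ with $\tom_\tP$ a canonical module for $\tR_\tP$ were automatically available once $\tR$ is excellent Cohen--Macaulay. It is not: an excellent Cohen--Macaulay local ring need not possess a canonical module, which is exactly the ``technical problem'' the paper warns about and addresses in \S\ref{canon}. The paper's proof of Lemma~\ref{mainloclemma} first passes, via Theorem~\ref{etcanon}, to a pointed \'etale extension $\vtR$ of $\tR$ which does have a canonical module, and then transports the purity-exponent inequalities back and forth along $R_Q \to \vR_{\vQ}$ using Theorem~\ref{geomreg}. Without this step, the hypothesis of Theorem~\ref{oR} (and hence everything downstream) is unjustified.

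Second, you apply Corollary~\ref{Annihilator} to $u^q$ rather than to $cu^q$, which turns an easy observation into a hard one. The paper applies the corollary to $cu^q$ (having localized so that $M$ and $M/R(cu^q)$ are \ff, hence \Rpp), giving $\ann_{R_Q}(cv^q)=\Ann_R(cu^q)R_Q+(\uy^q)R_Q$. Since $\fe_c(P)\le e$ forces $\Ann_R(cu^q)\subseteq P\subseteq Q$ and $\uy^q\subseteq Q$, this annihilator lies inside the proper ideal $QR_Q$, so $cv^q\neq 0$ and one is done immediately --- no further uniformity argument is needed. Your version instead requires $c\notin \ann_R(u^q)R_Q+(\uy^q)R_Q$, which you correctly identify as the remaining obstacle; but the containment $\ann_R(u^q)R_Q+(\uy^q)R_Q\subseteq QR_Q$ tells you nothing, since $c$ may lie in $Q$. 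Your concluding paragraph sketching how to ``trade the prime-dependent sequence $\uy$ for a uniform statement about Frobenius powers of parameter ideals in $A$'' and appealing to ``compare Lemma~\ref{mainloclemma}'' is not a proof; it gestures at a statement essentially equivalent to what is to be shown. The fix is simply to run the annihilator computation with $cu^q$ in place of $u^q$.

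A minor point: when you write ``Theorem~\ref{main}(f) applied to the finitely generated module $F^e(\om)$,'' recall that $R_P$ is only S$_2$, not Cohen--Macaulay, so Theorem~\ref{main} does not apply over $R$ directly; one either applies it over $\tR$ (as Remark~\ref{quot} indicates and as the paper does, noting $H^h_P = H^h_{\tP}$ on modules killed by $\fa$) or invokes Theorem~\ref{main-g}(d) over $R$.
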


Before proving Theorem~\ref{mainsmct}, we record an important consequence:

\begin{theorem}\label{maine} If an excellent Noetherian ring $R$ of prime characteristic $p >0$ is strongly F-regular, then it is very strongly F-regular. 
\end{theorem}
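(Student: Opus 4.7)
The plan is to verify that $R$ itself meets the hypotheses of Theorem~\ref{mainsmct}, and then to combine its upper semicontinuity conclusion with the quasi-compactness of $\Spec(R)$. Fix $c \in R\0$; it suffices to establish that $\fe_c < \infty$.

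The first step is to check that $\fe_c(\fp) < \infty$ for every prime $\fp$ of $R$. Choosing a maximal ideal $\fm \supseteq \fp$, the ring $R_\fm$ is very strongly F-regular by hypothesis, and I would invoke Theorem~\ref{geomreg}(b) for the flat local map $R_\fm \to R_\fp$ (whose fibers are localizations of $R_\fm/\fp R_\fm$ at primes, hence regular) to transfer very strong F-regularity down to $R_\fp$. Since the minimal primes of $R_\fp$ correspond to minimal primes of $R$ contained in $\fp$, the image of $c$ still lies in $R_\fp\0$, so $\fe_c(\fp) < \infty$.

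Next I would verify the hypotheses of Theorem~\ref{mainsmct} for $R$. Each $R_\fm$ is strongly F-regular, hence normal and, by the standard implications that a strongly F-regular local ring is F-rational and that an excellent F-rational local ring is Cohen-Macaulay, also Cohen-Macaulay. Both properties globalize, so $R$ is S$_2$ and Cohen-Macaulay. In particular $R$ is an excellent Cohen-Macaulay ring and is (trivially) a homomorphic image of itself, so Theorem~\ref{mainsmct} applies to $R$, and the function $\fe_c : \Spec(R) \to \N_+ \cup \{\infty\}$ is upper semicontinuous.

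The final step is a compactness argument. Setting $U_e := \{\fp \in \Spec(R) : \fe_c(\fp) \le e\}$, upper semicontinuity says each $U_e$ is open, and the first step shows $\Spec(R) = \bigcup_{e \ge 1} U_e$. Since $\Spec(R)$ is quasi-compact and the chain $U_1 \subseteq U_2 \subseteq \cdots$ is ascending, some $U_E$ already covers $\Spec(R)$. Proposition~\ref{purexploc}(b) then yields $\fe_c = \sup_{\fp} \fe_c(\fp) \le E < \infty$, as required. The main point to justify is the globalization of the Cohen-Macaulay hypothesis needed to invoke Theorem~\ref{mainsmct}; but since $R$ is excellent and each $R_\fm$ is Cohen-Macaulay, no real obstacle arises once one cites that excellent strongly F-regular local rings are Cohen-Macaulay.
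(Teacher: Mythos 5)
Your proof is correct and follows essentially the same route as the paper's: use the upper semicontinuity of $\fe_c$ from Theorem~\ref{mainsmct} together with the quasi-compactness of $\Spec(R)$ to obtain a uniform bound. You additionally verify explicitly that $R$ satisfies the hypotheses of Theorem~\ref{mainsmct} (that $R$ is S$_2$ and a homomorphic image of an excellent Cohen-Macaulay ring), a detail the paper's proof leaves implicit and which is worth spelling out; on the other hand, your first reduction from maximal to arbitrary primes via Theorem~\ref{geomreg}(b) is heavier than necessary, since Proposition~\ref{purexploc}(a) (or the parenthetical ``(equivalently, prime ideals)'' in the paper's definition of strong F-regularity in \S\ref{vstfreg}) gives $\fe_c(\fp) \le \fe_c(\fm) < \infty$ directly.
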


\begin{proof}  Let $c \in R\0$. Since the ring is strongly F-regular, for every $P \in \Spec(R)$, the purity
exponent  $\fe_c(P)$ is finite for $c/1 \in R_P$.   By Theorem~\ref{mainsmct}, this defines a Zariski open neighborhood
$U_P:= \{Q \in \Spec(R): \fe_c(Q) \leq \fe_c(P)\}$ of $P$.  The open sets $U_P$, $P \in \Spec(R)$, 
cover $\Spec(R)$ and have a finite subcover, $U_{P_i}$, $1 \leq i \leq m$.  Let
$e$ be the maximum of the integers $\fe_{c}(P_i)$, $1 \leq i \leq m$.  Then $(\theta_{e,c})_P$ is pure 
over $R_P$ for all $P \in \Spec(R)$, and so $\theta_{e,c}$ is pure over $R$.  See subsections~\ref{pur} and \ref{pure-e}.  
\end{proof}

To prove Theorem~\ref{mainsmct}, we will need several preliminary results, including  
the following result on openness, based on an idea of Nagata. See \cite[Theorem~24.2]{Mat87} and 
\cite[Lemma~5.16.5]{StProj}  for a generalization to Noetherian topological spaces.
Note that there is no assumption about the characteristic of $R$.

\begin{theorem}\label{thm:open}  Let $R$ be any ring, and $U \inc \Spec(R)$.  Consider the conditions:
\beni
\item For $P,\, Q \in \Spec(R)$, if $P \inc Q$ and $Q \in U$, then $P \in U$. 
\item For every $P \in U$, $U \cap V(P)$ contains a non-empty open subset of $V(P)$. 
\een
Then
\bena
\item If $U$ is open then $U$ satisfies {\rm (i)} and {\rm (ii)}. 
\item Assume that $R$ is Noetherian. If $U$ satisfies {\rm (i)} and {\rm (ii)} then $U$ is open.  
\een
\end{theorem}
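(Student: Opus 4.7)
The plan is to prove (a) by direct verification, and (b) by Noetherian induction on closed subsets of $\Spec(R)$, following Nagata's standard strategy.

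For part (a), suppose $U$ is open, so $U = \Spec(R) \setminus V(I)$ for some ideal $I$. Condition (i) is immediate: if $P \subseteq Q$ and $I \not\subseteq Q$, then a fortiori $I \not\subseteq P$. Condition (ii) is also immediate, since $U \cap V(P)$ is open in $V(P)$ and contains $P$, hence is non-empty.

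For part (b), I assume $R$ is Noetherian, so $\Spec(R)$ is a Noetherian topological space. Let $Z := \Spec(R) \setminus U$, and suppose for contradiction that $U$ is not open. Let $\mathcal{F}$ be the collection of closed subsets $C \subseteq \Spec(R)$ such that $C \cap U$ is not open in $C$. Then $\Spec(R) \in \mathcal{F}$, so by Noetherian induction I may choose a minimal element $C \in \mathcal{F}$.

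I first claim $C$ is irreducible. Otherwise $C = C_1 \cup C_2$ with each $C_i$ a proper closed subset of $C$. By minimality, $C_i \cap U$ is open in $C_i$, equivalently $C_i \cap Z$ is closed in $C_i$, hence closed in $\Spec(R)$. Then $C \cap Z = (C_1 \cap Z) \cup (C_2 \cap Z)$ is closed in $C$, so $C \cap U$ is open in $C$, contradicting $C \in \mathcal{F}$. Hence $C = V(P)$ for some prime ideal $P$.

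Now I split into cases on whether $P \in U$. If $P \notin U$, the contrapositive of (i) (applied with the roles of $P$ and $Q$ as in the statement) shows that every $Q \supseteq P$ lies in $Z$, so $V(P) \cap U = \emptyset$, which is trivially open in $V(P)$, contradicting $V(P) \in \mathcal{F}$. If $P \in U$, condition (ii) provides a non-empty open subset $V$ of $V(P)$ with $V \subseteq U \cap V(P)$; the complement $V(P) \setminus V$ is then a proper closed subset of $V(P)$, so by minimality $(V(P) \setminus V) \cap Z$ is closed in $V(P) \setminus V$, hence closed in $V(P)$. Since $V \cap Z = \emptyset$, we have $V(P) \cap Z = (V(P) \setminus V) \cap Z$, which is closed in $V(P)$, so $V(P) \cap U$ is open in $V(P)$, contradicting $V(P) \in \mathcal{F}$. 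Both cases yield contradictions, so $U$ is open.

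The argument is essentially topological bookkeeping; the main subtlety is just making sure one consistently tracks ``open/closed in $C$'' versus ``open/closed in $\Spec(R)$'' when decomposing $C$, and that $V \cap Z = \emptyset$ is used precisely to rewrite $V(P) \cap Z$ in terms of the proper closed subset $V(P) \setminus V$ on which the minimality hypothesis applies.
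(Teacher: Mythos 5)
The paper does not prove this theorem; it cites \cite[Theorem~24.2]{Mat87} and \cite[Lemma~5.16.5]{StProj} and uses it as a black box. Your proof is correct, and it is essentially the standard Noetherian-induction argument found in those references: pick a minimal closed subset $C$ on which $U$ fails to be relatively open, show $C$ is irreducible by combining the two proper pieces, write $C = V(P)$, and then use condition~(i) when $P\notin U$ (to get $V(P)\cap U=\emptyset$) and condition~(ii) when $P\in U$ (to shrink to the proper closed subset $V(P)\setminus V$ where minimality applies and observe $V(P)\cap Z=(V(P)\setminus V)\cap Z$). All the relative-topology bookkeeping is handled correctly --- in particular you use, where needed, that a closed subset of a closed subset of $\Spec(R)$ is closed in $\Spec(R)$, and that $V\cap Z=\emptyset$ because $V\subseteq U$. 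Part~(a) is also verified correctly. Since the paper offers no proof, you are filling in a genuine gap, and you have done so with the expected argument.
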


To prove Theorem~\ref{mainsmct}, it suffices to assume that $e \in \N$ and to show that the set 
$U_{c,e} := \{\fp \in \Spec(R): \fe_c(\fp) \leq e\}$ is open in $\spec(R)$. Proposition~\ref{purexploc}(a) says 
that, for $P,\, Q \in \Spec(R)$, if $P \inc Q$ and $Q \in U_{c,e}$, then $P \in U_{c,e}$. To apply 
Theorem~\ref{thm:open}, it remains to show that, for every $P \in U_{c,e}$, $U_{c,e} \cap V(P)$ contains 
a non-empty open subset of $V(P)$. Theorem~\ref{mainsmct} now follows at once from the 
Key Lemma (i.e., Lemma~\ref{mainloclemma}) that immediately follows. In proving the Key Lemma, 
we will need to replace $\tR$ by an \'etale extension, as in \S\ref{canon}. 
We will also need to view $R$ as $\tR/\fa$, as in Theorem~\ref{oR}. It may be helpful to the reader to
review the notation from \S\S\ref{hull}--\ref{canon}.

\begin{lemma}[Key Lemma]\label{mainloclemma}
Let $R$ be a homomorphic image of an excellent ring $\tR$ of prime characteristic $p >0$, say $R = \tR/\fa$, 
let $c \in R$, let $e \in \N$ be a nonnegative integer.  Let  $\tP \in \Spec(\tR)$ be such that $\fa \inc \tP$ and   
$\tR_\tP$ is \CM.  Denote $P := \tP/\fa$, and assume that $R_P$  is \stwo. Suppose that 
$\fe_c(P) \leq e$. Then there exists $g\in R\sm P$ such that $\fe_c(Q) \le e$ for all $Q \in D(g) \cap V(P)$. 
\end{lemma}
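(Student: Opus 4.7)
The plan is to rewrite the condition $\fe_c(Q) \le e$, for each $Q \in V(P)$, as the containment in $Q$ of a single \emph{fixed} ideal of $R$; the hypothesis $\fe_c(P) \le e$ will then force this ideal into $P$, hence into every $Q \in V(P)$, so the conclusion holds on the whole of $V(P)$ once the finitely many auxiliary localizations are rolled into a single element $g \in R\sm P$. To set this up, first use that $\tR$ is excellent together with Theorem~\ref{etcanon} to replace $\tR$ by a pointed \'etale extension $\vR$ (and $R$ by $\vR/\fa\vR$) so that $\vR$ carries a finitely generated module $\tom$ whose localization at $\vP$ is canonical for $\vR_\vP$; this reduction is harmless because pointed \'etale extensions preserve $\fe_c(Q)$ at each prime by Theorem~\ref{geomreg} and because $V(\vP) \to V(P)$ is a homeomorphism. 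Then Theorem~\ref{oR} permits a further localization of $\tR$ at one element of $\tR\sm\tP$ so that $A = \tR/\tP$ is regular, $\tR$ is \CM with $\tom$ a global canonical module, $R$ is S$_2$ and locally equidimensional, and, with $\om := \ext^k_\tR(R,\tom)$ and $E := H^h_P(\om)$, there is $u \in E$ with $\ann_E P = Au$ together with the uniform identification
\[
\E_{R_Q}(\kappa_Q) \;\cong\; H^d_{(\uy)}(E)_Q, \qquad v_Q := [u;\uy]\text{ a socle generator over } R_Q,
\]
valid for every $Q \in V(P)$ and every $\uy = \vect y d \in \tQ$ whose image is a regular system of parameters for $A_\tQ$.

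Set $q = p^e$, $\om^{(e)} := F^e_R(\om)$, and $E^{(e)} := H^h_P(\om^{(e)})$. By Theorem~\ref{sAchar} applied to the local ring $R_Q$, $\fe_c(Q) \le e$ if and only if $c\,v_Q^q \ne 0$ in $F^e_{R_Q}\bigl(\E_{R_Q}(\kappa_Q)\bigr)$. A \v{C}ech-complex computation, using the right exactness of $F^e_R$ at the top of the \v{C}ech complex, gives natural $R_Q$-isomorphisms
\[
F^e_{R_Q}\bigl(H^d_{(\uy)}(E)_Q\bigr) \;\cong\; H^d_{(\uy)}\bigl(F^e_R(E)\bigr)_Q \;\cong\; H^d_{(\uy)}\bigl(E^{(e)}\bigr)_Q
\]
under which $v_Q^q$ corresponds to $[u^q;\uy^q]$, where $u^q := 1 \otimes u \in E^{(e)}$ is a \emph{fixed} element of the fixed $R$-module $E^{(e)}$, independent of $Q$. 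Now apply Theorem~\ref{main-g}(d) to $\om^{(e)}$ and then Proposition~\ref{ff}(c) to the cyclic submodule $R(cu^q) \subseteq E^{(e)}$: after one more localization both $E^{(e)}$ and $E^{(e)}/R(cu^q)$ are \ff, and hence \Rpp by Proposition~\ref{reRpf}(g). Corollary~\ref{Annihilator}, applied with $R' = R_Q$ and the possibly improper regular sequence $\uy^q$ on $A_Q$ (genuinely regular, since $A_Q$ is regular local), then yields
\[
\ann_{R_Q}(c\,v_Q^q) \;=\; \ann_R(cu^q)\,R_Q + (\uy^q)\,R_Q,
\]
an ideal that is proper in $R_Q$---equivalently, $c\,v_Q^q \ne 0$---if and only if $\ann_R(cu^q) \inc Q$, because $\uy \inc Q$ in our setup.

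To finish, run the same identification at $Q = P$, where $d = 0$, $\uy$ is empty, and $v_P = u$: the hypothesis $\fe_c(P) \le e$ becomes exactly $\ann_R(cu^q) \inc P$. Combined with the equivalence just established, this gives $\fe_c(Q) \le e$ for \emph{every} $Q \in V(P)$, because $\ann_R(cu^q) \inc P \inc Q$ is automatic. Rolling the finitely many localizations coming from Theorems~\ref{etcanon} and \ref{oR}, Theorem~\ref{main-g}(d), Proposition~\ref{ff}(c), and Corollary~\ref{Annihilator} into a single element $g \in R \sm P$ produces the desired neighborhood $D(g) \cap V(P)$. The main obstacle is the careful verification of the commutation $F^e_R\bigl(H^d_{(\uy)}(E)\bigr) \cong H^d_{(\uy)}\bigl(F^e_R(E)\bigr)$ together with the identification $v_Q^q \leftrightarrow [u^q;\uy^q]$ of socle-generator Frobenii, so that the nonvanishing condition of Theorem~\ref{sAchar} is controlled by the annihilator of the single $Q$-independent element $cu^q$; once this is in hand, the rest of the proof collapses to the trivial containment $\ann_R(cu^q) \inc P \inc Q$.
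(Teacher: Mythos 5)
Your proposal is correct and follows essentially the same route as the paper's proof: étale descent via Theorem~\ref{etcanon} to secure a canonical module, Theorem~\ref{oR} to realize injective hulls uniformly as $H^d_{(\uy)}(E)_Q$ with socle generator $[u;\uy]$, the identification of $F^e$ with top local cohomology of $F^e_R(\om)$, the free-filterable/pseudoflat structure feeding into Corollary~\ref{Annihilator}, and the final observation that $\ann_R(cu^q)\inc P\inc Q$ carries the hypothesis at $P$ to every $Q\in V(P)$. The only cosmetic difference is that you cite Theorem~\ref{main-g}(d) where the paper cites Theorem~\ref{main}(f) (applied over $\tR$ in the sense of Remark~\ref{quot}); these give the same conclusion here, and your framing of the criterion $\fe_c(Q)\le e \Longleftrightarrow \ann_R(cu^q)\inc Q$ as an explicit equivalence is a slightly cleaner packaging of the same computation.
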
 
 
\begin{proof} We may localize $\tR$ repeatedly (but only finitely many times) at elements $g \in \tR\sm \tP$, 
 and we shall do this finitely many times in the course of the proof.  Each time, we change notation and continue
 to use $\tR$, $\tP$, $R$, etc. In conquence, we my assume, for example, 
 that $\tR$ is Cohen-Macaulay, that $R$ is \stwo, and that
 $A:= R/P$ is regular. We want to reduce to the case where $\tR_\tP$ has a canonical module (and that  
 we may assume that it has the form $\tom_\tP$, where $\om$ is a global canonical module for $\tR$).  To this
 end, we use Theorem~\ref{etcanon} to replace $\tR$ by a suitable \'etale extension $\vtR$.  
 In doing this, we may need to localize
 at another element of $\tR \sm \tP$. Then $\vtR$ has a global canonical module $\vtom$ (we use this
 notation, but to be clear, at this point we do not have a module $\tom$ over $\tR$ that somehow gives rise to $\vtom$). 
 Let $\vR := \vtR\otimes_\tR R$, which is still \stwo, since this is an \'etale  extension of $R$.
 The purity exponent  $e$ of $c$ in $R_P$ is the same as the purity exponent of the image of $c$ in 
 $\vR_{\vP}$, with notation as in Theorem~\ref{etcanon},
 since pointed \'etale extensions are geometrically regular and we may apply 
 Theorem~\ref{geomreg}. Suppose that we know the theorem for $\vR$, so that $\fe_c(\vQ) \le e$ 
 for all $\vQ$ in an open neighborhood of $\vP$ in $V(\vP)$.
Since $R_Q \to \vR_{\vQ}$ is pointed \'etale and hence faithfully flat, we have 
$\fe_c(Q) \le \fe_c(\vQ)$ by Theorem~\ref{geomreg}(a). It follows from 
Theorem~\ref{etcanon}(7) that $\fe_c(Q) \le e$  
as well for all $Q$ in an open neighborhood of $P$ in $V(P)$. Hence, it suffices to verify the 
Key Lemma after the pointed \'etale  extension from $\tR$ to $\vtR$. 

Therefore, in the remainder of the proof we 
 may assume that $\tR$ has a canonical module $\tom$.  For any prime $\tQ$ of $\tR$ in $V(\fa)$, 
 we write $Q = \tQ/\fa$  and $\kappa_\tQ := \tR_\tQ/\tQ R_\tQ \cong R_Q/QR_Q =: \kappa_Q$. 
After localization at one element of $\tW := \tR\sm \tP$,  we may assume that all of the conclusions 
of Theorem~\ref{oR} hold, and we shall use the notation of that theorem. 
After such a localization, we still denote the rings as $\tR$ and $R$. 
Note that if $\fe_c(P) = 0$ and if there exists $g\in R\sm P$ such that $\fe_c(Q) \le 1$ for all $Q \in D(g) \cap V(P)$, 
then $\fe_c(Q) = 0$ for all $Q \in D(cg) \cap V(P) \neq \emptyset$. So it suffices to assume $e \ge 1$ next.
 
Let $\om = \Ext^k_\tR(R,\, \tom)$, where $k = \dim(\tR_\tP) - \dim(R_P)$, be defined as in Theorem~\ref{oR},
and let $E := H^h_P(\om)$, where $h = \dim(R_P)$, so that $E_P$ is an injective hull for $\kappa_P$ over $R_P$. 
As in Theorem~\ref{oR}, after localization at an element in $W$, 
we have $\ann_EP \cong A$. Write $\ann_EP = Au$, for some $u \in E$. 
The image of $u$ in $E_P$ generates the socle.  Note that in this theorem and its proof, $q = p^e$ is 
fixed, and the hypothesis that $\fe_c(P) \leq e$ tells us that $cu^q \neq 0$ in 
$F_{R_P}(E_P) \cong F^e_R(E)_P$. 
Up to radical, the ideal $PR_P$ can be generated by $h$ many elements in $R_P$. Thus, after localization 
at an element in $W$, we assume that $P$ is the radical of an ideal generated by $h$ many elements in $R$. 
Consequently, we may identify the following naturally isomorphic $R$-modules: 
$$
F^e_R(E) \cong F^e_R\big(H^h_P(\om)\big) \cong H^h_{P^{[q]}}\big(F^e_R(\om)\big) 
\cong H^h_P\big(F^e_R(\om)\big).
$$
Thus, we know that  $0 \neq cu^q \in H^h_P\big(F^e_R(\om)\big)_P$. 
Let $M := H^h_P\big(F^e_R(\om)\big)$. Here, we think of $F^e_R(\om)$ as simply a fixed, 
finitely generated $R$-module.  
By Theorem~\ref{main}(f) and Proposition~\ref{ff}(c), after localization at one element of $W$,  
we have that $M$ and $M/R(cu^q)$ are \ff, so that we may apply Corollary~\ref{Annihilator}   
to $R'\big(1_{R'} \otimes_R (cu^q)\big) \inc R' \otimes_RM$, with any choice of $R'$ that
is flat over $R$.  

Now fix an arbitrary $\tQ \in V(\tP)$ and let $\uy :=\vect y d \in \tR$  map to a regular \sop in the regular 
local ring $A_\tQ$.  By Theorem~\ref{oR}, we have 
$\E_{R_Q}(\kappa_Q) \cong H^d_{(\uy)}\big(H^h_{P}(\om)\big)_Q$ over $R_Q$ and, 
with notation as in \ref{lcnot}, we may take $v :=[u; \uy]$ as a socle generator of 
$H^d_{(\uy)}\big(H^h_{P}(\om)\big)_Q$. 

We may identify $F^e_{R_Q}\big(\E_{R_Q}(\kappa_Q)\big) \cong 
F^e_{R_Q}\Big(H^d_{(\uy)}\big(H^h_{P}(\om)\big)_Q\Big) \cong
H^d_{(\uy^q)}\big(F_R^e(E)\big)_Q$
and write $v^q = [u; \uy]^q = [u^q;\uy^q] \in H^d_{(\uy^q)}\big(F_R^e(E)\big)_Q 
\cong F^e_{R_Q}\big(\E_{R_Q}(\kappa_Q)\big)$.
Then $cv^q = [cu^q; \uy^q] \in H^d_{(\uy^q)}\big(F_R^e(E)\big)_Q$, where $cu^q \in F^e_R(E) = M$.  We apply 
Corollary~\ref{Annihilator} with $R' = R_Q$, and with $\uy^q$ replacing $\uy$ as in Remark~\ref{annrmk}, to 
show that the annihilator 
of $[cu^q; \uy^q]$ in $R_Q$ is $\big(\Ann_R (cu^q)\big)R_Q + (\uy^q)R_Q$.   Since $cu^q \in F^e_R(E)$ 
does not become $0$ after localization at $P$, we see that $\Ann_R(cu^q) \inc P$. Hence, we conclude that 
\begin{align*}
\ann_{R_Q}(cv^q) = \ann_{R_Q}([cu^q; \uy^q]) &= \big(\Ann_R (cu^q)\big)R_Q + (\uy^q)R_Q\\
&\subseteq P R_Q+ (\uy^q) R_Q = \big(P + (\uy^q) \big)R_Q \inc QR_Q.
\end{align*}
This means that $0 \neq cv^q \in H^d_{(\uy^q)}\big(F_R^e(E)\big)_Q \cong 
F^e_{R_Q}\big(\E_{R_Q}(\kappa_Q)\big)$.  
By Theorem~\ref{sAchar} applied to $R_Q$, we see 
that the purity exponent for the image of $c$ in $R_Q$ is at most $e$.
In summary, there exists $g \in R\sm P$ such that, for all $Q \in D(g) \cap V(P)$, we have $\fe_c(Q) \le e$. 
\end{proof}

\begin{remark}  The reason for assuming that $R$ is \stwo 
in the Key Lemma~\ref{mainloclemma} is that in this case, for every $P \in \spec(R)$ there 
exists a finitely generated $R$-module $\om$ such that the injective hull of the
residue field for $R_Q$ can be realized as the top local cohomology module of $\om_Q$ 
over $R_Q$, for all prime ideals $Q$ in an open neighborhood of $P$ in $V(P)$.  Then, 
when we apply the Frobenius functor $F^e$, we still have the top local
cohomology of a finitely generated module.  The proof works in a similar fashion whenever 
for every $P \in \spec(R)$ there exists a finitely generated $R$-module $\Psi$ such that the injective
hull of $\kappa_Q$ can be realized as the localization at $Q$ of a top local cohomology with support
in $Q$ of $\Psi$, for all prime ideals $Q$ in an open neighborhood of $P$ in $V(P)$. 
The module $\Psi$ then plays the role of $\omega$ in the argument.
The proof also depends on the fact that, after replacing $R$ with its localization at one element 
of $R \sm P$, the top local cohomology $H^{\dim(R_P)}_P\big(F^e_R(\om)\big)$ is generically \ff. 
We do not know  how to prove the needed facts about being generically \ff unless we can realize the 
injective hulls that arise as localizations of cohomology modules.
\end{remark}

Next, note the following consequences of Theorem~\ref{mainsmct}:
 
\begin{corollary}\label{pure-locus} Let $R$ be a Noetherian ring of prime characteristic $p >0$ such that $R$ 
is \stwo and is a homomorphic image of \CM excellent ring.  Then the F-pure locus is open in $R$. \end{corollary}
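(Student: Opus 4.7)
The plan is to derive this directly from the semicontinuity statement in Theorem~\ref{mainsmct}, applied to the distinguished element $c = 1$. First I would identify the F-pure locus of $R$ with a sublevel set of the purity-exponent function $\fe_1 : \Spec(R) \to \N_+ \cup \{\infty\}$. By the definition recorded in \S\ref{pure-e}, a local ring $R_P$ is F-pure precisely when $\theta_{1,1}^{R_P}$ is pure, which is in turn equivalent to $\fe_1(P) = 1$, and since $\fe_1(P) \ge 1$ whenever it is defined, this is the same as $\fe_1(P) \le 1$. Thus the F-pure locus of $R$ coincides with the set
\[
\{P \in \Spec(R) : \fe_1(P) \le 1\}.
\]

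Since $R$ is an S$_2$ homomorphic image of an excellent Cohen-Macaulay Noetherian ring of prime characteristic $p > 0$, and since $1 \in R$, the hypotheses of Theorem~\ref{mainsmct} are satisfied for $c = 1$ and $e = 1$. Applying that theorem (which asserts the upper semicontinuity of $\fe_c$ on $\Spec(R)$) gives at once that the set displayed above is Zariski open, which is the desired conclusion.

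There is really no obstacle here beyond packaging: all of the substantive work is buried in Theorem~\ref{mainsmct} and ultimately in the Key Lemma~\ref{mainloclemma}, which in turn rests on the generic local duality machine from \S\ref{genld} together with the injective-hull descriptions in \S\ref{hull} and the \'etale reduction in \S\ref{canon}. Accordingly, the proof proposal reduces to two sentences: observe that F-purity of $R_P$ is exactly the condition $\fe_1(P) \le 1$, and then quote Theorem~\ref{mainsmct} with $c = 1$ and $e = 1$.
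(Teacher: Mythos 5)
Your proposal is correct and is essentially identical to the paper's own proof: take $c = 1$, note that the F-pure locus is $\{P : \fe_1(P) \le 1\}$, and invoke Theorem~\ref{mainsmct}.
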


\begin{proof}  Take $c = 1_R \in R$, $e = 0 \in \N$, and apply Theorem~\ref{mainsmct}.
\end{proof}

\begin{theorem}\label{strong-locus} Let $R$ be an excellent Noetherian ring of prime characteristic $p >0$. 
Then the strongly F-regular locus  in $R$ is a Zariski open set. \end{theorem} 

\begin{proof} Let $P \in \Spec(R)$ be such that $R_P$ is strongly F-regular. It suffices to show
that every such $P$ has a Zariski open neighborhood in $\Spec(R)$ that is contained in the strongly F-regular locus.
We localize several times  at elements of $R \sm P$ while retaining the notation $R$ for the ring. 
Since the \CM locus is open in an excellent ring, we may localize at an element in $R \sm P$ and assume that $R$ 
is Cohen-Macaulay (hence $R$ is \stwo). By Corollary~\ref{pure-locus}, 
we may also assume that the ring $R$ is F-pure, hence reduced, after localizing at an element in $R \sm P$.   
By \cite[Thm.~4.16, p.~5479]{Sharp10}, if $R$ is excellent 
and F-pure and $c \in R\0$ is  such that $R_c$ is regular, then $c$ is a big test element.  
Choose such an element $c \in R\0$, which exists since $R$ is excellent and reduced.  
Since $R_P$ is strongly F-regular, the purity exponent $e := \fe_c(P)$ of $c$ in $R_P$ is finite. 
By Theorem~\ref{mainsmct}, we may replace $R$ by a localization at an element in $R\sm P$ such that 
the purity exponent of $c$ in $R_Q$ is bounded above by $e$ for all $Q \in \Spec(R)$. 
Therefore, by Proposition~\ref{purexploc}(b), we have that $\fe_c = \sup_{Q \in \Spec(R)}\fe_c(Q) = e$.
Let $M$ be any $R$-module, not necessarily \fg. It will suffice to show that $0^*_M = 0$ in order to show that $R$ is strongly F-regular (cf. the first paragraph of subsection~\ref{vstfreg}). 
Now, because $c$ is a big test element, for any $u \in 0^*_M$, we have $0 = cu^{[p^e]}_M \in F^e_R(M)$, 
which implies that $u = 0$ by Proposition~\ref{membership}. 
\end{proof}

\begin{remark}\label{Lyu-Ch8}
We note that using the results of an earlier version of this paper, S.~Lyu \cite{Lyu25} has proved
that Theorem~\ref{mainsmct}, Corollary~\ref{pure-locus} and Theorem~\ref{strong-locus} 
hold under substantially weaker hypotheses. For the first two results one only needs that the condition J$_0$ holds for
$R/P$ for all $P \in \Spec(R)$, i.e., that the regular locus in every $\Spec(R/P)$ contains a nonempty open subset (see \cite[Thm.~A.2.4]{Lyu25}).
For Theorem~\ref{strong-locus}, one needs in addition that for all $P \in \Spec(R)$ the fibers of the
map $R_P \to \wh{R_P}$ are regular (see \cite[Thm.~A.2.10]{Lyu25}).
\end{remark}

Finally, we record the following:

\begin{theorem}\label{maine-gen} Let $R$ be a Noetherian ring of prime characteristic $p >0$ such that for all $P \in \spec(R)$, the regular locus of $R/P$ contains a nonempty open subset of $\Spec(R/P)$. If $R$ is strongly F-regular, then it is very strongly F-regular. 
\end{theorem}

\begin{proof}
The proof of Theorem~\ref{maine} works verbatim, except that \cite[Thm.~A.2.4]{Lyu25} is quoted instead of Theorem~\ref{mainsmct}.
\end{proof}

\end{document}